\tikzset{negated/.style={
        decoration={markings,
            mark= at position 0.5 with {
                \node[transform shape] (tempnode) {$\backslash$};
            }
        },
        postaction={decorate}
    }
}
\tikzset{
    vertex/.style={draw,circle,thick, inner sep=1.5pt,minimum size=6pt,
    },
    edge/.style={thick},
    dedge/.style = {->,> = latex',thick}
}
\definecolor{asparagus}{rgb}{0.53, 0.66, 0.42}
\Crefname{equation}{Eq.}{Eqs.}
\newtheorem{theo}{Theorem}[section]
\newtheorem{theorem}[theo]{Theorem}
\newtheorem{lemma}[theo]{Lemma}
\newtheorem{proposition}[theo]{Proposition}
\newtheorem{definition}[theo]{Definition}
\theoremstyle{definition}
\newtheorem{remark}[theo]{Remark}
\newtheorem{example}[theo]{Example}
\numberwithin{equation}{section}
\begin{document}
\keywords{Gain graph, Group representation, Godsil-McKay switching, $G$-cospectrality, $\pi$-cospectrality, Quaternionic matrix.}

\title{Godsil-McKay switchings for gain graphs}

\author[M. Cavaleri]{Matteo Cavaleri}
\address{Matteo Cavaleri, Universit\`{a} degli Studi Niccol\`{o} Cusano - Via Don Carlo Gnocchi, 3 00166 Roma, Italia}
\email{matteo.cavaleri@unicusano.it   \textrm{(Corresponding Author)}}

\author[A. Donno]{Alfredo Donno}
\address{Alfredo Donno, Universit\`{a} degli Studi Niccol\`{o} Cusano - Via Don Carlo Gnocchi, 3 00166 Roma, Italia}
\email{alfredo.donno@unicusano.it}

\author[S. Spessato]{Stefano Spessato}
\address{Stefano Spessato, Universit\`{a} degli Studi Niccol\`{o} Cusano - Via Don Carlo Gnocchi, 3 00166 Roma, Italia}
\email{stefano.spessato@unicusano.it}

\begin{abstract}
We introduce a switching operation, inspired by the Godsil-McKay switching, in order to obtain pairs of $G$-cospectral gain graphs, that are gain graphs cospectral with respect to every representation of the gain group $G$. For instance, for two signed graphs, this notion of cospectrality is equivalent to the cospectrality of their signed adjacency matrices together with the cospectrality of their underlying graphs. Moreover, we introduce another more flexible switching in order to obtain pairs of gain graphs cospectral with respect to some fixed unitary representation. Many existing notions of spectrum for  graphs and gain graphs are indeed special cases of these spectra associated with particular representations, therefore our construction recovers the classical Godsil-McKay switching and the Godsil-McKay switching for signed and complex unit gain graphs. As in the classical case, not all gain graphs are suitable for these switchings: we analyze the relationships between the properties that make the graph suitable for the one or the other switching. Finally we apply our construction in order to define a Godsil-McKay switching for the right spectrum of  quaternion unit gain graphs.

\end{abstract}

\maketitle

\begin{center}
{\footnotesize{\bf Mathematics Subject Classification (2020)}: 05C22, 05C25, 05C50, 05C76, 20C15.}
\end{center}

\section{Introduction}
Spectral graph theory is the investigation of graphs via linear algebra. It mainly consists in studying invariants such as the spectrum, the spectral radius, the index, and many others, of matrices representing  graphs, such as the \emph{adjacency matrix}, the \emph{Laplacian matrix}, and other generalizations.  Many geometric and combinatorial properties of a graph can be deduced from these spectral invariants: the theory is rich and well established  \cite{cve} and it is omnipresent in various fields, from the most theoretical to the most applied ones \cite{cveapp}.
\\ \indent For example the first time anyone wondered about the existence of \emph{pairs of nonisomorphic cospectral graphs}, it was in the world of chemistry  \cite{gp} in the 1950s. In fact, at that time, the question was whether graphs were \emph{determined by their spectrum} (see, for instance,  \cite{deter,deter2} for more historic details and further developments). Cospectral graphs have attracted a lot of attention for many reasons. In particular, they are a useful tool to outline the limits of the information that can be drawn from the spectrum and  therefore the limits of spectral graph theory itself. A simple but emblematic example of a pair of cospectral graphs is the one depicted in Fig.~\ref{fig:-1}, firstly given in \cite{cve0}  and called \emph{Saltire pair} in \cite{deter}, due to the similarity to the Scottish flag.
One of the two graphs is connected and one is not: therefore connectivity, at least in general, cannot be deduced from the adjacency spectrum.
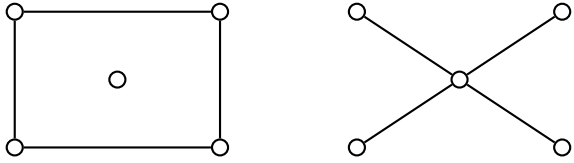
\begin{figure}
\centering
\begin{tikzpicture}[scale=0.9]


\node[vertex]  (1) at (0,0){};

\node[vertex]  (2) at (3,0){};

\node[vertex]  (3) at (3,2){};
\node[vertex]  (4) at (0,2){};
\node[vertex]  (5) at (1.5,1){};


\draw[edge] (1) -- (2);
\draw[edge] (2) -- (3);
\draw[edge] (3) -- (4);
\draw[edge] (1) -- (4);


\node[vertex]  (1) at (0+5,0){};

\node[vertex]  (2) at (3+5,0){};

\node[vertex]  (3) at (3+5,2){};
\node[vertex]  (4) at (0+5,2){};
\node[vertex]  (5) at (1.5+5,1){};


\draw[edge] (1) -- (5);
\draw[edge] (2) -- (5);
\draw[edge] (3) -- (5);
\draw[edge] (4) -- (5);

\end{tikzpicture}
\caption{Two cospectral graphs.}\label{fig:-1}
\end{figure}
\\\indent But the real leap from an anecdotal research to a systematic study in the research of cospectral pairs happened when also a quantitative, probabilistic investigation of the spectral determination began. A remarkable achievement in this direction was the result that \emph{almost all} trees are cospectral \cite{tree}.
Many methods, operations, routines have been proposed to obtain cospectrality: one of the most refined results, allowing the consolidation of various known techniques into a single construction, is the so-called \emph{Godsil-McKay switching} \cite{GoMc}.

The ubiquity of graph theory is due to the fact that a graph is a model for a system of interactions. When these interactions may be positive or negative, then the related natural object is instead a \emph{signed graph}, i.e., a graph whose edges may be positive or negative. Formally, a signed graph is a pair $(\Gamma,\sigma)$ where $\Gamma$ is the \emph{underlying graph} and $\sigma$ is the \emph{signature}, that is a map assigning a sign to each adjacency.  In order to model interactions that are even more varied, there exists a further generalization, given by the notion of \emph{gain graph}. This  is a pair $(\Gamma,\psi)$ where still $\Gamma$ is the underlying graph and $\psi$ is the \emph{gain function}, assigning to every ordered pair of adjacent vertices $u,v$ of $\Gamma$ an element $\psi(u,v)$, called \emph{gain}, from a group $G$, in such a way that  $\psi(v,u)=\psi(u,v)^{-1}$.
Signed graphs are then particular gain graphs with $G = \mathbb{T}_2=\{\pm 1\}$. As graphs are studied up to isomorphism, signed and gain graphs are studied up to \emph{switching isomorphism} \cite{zasgraph, zasign, zaslavsky1}. When the  product of the gains (signs) along any closed walk is the unit element of $G$, the gain (signed) graph is said to be \emph{balanced} \cite{Harary}. Equivalently, a gain graph is balanced if it is switching isomorphic to its underlying graph endowed with the trivial gain function. For a regularly
updated bibliography  on signed and gain graphs we refer to \cite{zasbib}.
\\ \indent Spectral graph theory was generalized in a very natural way to signed graphs, which  have $\{0,\pm 1\}$-valued adjacency matrices \cite{zasmat}.
The  spectrum of a signed graph is even able to capture  characteristics  that are specific of signed graphs, such as balance \cite{acharya}. Moreover, a signed version of the Godsil-McKay switching  has been proposed in \cite{cos}.
\\ \indent The generalization of the spectral theory to gain graphs is less immediate and univocal. Many particular cases have been considered in literature: first of all, the spectral theory of  \emph{complex unit gain graphs} \cite{reff1}, that are gain graphs whose gain group is $\mathbb T=\{z\in\mathbb C: |z|=1\}$. Clearly, also in this case,
complex matrices naturally arise and a spectral characterization of balance was proved  \cite{adun}, as well as an analogue of the Godsil-McKay switching \cite{gm}. Also the spectral theory for
 \emph{quaternion unit gain graphs} was recently investigated  \cite{quat}, thanks to the existence of several notions of spectrum for quaternion valued matrices \cite{brenner,Lee,Spec,zhang}. In  \cite{quat} many of the classical results for signed and complex unit gain graphs are generalized to quaternion unit gain graphs. One of the byproducts of the current paper is a generalization of the Godsil-McKay switching to this class of gain graphs. But our approach is definitely more general, and this is just an example of its possible applications.
 \\ \indent For general gain graphs, adjacency matrices are not complex valued but group algebra valued matrices and then  the notion of spectrum is not unique.
Anyway, complex matrices naturally arise from  the underlying graph or from the cover graphs. In \cite{JACO} a unifying approach was introduced thanks to the use of the theory of representations.
 Given a gain graph $(\Gamma,\psi)$ on a group $G$ together with a unitary representation $\pi$ of $G$, there exists a complex adjacency matrix of $(\Gamma,\psi)$ associated with $\pi$, obtained by replacing each occurrence of an element $g$ with the block  $\pi(g)$. The spectrum of the resulting matrix is called the \emph{$\pi$-spectrum} of the gain graph. With this definition, the analogous spectral characterization of balance was proved in \cite{JACO}.
But  in \cite{oncospe} further investigations on $\pi$-spectra and \emph{$\pi$-cospectrality}  highlighted the strong dependence on the choice of the representation $\pi$,
and the need for a more canonical notion of cospectrality, namely the \emph{$G$-cospectrality}, which for finite groups is equivalent to cospectrality with respect to all representations.

The main goal of the present paper is the introduction of routines, inspired by the Godsil-McKay switching \cite{GoMc} and its generalizations \cite{cos,gm}, in order to obtain pairs of $G$-cospectral gain graphs and/or pairs of $\pi$-cospectral gain graphs for some unitary representation $\pi$ of the group $G$, as well as an analysis of the relationships between the properties that make a graph suitable for the one or the other switching. \\
\indent The already known Godsil-McKay switchings can be obtained as particular cases of this more general construction.
Our construction significantly extends the scope, and we show this by providing nontrivial examples in the context of gain graphs that, if considered in the classical setting, would have been trivial or not suitable to that approach. In other words, their nontriviality is ensured precisely by the novelty of the construction.
\\ \indent The Section \ref{sec:2} contains an introduction to group representations, a summary on gain graphs and  switching isomorphisms, and a summary on the different notions of cospectrality for general gain graphs and their relations.
\\ \indent In Section \ref{sec:G} the notion of $G$-GM partition is introduced together with its associated switching; in Theorem \ref{G-Gods} it is proved that this switching produces pairs of $G$-cospectral gain graphs.
 \\ \indent In the first part of Section \ref{sec:3} the notion of $\pi$-GM partition, where $\pi$ is a unitary representation, is introduced, together with its associated switching; in Theorem \ref{lemma-Q-pi} it is proved that this switching produces pairs of $\pi$-cospectral gain graphs. Then in Theorem \ref{thm:Marcovaldo} it is proved that a partition is $G$-GM if and only if it is $\pi$-GM with respect to all unitary representations of $G$ or, equivalently, if it is $\lambda_G$-GM, where $\lambda_G$ is the left regular representation. In the second part of Section \ref{sec:3} another switching is considered: the assumptions required to be a $\pi$-GM partition are somehow weakened, as long as there exists an element of the group whose image via $\pi$ is $-I$ (when the representation is faithful, this is equivalent to the existence of a \emph{central involution}).
  \\ \indent Finally in Section \ref{sectionquaternions}, after giving a brief introduction on quaternions and quaternionic matrices, we describe a Godsil-McKay switching for  quaternion unit gain graphs and in Theorem \ref{teo:qfinale} we prove, by using the theory of the previous section,  that this switching actually produces pairs of right cospectral quaternion unit gain graphs.

\section{Preliminaries}\label{sec:2}

\subsection{Group representations and group algebras}
Let $G$ be a group with unit element $1_G$. Let $M_k(\mathbb{C})$ be the set of all square matrices of size $k$ with entries in $\mathbb C$
and let $GL_k(\mathbb{C})$ be the group of all invertible matrices in $M_k(\mathbb{C})$. A representation $\pi$ of \emph{degree $k$} (we write $\deg \pi=k$) of $G$  is a group homomorphism $\pi\colon G\to GL_k(\mathbb{C})$.\\
\indent  Let $U_k(\mathbb{C}) = \{M\in GL_k(\mathbb{C}) : M^{-1}=M^\ast\}$ be the subgroup of $GL_k(\mathbb{C})$ consisting of unitary matrices, where $M^*$ is the Hermitian transpose of $M$. The representation $\pi$ is said to be \emph{unitary} if $\pi(g)\in U_k(\mathbb{C})$, for every $g\in G$. Two representations $\pi$ and $\pi'$ of degree $k$ of $G$ are said to be equivalent, and we write $\pi\sim\pi'$, if there exists a matrix $S\in GL_k(\mathbb{C})$ such that $\pi'(g)=S^{-1}\pi(g)S$ for every $g\in G$. This is an equivalence relation and it is known that, for a finite group $G$, each equivalence class contains a unitary representative: this is the reason why working with unitary representations is not restrictive in the finite case. \\
\indent We will denote by $I_k$ (or just $I$ when the size is clear from the context) the identity matrix of size $k$ in $M_k(\mathbb{C})$.
\\\indent The \emph{kernel} of a representation $\pi$ is $\ker \pi=\{g\in G:\, \pi(g)=I\}$, and a representation is \emph{faithful} if $\ker \pi=\{1_G\}$.
\\
\indent
Given two representations $\pi_1$ and $\pi_2$ of $G$, the direct sum representation $\pi_1 \oplus \pi_2$ of $G$ is defined by
$$
(\pi_1\oplus \pi_2)(g) = \left(
                                   \begin{array}{c|c}
                                     \pi_1(g) & 0 \\ \hline
                                     0 & \pi_2(g) \\
                                   \end{array}
                                 \right) \qquad \forall \ g\in G.
$$
We will use the notation $\pi^{\oplus i}$ for the $i$-th iterated direct sum of a representation $\pi$ with itself. \\
\indent A representation $\pi$ of $G$ is said to be \emph{irreducible} if there is no nontrivial invariant subspace for the action of $G$. It is proven that $\pi$ is irreducible if and only if it is not equivalent to any direct sum of representations. It is well known that if $G$ has $m$ conjugacy classes, then there exist $m$ irreducible, pairwise  inequivalent, representations $\pi_0,\ldots,\pi_{m-1}$, which constitute a \emph{complete system of irreducible representations of $G$}, such that for each representation $\pi$ of $G$, there exist $k_0,\ldots,k_{m-1} \in \mathbb N \cup \{0\}$ such that
$\pi\sim \bigoplus_{i=0}^{m-1} \pi_i^{\oplus k_i}$.
One says that $\pi$ contains the irreducible representation $\pi_i$ with multiplicity $k_i$  if  $k_i\neq 0$, and that $\pi_i$ is a subrepresentation of $\pi$.

\indent Now we recall some remarkable representations of a group $G$. The first one is the \emph{trivial representation} $\pi_0\colon G\to \mathbb C$, such that $\pi_0(g)=1$ for every $g\in G$. It is unitary and irreducible and it  always occurs in a complete system of irreducible representations of $G$. \\
We also have the \emph{identical representation} $\pi_{id}: G \rightarrow GL_1(\mathbb{C})$, defined on each subgroup $G$ of the unit circle $\mathbb{T}=\{z\in \mathbb{C} : |z|=1\}$ as $\pi_{id}(z)=z$.\\
\indent A group $G$  naturally acts by left multiplication on itself. This action can be identified with the action of $G$ on the vector space
$\mathbb{C}G=\{\sum_{x\in G} c_x x: \ c_x\in \mathbb{C}\}$. When $G$ is finite, one gets the \emph{left regular representation} $\lambda_G$, which is faithful and has degree $|G|$. It is known that, if $\pi_0,\ldots,\pi_{m-1}$ is a complete system of irreducible representations, then $\lambda_G$ decomposes (up to equivalence) as:
$$
\lambda_G \sim \bigoplus_{i=0}^{m-1} \pi_i^{\oplus \deg \pi_i}.
$$
For further information and a more general discussion on group representation theory, we refer to \cite{fulton}.

Even when $G$ is not finite, the space of all finite $\mathbb C$-linear combinations of elements of $G$ is a vector space, denoted by $\mathbb{C}G$.
Endowed with the product
$$
\left(\sum_{x\in G} f_x x\right) \left(\sum_{y\in G} h_y y\right):= \sum_{x,y\in G} f_x h_y \, x y, \qquad \mbox{ for each } f=\sum_{x\in G} f_x x,\;h=\sum_{y\in G} h_y y\in \mathbb CG,
$$
and with the involution
$$
f^*:= \sum_{x\in G} \overline{f_{x^{-1}} }x,
$$
it is called the \emph{group algebra} of $G$.

A representation $\pi$ of degree $k$ of $G$ can be naturally extended by linearity to $\mathbb{C}G$. With a little abuse of notation, we denote with $\pi$ this extension, that is in fact a homomorphism of algebras:
\begin{equation}\label{star1}
\begin{split}
 \pi\colon \mathbb{C}G&\to M_k(\mathbb C)\\
\sum_{x\in G} f_x x&\mapsto \sum_{x\in G} f_x \pi(x).
\end{split}
\end{equation}
Moreover, if $\pi$ is unitary, then $\pi(f^*)=\pi(f)^*$ for any $f\in\mathbb CG$.

We denote by $M_{n,m}(\mathbb{C}G)$ the set of the group algebra valued $n\times m$ matrices. The product of $F\in M_{n,m}(\mathbb{C}G)$ and  $H\in M_{m,t}(\mathbb{C}G)$ is  the matrix $FH\in M_{n,t}(\mathbb{C}G)$ such that $(F H)_{i,j}=\sum_{k=1}^m F_{i,k} H_{k,j}$, where $F_{i,k} H_{k,j}$ is the product in $\mathbb C G$.
When $n=m$, we use the notation $M_n(\mathbb{C}G)$. Actually $M_n(\mathbb{C}G)$ is an algebra with  the involution $^*$ defined by $(F^*)_{i,j}=(F_{j,i})^*$, where the $^*$ on the right is the involution in $\mathbb C G$. The identity matrix $I_{M_n(\mathbb C G)}$ is defined as the diagonal matrix whose diagonal entries are equal to $1_G$, that is
$$
I_{M_n(\mathbb C G)}   = \left(
                           \begin{array}{ccc}
                             1_G &  &  \\
                              & \ddots  &  \\
                             &  & 1_G \\
                           \end{array}
                         \right) = diag(1_G,\ldots,1_G).
$$
Notice that $I_{M_n(\mathbb C G)}A = AI_{M_n(\mathbb C G)}=A$, for each $A\in M_n(\mathbb C G)$. Moreover, given $g\in G$ and $A\in M_n(\mathbb C G)$, we put
$$
gA=diag(g,g,\ldots,g)A \qquad A g = A  diag(g,g,\ldots,g).
$$
In words, the matrix $gA$ (resp. $Ag$) is obtained from $A$ by multiplying each entry on the left (resp. on the right) by the element $g\in G$. Also observe that $gA=Ag$ when $G$ is an Abelian group.

A representation $\pi: G \rightarrow GL_k(\mathbb C)$ can be further extended to $M_{n,m}(\mathbb{C}G)$. We still use the notation
\begin{eqnarray}\label{star2}
\pi\colon M_{n,m}(\mathbb{C}G)\to M_{nk,mk}(\mathbb C).
\end{eqnarray}
For $A\in M_{n,m}(\mathbb{C}G)$, the element $\pi(A)\in M_{nk,mk}(\mathbb C)$ is called the \emph{Fourier transform of $A$ at $\pi$}.
The matrix $\pi(A)$ is  obtained from $A$ by replacing each entry $a_{i,j}\in \mathbb{C}G$ with the block $\pi(a_{i,j})\in M_k(\mathbb{C})$, where $\pi$ is the map defined in Eq.\ \eqref{star1} (see  \cite[Lemma 5.1]{GLine} for the properties of this extension).

In particular this extension  $\pi\colon M_n(\mathbb{C}G)\to M_{nk}(\mathbb C)$ for square matrices  is a homomorphism of algebras (see, e.g., \cite{JACO}).
Moreover, equivalence relations and subrepresentations are also preserved in their extensions to $M_n(\mathbb{C}G)$ as the next proposition claims.

\begin{proposition}\label{productfou}{\cite[Proposition 3.4]{JACO}}
Let $A \in M_n(\mathbb C G)$ and let $\pi,\pi'$ be two representations of $G$. Then:
\begin{itemize}
\item if $\pi \sim \pi'$ then the matrices $\pi(A)$ and $\pi'(A)$ are similar;
\item the matrices $(\pi\oplus\pi')(A)$ and $\pi(A)\oplus \pi'(A)$ are similar;
\item if $\pi$ is unitary, then $\pi(A^*)=\pi(A)^*$.
\end{itemize}
\end{proposition}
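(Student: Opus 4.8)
The plan is to treat the three statements separately, in each case working block-wise and exploiting the single structural fact that the extended map $\pi\colon M_n(\mathbb{C}G)\to M_{nk}(\mathbb{C})$ replaces every entry $a_{i,j}\in\mathbb{C}G$ by the $k\times k$ block $\pi(a_{i,j})$, together with the scalar-level properties already recorded around Eq.~\eqref{star1}: that $\pi\colon\mathbb{C}G\to M_k(\mathbb{C})$ is linear and an algebra homomorphism, and that $\pi(f^*)=\pi(f)^*$ whenever $\pi$ is unitary. The first and third items are then straightforward block computations; the second is where the bookkeeping is heaviest and is the step I expect to be the main obstacle.

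For the first item, suppose $\pi\sim\pi'$ with $\pi'(g)=S^{-1}\pi(g)S$ for all $g\in G$ and some $S\in GL_k(\mathbb{C})$. By linearity this extends to $\pi'(f)=S^{-1}\pi(f)S$ for every $f\in\mathbb{C}G$. I would form the block-diagonal matrix $\widetilde{S}=\operatorname{diag}(S,\ldots,S)\in GL_{nk}(\mathbb{C})$ ($n$ copies of $S$) and check block-by-block that $\widetilde{S}^{-1}\pi(A)\widetilde{S}=\pi'(A)$: the $(i,j)$ block of the left-hand side is $S^{-1}\pi(a_{i,j})S=\pi'(a_{i,j})$, which is exactly the $(i,j)$ block of $\pi'(A)$. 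Hence $\pi(A)$ and $\pi'(A)$ are similar. For the third item I would likewise compare both sides block-wise: writing $(A^*)_{i,j}=(A_{j,i})^*$, the $(i,j)$ block of $\pi(A^*)$ is $\pi\big((A_{j,i})^*\big)=\pi(A_{j,i})^*$ by the scalar unitary identity, while the $(i,j)$ block of $\pi(A)^*$ is the Hermitian transpose of the $(j,i)$ block of $\pi(A)$, namely $\pi(A_{j,i})^*$. These agree for all $i,j$, giving $\pi(A^*)=\pi(A)^*$.

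The second item requires more care. The matrix $(\pi\oplus\pi')(A)$ replaces each entry $a_{i,j}$ by the block $\operatorname{diag}\big(\pi(a_{i,j}),\pi'(a_{i,j})\big)$, so its data, grouped by the two representations, are exactly those of $\pi(A)$ and $\pi'(A)$ but \emph{interleaved}; whereas $\pi(A)\oplus\pi'(A)$ collects all the $\pi$-information in one summand and all the $\pi'$-information in the other. The two matrices therefore differ only by a reordering of the $n(k+k')$ coordinates, with $k=\deg\pi$ and $k'=\deg\pi'$. I would make this precise by exhibiting the permutation matrix $P$ (the ``perfect shuffle,'' depending only on $n$, $k$, $k'$, not on $A$) sending the coordinate indexed by $(i,p)$ with $p$ in the $\pi$-part to the corresponding coordinate of the first summand, and $(i,p')$ with $p'$ in the $\pi'$-part to the second summand. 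A convenient way to organize the verification is to write $\pi(A)=\sum_g A_g\otimes\pi(g)$, where $A=\sum_g A_g\,g$ with $A_g\in M_n(\mathbb{C})$, so that $(\pi\oplus\pi')(A)=\sum_g A_g\otimes\big(\pi(g)\oplus\pi'(g)\big)$ and $\pi(A)\oplus\pi'(A)=\big(\sum_g A_g\otimes\pi(g)\big)\oplus\big(\sum_g A_g\otimes\pi'(g)\big)$.

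The key point, and the one needing genuine care, is that a \emph{single} permutation conjugates $A_g\otimes\big(\pi(g)\oplus\pi'(g)\big)$ into $\big(A_g\otimes\pi(g)\big)\oplus\big(A_g\otimes\pi'(g)\big)$ simultaneously for every $g$; this holds precisely because the shuffle depends only on the block sizes and not on the matrix entries. Summing over $g$ then yields $P^{-1}(\pi\oplus\pi')(A)P=\pi(A)\oplus\pi'(A)$, establishing the claimed similarity and completing the proof.
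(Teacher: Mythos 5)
Your proof is correct. Note that the paper does not actually prove this proposition: it is imported verbatim from \cite{JACO} (Proposition 3.4 there), so there is no internal proof to compare against; your argument is the natural self-contained one. All three steps check out: the block-diagonal conjugation by $\widetilde{S}=\operatorname{diag}(S,\ldots,S)$ for the first item, the entrywise identity $\pi\bigl((A_{j,i})^*\bigr)=\pi(A_{j,i})^*$ for the third, and, for the second, the decomposition $\pi(A)=\sum_g A_g\otimes\pi(g)$ together with the fact that the shuffle permutation realizing $\mathbb{C}^n\otimes(\mathbb{C}^k\oplus\mathbb{C}^{k'})\cong(\mathbb{C}^n\otimes\mathbb{C}^k)\oplus(\mathbb{C}^n\otimes\mathbb{C}^{k'})$ depends only on $n,k,k'$, so a single conjugation works simultaneously for all $g$ and passes through the (finite) sum. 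It is worth noting that this shuffling device is exactly the technique the paper itself deploys later, in the proof of Theorem \ref{teo:qfinale}, where the matrices $R_\tau$ and $C_\tau$ play the role of your permutation $P$; so your route is fully consonant with the paper's methods.
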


Observe that the homomorphisms in Eqs. \eqref{star1} and \eqref{star2} may be not injective, even when the representation $\pi$ is faithful. To see that, let $G=\{1_G,g\}$ be the cyclic group of two elements, and consider the faithful representation $\pi: G \rightarrow GL_1(\mathbb{C})$ such that $\pi(1_G)=1$ and $\pi(g)=-1$. In particular, the nonzero element $1_G+g$ of $\mathbb{C}G$ satisfies $\pi(1_G+g) = 0$. However, this is not the case if the representation $\pi$ is (or contains) the regular representation, as the following proposition shows.

\begin{proposition}\label{prop:regolareiniettiva}
Let $G$ be a group and let $\lambda_G$ be its left regular representation. Then the map
$$
\lambda_G\colon \mathbb C G \to M_{|G|}(\mathbb C)
$$
of Eq. \eqref{star1} and the map
$$
\lambda_G\colon M_n(\mathbb{C}G)\to M_{n|G|}(\mathbb C)
$$
of Eq. \eqref{star2} are injective.
\end{proposition}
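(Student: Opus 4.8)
The plan is to exploit the fact that, once extended to the group algebra, the left regular representation is nothing but the operator of \emph{left multiplication}. Recall that $\lambda_G$ identifies $G$ with the standard basis $\{e_x : x\in G\}$ of $\mathbb{C}^{|G|}\cong \mathbb{C}G$, and that $\lambda_G(x)$ is the permutation matrix of left multiplication by $x$. Hence, for $f=\sum_{x\in G} f_x x\in\mathbb{C}G$, the extended matrix $\lambda_G(f)=\sum_{x\in G} f_x\lambda_G(x)$ of Eq.~\eqref{star1} is exactly the matrix of the linear endomorphism $v\mapsto fv$ of $\mathbb{C}G$. The key point is that this assignment $f\mapsto \lambda_G(f)$ is the (faithful) action of the unital algebra $\mathbb{C}G$ on itself by left multiplication, and the presence of the unit $1_G$ is what forces injectivity.

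Concretely, I would test $\lambda_G(f)$ against the basis vector $e_{1_G}$ indexed by the identity $1_G\in G$:
$$
\lambda_G(f)\, e_{1_G} = \Big(\sum_{x\in G} f_x\,\lambda_G(x)\Big) e_{1_G} = \sum_{x\in G} f_x\, e_{x\cdot 1_G} = \sum_{x\in G} f_x\, e_{x}.
$$
Thus the column of $\lambda_G(f)$ corresponding to $1_G$ lists precisely the coefficients $(f_x)_{x\in G}$. Consequently, if $\lambda_G(f)=0$ then that column vanishes, i.e.\ $f_x=0$ for every $x\in G$, so $f=0$. This establishes the injectivity of the first map.

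For the matrix version, I would reduce to the scalar case using the block structure of the Fourier transform. By definition, $\lambda_G(A)\in M_{n|G|}(\mathbb{C})$ is the block matrix whose $(i,j)$-block is $\lambda_G(A_{i,j})$, where $A_{i,j}\in\mathbb{C}G$. Therefore $\lambda_G(A)=0$ holds if and only if every block $\lambda_G(A_{i,j})$ is the zero matrix, which by the injectivity already proved forces $A_{i,j}=0$ for all $i,j$, that is $A=0$. Hence the second map is injective as well. There is no genuine obstacle here: the only step requiring care is the identification of the extended $\lambda_G$ with left multiplication (so that evaluation at $e_{1_G}$ recovers $f$), and once this is in place the matrix case is a purely formal block-wise consequence.
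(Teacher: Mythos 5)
Your proof is correct and takes essentially the same approach as the paper's: evaluating $\lambda_G(f)$ at $e_{1_G}$ to read off the coefficients $(f_x)_{x\in G}$ in a single column is just a tidier packaging of the paper's entry-wise observation that the coefficients $c_1,\ldots,c_m$ occupy disjoint positions of $\lambda_G(c_1g_1+\cdots+c_mg_m)$ (take $g_i=1_G$ there). The matrix case is handled identically in both proofs, by block-wise reduction to the scalar case.
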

\begin{proof}
For every $g\in G$ there exist $i,j\in \{1,2,\ldots,|G|\}$ such that $gg_i=g_j$, so that $\left(\lambda_G( g)\right)_{i,j}=1$. Clearly, for all $h\in G$ with $h\neq g$, one has $hg_i\neq g_j$ and then
$\left(\lambda_G(h)\right)_{i,j}=0$.
As a consequence, for $g_1,g_2,\ldots, g_m$  distinct elements of $G$, for $c_1,c_2,\dots, c_m\in \mathbb C$, the matrix
$$
\lambda_G(c_1 g_1+\cdots + c_m g_m)=c_1\lambda_G( g_1)+\cdots+ c_m \lambda_G( g_m)
$$
has disjoint entries equals to $c_1,c_2,\ldots, c_m$. In particular if $\lambda_G(c_1 g_1+\cdots + c_m g_m)$ is the zero matrix,
 it must be $c_1=c_2=\cdots=c_m=0$.  This implies that the map $\lambda_G\colon \mathbb C G \to M_{|G|}(\mathbb C) $ is injective.

Moreover if $A\in M_n(\mathbb{C}G)$ and $A\neq 0$, then there exist $i,j \in \{1,2,\ldots,n\}$ such that $A_{i,j}\neq 0$ in $\mathbb C G $.
By virtue of the injectivity of $\lambda_G$ on $\mathbb C G$ proved above, the matrix $\lambda_G(A_{i,j} )$ has nonzero entries and so the same is true for $\lambda_G(A)$. This shows that the map $\lambda_G\colon M_n(\mathbb{C}G)\to M_{n|G|}(\mathbb C) $ is injective.
\end{proof}

\subsection{Gain graphs and switching isomorphism}
Let $\Gamma=(V_\Gamma,E_\Gamma)$ be a finite simple graph, with vertex set $V_\Gamma$ and edge set $E_\Gamma$. If $e=\{u,v\}\in E_\Gamma$   we write $u\sim v$ and we say that $u$ and $v$ are adjacent vertices of $\Gamma$.
Let $G$ be a group and consider a map $\psi$ from the set of ordered pairs of adjacent vertices of $\Gamma$ to $G$, such that $\psi(u,v)=\psi(v,u)^{-1}$. The pair $(\Gamma,\psi)$ is said to be a \emph{$G$-gain graph} (or, equivalently, a gain graph on $G$): the graph $\Gamma$ is  called  \emph{underlying graph} and $\psi$ is said a \emph{gain function} on $\Gamma$.

Suppose $|V_\Gamma|=n$ and let us fix an ordering $v_1,v_2,\ldots, v_n$ in $V_\Gamma$. The adjacency matrix of $(\Gamma,\psi)$ is the group algebra valued  matrix $A_{(\Gamma,\psi)}\in M_n(\mathbb C G)$ with entries
$$
(A_{(\Gamma,\Psi)})_{i,j}=
 \begin{cases}
 \psi(v_i,v_j) &\mbox{if } v_i\sim v_j;
  \\  0 &\mbox{otherwise.}
\end{cases}
$$
It follows from the definition that $A_{(\Gamma,\psi)}^*=A_{(\Gamma,\psi)}$.

Let $W$ be a \emph{walk} of length $h$, that is, an ordered sequence of $h+1$  (not necessarily distinct) vertices of $\Gamma$, say $w_0,w_1,\ldots, w_h$ such that $w_i\sim w_{i+1}$. One can define the gain $\psi(W)\in G$ of the walk $W$ as
$$
\psi(W):=\psi(w_0,w_1)\cdots \psi(w_{h-1},w_h).
$$
A \emph{closed walk} of length $h$ is a walk of length $h$ such that $w_0=w_{h}$.
In what follows, we denote by $\mathcal C^h(\Gamma)$ the set of all closed walks of length $h$ in $\Gamma$.
\\
\indent A gain graph $(\Gamma,\psi)$ is \emph{balanced} if $\Psi(W)=1_G$ for every closed walk $W$.
An example of a balanced gain graph is that endowed with the \emph{trivial gain function}, which is constantly equal to $1_G$.
When the underlying graph $\Gamma$ is a tree, the gain graph $(\Gamma,\psi)$ is trivially balanced.
\\A fundamental concept in the theory of gain graphs is the \emph{switching equivalence}: two gain functions $\psi_1$ and $\psi_2$ on the same underlying graph
$\Gamma$ are switching equivalent if there exists $f\colon V_\Gamma\to G$ such that
\begin{equation}\label{eqsw}
\psi_2(v_i,v_j)=f(v_i)^{-1} \psi_1(v_i,v_j)f(v_j)
\end{equation}
for any pair of adjacent vertices $v_i$ and $v_j$ of $\Gamma$. If Eq.~\eqref{eqsw} holds, we also write $\psi_2=\psi_1^f$.

It turns out that a gain graph $(\Gamma, \Psi)$ is balanced if and only it is switching equivalent to the gain graph $\Gamma$ endowed with the trivial gain function (see \cite[Lemma~5.3]{zaslavsky1} or \cite[Lemma~1.1]{refft}). Moreover, in analogy with the complex case (see \cite[Lemma~3.2]{reff1}),
the following result holds.

\begin{theorem}{\cite[Theorem 4.1]{JACO}}\label{teo-vecchio}
Let $\psi_1$ and $\psi_2$ be two gain functions on the same underlying graph $\Gamma$, with adjacency matrices $A_{(\Gamma,\psi_1)}$ and $A_{(\Gamma,\psi_2)}\in M_n(\mathbb C G)$, respectively.
Then $\psi_1$ and $\psi_2$ are switching equivalent if and only if there exists a diagonal matrix $F\in M_n(\mathbb C G)$, with $F_{i,i}\in G$ for each $i=1,\ldots, n$, such that $F^*A_{(\Gamma,\psi_1)}F=A_{(\Gamma,\psi_2)}$.
\end{theorem}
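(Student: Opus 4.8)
The plan is to exhibit an explicit correspondence between switching functions $f\colon V_\Gamma\to G$ and the admissible diagonal matrices $F$, and then to show that under this correspondence the defining equation of switching equivalence \eqref{eqsw} is literally the entrywise reading of the matrix identity $F^*A_{(\Gamma,\psi_1)}F=A_{(\Gamma,\psi_2)}$. Concretely, to a function $f$ I associate the diagonal matrix $F\in M_n(\mathbb C G)$ with $F_{i,i}=f(v_i)\in G$, and conversely to a diagonal matrix $F$ with $F_{i,i}\in G$ I associate the function defined by $f(v_i):=F_{i,i}$. This is manifestly a bijection between the two classes of objects, so it suffices to verify that a single pair $(f,F)$ in correspondence satisfies \eqref{eqsw} if and only if it satisfies the matrix identity; this settles both implications simultaneously.

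The only preliminary observation I need concerns the involution of $\mathbb C G$ restricted to group elements. For $g\in G$, viewed as the element of $\mathbb C G$ whose only nonzero coefficient is a $1$ in position $g$, the formula $f^*=\sum_{x}\overline{f_{x^{-1}}}\,x$ yields $g^*=g^{-1}$. Hence $(F^*)_{i,i}=(F_{i,i})^*=f(v_i)^{-1}$, so that $F^*$ is again diagonal with all entries in $G$.

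Next I would carry out the key computation. Since both $F^*$ and $F$ are diagonal, the general product formula in $M_n(\mathbb C G)$ collapses to
\[
\left(F^*A_{(\Gamma,\psi_1)}F\right)_{i,j}=(F^*)_{i,i}\,(A_{(\Gamma,\psi_1)})_{i,j}\,F_{j,j}=f(v_i)^{-1}\,(A_{(\Gamma,\psi_1)})_{i,j}\,f(v_j),
\]
a product of three group elements, hence an element of $G$, whenever the middle factor is nonzero. If $v_i\sim v_j$ this equals $f(v_i)^{-1}\psi_1(v_i,v_j)f(v_j)$, while if $v_i\not\sim v_j$ it equals $0$. Comparing with the entries of $A_{(\Gamma,\psi_2)}$, namely $\psi_2(v_i,v_j)$ on adjacent pairs and $0$ otherwise, the matrix identity $F^*A_{(\Gamma,\psi_1)}F=A_{(\Gamma,\psi_2)}$ holds exactly when $f(v_i)^{-1}\psi_1(v_i,v_j)f(v_j)=\psi_2(v_i,v_j)$ for every adjacency, which is precisely \eqref{eqsw}, i.e.\ $\psi_2=\psi_1^f$. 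Reading this equivalence along the bijection in both directions gives the statement.

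There is no genuine obstacle here: the entire content is carried by the diagonal-product computation together with the identity $g^*=g^{-1}$. The only point requiring a little care is bookkeeping, namely checking that the admissibility constraint $F_{i,i}\in G$ (rather than a general element of $\mathbb C G$) is exactly what guarantees that the resulting entries $f(v_i)^{-1}\psi_1(v_i,v_j)f(v_j)$ are legitimate gains in $G$, so that $F^*A_{(\Gamma,\psi_1)}F$ is indeed the adjacency matrix of an honest gain function on $\Gamma$.
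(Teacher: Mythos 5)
Your proposal is correct: the identity $g^*=g^{-1}$ for $g\in G\subset\mathbb{C}G$, the collapse of the product $(F^*A_{(\Gamma,\psi_1)}F)_{i,j}=f(v_i)^{-1}(A_{(\Gamma,\psi_1)})_{i,j}f(v_j)$ for diagonal $F$, and the entrywise comparison (including the zero pattern off the edge set) together give exactly the equivalence with Eq.~\eqref{eqsw}. Note that the paper does not prove this statement itself --- it is quoted from \cite[Theorem~4.1]{JACO} --- but your direct verification is the standard argument such a proof takes, so there is nothing substantive to compare beyond confirming its correctness.
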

Two gain graphs $(\Gamma_1,\psi_1)$ and $(\Gamma_2,\psi_2)$ are said to be \emph{switching isomorphic} if there exists a graph isomorphism $\phi\colon V_{\Gamma_1}\to V_{\Gamma_2}$ such that $\psi_1$ is switching equivalent to the gain function $\psi_2\circ \phi$ on $\Gamma_1$ defined as $(\psi_2\circ \phi)(u,v)=\psi_2(\phi(u),\phi(v))$.

\begin{remark}\label{sw-isomorphism}
It can be seen that the adjacency matrices of switching isomorphic gain graphs can be obtained from each other by conjugating with a suitable group algebra valued matrix.
More precisely, two gain graphs $(\Gamma_1,\psi_1)$ and $(\Gamma_2,\psi_2)$ are switching isomorphic if and only if
\begin{equation*}\label{eq:swiso}
(PF)^*A_{(\Gamma_1,\psi_1)}(PF)=A_{(\Gamma_2,\psi_2)},
\end{equation*}
for some diagonal matrix $F\in M_n(\mathbb C G)$, with $F_{i,i}\in G$ for each $i=1,\ldots, n$, and for some matrix $P\in M_n(\mathbb C G)$ whose nonzero entries are equal to $1_G$ and where the  positions of the nonzero entries correspond to those of a suitable permutation matrix.
\end{remark}

\subsection{$G$-cospectrality, $\pi$-spectrum and $\pi$-cospectrality}\label{sec:4}
In this section we recall some basic definitions and properties from \cite{oncospe} about $G$-cospectrality and $\pi$-cospectrality of a gain graph $(\Gamma,\psi)$ on a group $G$, where $\pi$ is a representation of $G$.\\
\indent Let us start by defining a group algebra valued trace map $Tr\colon M_n(\mathbb C G) \to \mathbb C G$ on $M_n(\mathbb C G)$ as $Tr(A)= \sum_{i=1}^n A_{i,i}$.
Now observe that, if $A\in  M_n(\mathbb C G)$ is the adjacency matrix of a $G$-gain graph $(\Gamma,\psi)$, by virtue of \cite[Lemma~4.1]{JACO}  the entry $(A^h)_{i,j}\in\mathbb CG$ is the sum of the gains of all walks of length $h$ from $v_i$ to $v_j$.
In particular, we have:
\begin{equation*}
Tr(A^h)=\sum_{W \in\mathcal C^h(\Gamma)} \psi(W).
\end{equation*}

Let us denote by $[g]$ the conjugacy class of $g\in G$. A \emph{class function} $f\colon G\to \mathbb C$ is a map such that $g_1,g_2\in[g]\implies f(g_1)=f(g_2)$. The set  $\mathbb C_{Class}[G]$ of finitely supported class functions is a $\mathbb C$-vector space.

There exists a natural map $\mu\colon \mathbb C G\to \mathbb C_{Class}[G]$ defined as:
$$
\mu\left(\sum_{x\in G} a_x x\right)(g)=\sum_{x\in[g]} a_x.
$$
Notice that, if $G$ is Abelian, each conjugacy class contains only one element and $\mu$ is nothing but an isomorphism between $\mathbb C$-vector spaces.

\begin{definition}\label{def:cosp}
Two group algebra valued matrices $A,B\in M_n(\mathbb C G)$, with $A^*=A$ and $B^*=B$, are \emph{$G$-cospectral} if
$$
\mu(Tr(A^h))=\mu(Tr(B^h))\quad \forall h\in \mathbb N.
$$
Two gain graphs $(\Gamma_1,\psi_1)$ and $(\Gamma_2,\psi_2)$ on $G$ are \emph{$G$-cospectral} if the adjacency matrices $A_{(\Gamma_1,\psi_1)}$ and $A_{(\Gamma_2,\psi_2)}$ are $G$-cospectral.
\end{definition}
Roughly speaking, we are asking that the sequences $\{Tr(A^h)\}$ and $\{Tr(B^h)\}$ coincide, up to conjugacy. Notice that the equality of the analogous sequences for complex matrices is equivalent to the classical notion of cospectrality.

Moreover, we have shown in \cite[Theorem 4.5]{oncospe} that if two gain graphs $(\Gamma_1,\psi_1)$ and $(\Gamma_2,\psi_2)$ are switching isomorphic, then they are $G$-cospectral, whereas the inverse implication is not true. In fact, one of the main goals of the present paper consists in producing pairs of $G$-cospectral graphs which are not switching isomorphic. The following lemma holds.

\begin{lemma}\label{teo:preli}
Let $(\Gamma_1,\psi_1)$ and $(\Gamma_2,\psi_2)$ be two $G$-gain graphs and let $A,B\in M_n(\mathbb C G)$ be their adjacency matrices, respectively.
Let $C, D\in M_n(\mathbb C G)$  such that $CD=DC=I_{M_n(\mathbb C G)}$, where each entry of $D$ is supposed to be a complex multiple of $1_G$.
If $A=DB C$ then $(\Gamma_1,\psi_1)$ and $(\Gamma_2,\psi_2)$ are $G$-cospectral.
\end{lemma}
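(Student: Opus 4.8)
The plan is to reduce the statement to a cyclic-invariance property of the composite map $\mu\circ Tr$. The only genuine difficulty is that, since $\mathbb C G$ is noncommutative, the group-algebra-valued trace $Tr$ is \emph{not} cyclic: in general $Tr(XY)\neq Tr(YX)$ for $X,Y\in M_n(\mathbb C G)$. My first task is therefore to show that this obstruction disappears once one composes with $\mu$, and everything else is bookkeeping.

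First I would record that, from $CD=I_{M_n(\mathbb C G)}$, one gets $A^h=DB^hC$ for every $h\in\mathbb N$. Indeed $A=DBC$ is the base case, and an easy induction telescopes the inner factors: assuming $A^{h-1}=DB^{h-1}C$, one has $A^h=(DBC)(DB^{h-1}C)=DB(CD)B^{h-1}C=DB^hC$. Hence $Tr(A^h)=Tr(DB^hC)$, and it remains to compare this with $Tr(B^h)$ after applying $\mu$.

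The key step is the identity $\mu(ab)=\mu(ba)$ for all $a,b\in\mathbb C G$. Writing $a=\sum_x a_x x$ and $b=\sum_y b_y y$, the group elements appearing in $ab$ (resp. $ba$) are the products $xy$ (resp. $yx$); since $yx=x^{-1}(xy)x$, the elements $xy$ and $yx$ always lie in the same conjugacy class, so $\mu(ab)(g)=\sum_{xy\in[g]}a_x b_y=\sum_{yx\in[g]}a_x b_y=\mu(ba)(g)$ for every $g\in G$. Setting $\tau:=\mu\circ Tr$ and summing the corresponding entrywise identities over the matrix indices, I would then deduce $\tau(XY)=\tau(YX)$ for all $X,Y\in M_n(\mathbb C G)$, because
$$
\mu(Tr(XY))=\sum_{i,k}\mu(X_{i,k}Y_{k,i})=\sum_{i,k}\mu(Y_{k,i}X_{i,k})=\mu(Tr(YX)).
$$

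Finally I would apply this cyclic invariance with $X=D$ and $Y=B^hC$, and invoke $CD=I_{M_n(\mathbb C G)}$:
$$
\mu(Tr(A^h))=\tau(DB^hC)=\tau(B^hCD)=\tau(B^h)=\mu(Tr(B^h)).
$$
Since this holds for every $h\in\mathbb N$, the matrices $A$ and $B$ are $G$-cospectral by Definition~\ref{def:cosp}, hence so are the two gain graphs. I expect the cyclic invariance of $\mu\circ Tr$ to be the only real obstacle, the resolution being that post-composing with $\mu$ collapses the conjugate elements $xy$ and $yx$ and thereby restores the cyclicity that $Tr$ lacks over $\mathbb C G$. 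I note in passing that this argument uses only $CD=I_{M_n(\mathbb C G)}$, and neither the relation $DC=I_{M_n(\mathbb C G)}$ nor the hypothesis that the entries of $D$ are complex multiples of $1_G$; these are presumably imposed because they are the natural conditions under which the lemma is applied in the subsequent switching constructions.
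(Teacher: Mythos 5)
Your proof is correct, but it takes a genuinely different route from the paper's. The paper proves the trace identity \emph{before} applying $\mu$: since every entry of $D$ is a complex multiple of $1_G$, hence central in $\mathbb{C}G$, one has $D_{i,k}S_{k,i}=S_{k,i}D_{i,k}$ entrywise and therefore $Tr(DS)=Tr(SD)$ exactly in $\mathbb{C}G$; combined with $A^h=DB^hC$ this gives the stronger conclusion $Tr(A^h)=Tr(B^h)$ on the nose, and $\mu$ enters only at the very end via Definition \ref{def:cosp}. You instead prove that the composite $\mu\circ Tr$ is cyclic on all of $M_n(\mathbb{C}G)$, using that $xy$ and $yx=x^{-1}(xy)x$ are conjugate in $G$; this makes the centrality hypothesis on $D$ (and the relation $DC=I_{M_n(\mathbb C G)}$) superfluous, exactly as you observe. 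What each buys: the paper's argument yields equality of the sequences $\{Tr(A^h)\}$ and $\{Tr(B^h)\}$ in $\mathbb{C}G$ itself, before any collapsing of conjugacy classes, while yours proves a strictly stronger lemma --- $CD=I_{M_n(\mathbb C G)}$ and $A=DBC$ alone imply $G$-cospectrality, with no condition on the entries of $D$. Your generalization is not idle: taking $D=PF$ and $C=(PF)^*$ as in Remark \ref{sw-isomorphism}, whose entries are group elements rather than complex multiples of $1_G$, your lemma immediately recovers the fact (quoted in the paper from \cite{oncospe}) that switching isomorphic gain graphs are $G$-cospectral, a case the lemma as stated in the paper cannot reach.
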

\begin{proof}
Let $S\in M_n(\mathbb C G)$. Then
$$
Tr(DS) = \sum_{i=1}^n (DS)_{i,i} = \sum_{i,k=1}^n D_{i,k}S_{k,i} = \sum_{k,i=1}^n S_{k,i}D_{i,k} = \sum_{k=1}^n (SD)_{k,k} = Tr(SD)
$$
where $D_{i,k}S_{k,i}= S_{k,i}D_{i,k}$ is true for each $i,k=1,\ldots, n$, since the entries of $D$ are all multiple of $1_G$. Now using the fact that $CD=I_{M_n(\mathbb C G)}$, we obtain $A^h = (DBC)^h = D B^h C$ for each $h$ in $\mathbb{N}$. Therefore, if we set $S = B^h C$, we get:
\begin{equation*}
Tr(A^h) = Tr(D B^h C) = Tr(B^h C D) = Tr(B^h),
\end{equation*}
and so the matrices $A$ and $B$ are $G$-cospectral, according with Definition \ref{def:cosp}.
\end{proof}

Consider a $G$-gain graph $(\Gamma,\psi)$ with adjacency matrix $A\in  M_n(\mathbb C G)$. The matrix $\pi(A)\in M_{nk}(\mathbb C)$ is called the \emph{represented adjacency matrix} of
$(\Gamma,\psi)$ with respect to the representation $\pi$.  Notice that, if $\pi$ is unitary, then by Proposition \ref{productfou} $\pi(A)$ is a Hermitian matrix and we say that its (real) spectrum is the \emph{$\pi$-spectrum of $(\Gamma,\psi)$}, denoted with $\sigma_\pi(\Gamma,\psi)$ or also $\sigma_\pi(A)$. Moreover, again by Proposition \ref{productfou}, if $\pi\sim\pi'$, then $\sigma_\pi(A)=\sigma_{\pi'}(A)$. This leads to the following definition.

\begin{definition}
Let $(\Gamma_1,\psi_1)$ and $(\Gamma_2,\psi_2)$ be two $G$-gain graphs. Let $\pi$ be a unitary representation of $G$.
The graphs $(\Gamma_1,\psi_1)$ and $(\Gamma_2,\psi_2)$ are said to be $\pi$-cospectral if $\sigma_\pi(\Gamma_1,\psi_1)=\sigma_\pi(\Gamma_2,\psi_2)$.
\end{definition}

Although the spectrum with respect to a given representation $\pi$ was formally introduced only in \cite{JACO}, particular cases of $\pi$-spectra have already been considered in the literature:
\begin{itemize}
\item If $(\Gamma,\psi)$ is a $G$-gain graph and $\pi_0$ is the trivial representation of $G$, then $\pi_{0}(A_{(\Gamma, \psi)})$ is the adjacency matrix of the underlying graph $\Gamma$, and the $\pi_0$-spectrum of $(\Gamma,\psi)$ is the spectrum of $\Gamma$.
\item If $(\Gamma,\psi)$ is a $\mathbb{T}_2$-gain graph and $\pi_{id}$ is the identical representation of $\mathbb{T}_2$, then $\pi_{id}(A_{(\Gamma, \psi)})$ is the classical adjacency matrix of the signed graph, and the $\pi_{id}$-spectrum of $(\Gamma,\psi)$ is the classical spectrum of the signed graph.
\item If $(\Gamma,\psi)$ is a $\mathbb{T}$-gain graph and $\pi_{id}$ is the identical representation of $\mathbb{T}$, then $\pi_{id}(A_{(\Gamma, \psi)})$ is the classical adjacency matrix of the complex unit gain graph, and the $\pi_{id}$-spectrum of $(\Gamma,\psi)$ is the classical spectrum of the complex unit gain graph.
\item If $(\Gamma,\psi)$ is a $G$-gain graph and $\lambda_G$ is the left regular representation of $G$, with $G$ finite, then $\lambda_G(A_{(\Gamma, \psi)})$ is the adjacency matrix of the (left) cover graph of $(\Gamma,\psi)$, and the $\lambda_G$-spectrum of $(\Gamma,\psi)$ is the spectrum of the cover graph of $(\Gamma,\psi)$.
\end{itemize}
It may happen that $(\Gamma_1,\psi_1)$ and $(\Gamma_2,\psi_2)$ are $\pi_1$-cospectral but not $\pi_2$-cospectral, even if $\pi_1$ and $\pi_2$ are faithful and irreducible \cite[Example 5.7]{oncospe}. However, if two $G$-gain graphs $(\Gamma_1,\psi_1)$ and $(\Gamma_2,\psi_2)$ are $\pi_1$-cospectral and  $\pi_2$-cospectral, with $\pi_1$ and $\pi_2$ unitary representations, then $(\Gamma_1,\psi_1)$ and $(\Gamma_2,\psi_2)$ are $(\pi_1\oplus \pi_2)$-cospectral by Proposition \ref{productfou}. The following theorem holds.

\begin{theorem}\cite[Theorem 4.14]{oncospe}\label{teo:cosp}
Let $(\Gamma_1,\psi_1)$ and $(\Gamma_2,\psi_2)$ be two $G$-gain graphs, with $G$ finite. Let $\pi_0,\ldots,\pi_{m-1}$ be a complete system of irreducible, unitary, representations of $G$. The following are equivalent.
\begin{enumerate}
\item $(\Gamma_1,\psi_1)$ and $(\Gamma_2,\psi_2)$ are $G$-cospectral;
\item $(\Gamma_1,\psi_1)$ and $(\Gamma_2,\psi_2)$ are $\pi$-cospectral, for every unitary representation $\pi$ of $G$;
\item $(\Gamma_1,\psi_1)$ and $(\Gamma_2,\psi_2)$ are $\pi_i$-cospectral, for each $i=0,\ldots,m-1$.
\end{enumerate}
\end{theorem}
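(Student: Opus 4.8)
The plan is to reduce all three conditions to a single numerical condition on the trace sequences, passing between the group-algebra picture and the complex-matrix picture by means of characters. The starting remark is that a $\pi$-spectrum is encoded by power sums: since $\pi$ is unitary, $\pi(A),\pi(B)\in M_{nk}(\mathbb C)$ are Hermitian of the same fixed size $nk$ (Proposition \ref{productfou}), so by Newton's identities $\sigma_\pi(A)=\sigma_\pi(B)$ if and only if $\mathrm{tr}\big(\pi(A)^h\big)=\mathrm{tr}\big(\pi(B)^h\big)$ for every $h$, where $\mathrm{tr}$ is the ordinary trace in $M_{nk}(\mathbb C)$. Thus $\pi$-cospectrality becomes a statement about the complex numbers $\mathrm{tr}(\pi(A)^h)$.

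The technical heart is an identity relating these complex traces to the $\mathbb C G$-valued trace $Tr$. Let $\chi_\pi(g)=\mathrm{tr}(\pi(g))$ be the character of $\pi$, extended by linearity to $\mathbb C G$. Because the extension $\pi\colon M_n(\mathbb C G)\to M_{nk}(\mathbb C)$ of Eq.~\eqref{star2} is an algebra homomorphism, $\pi(A)^h=\pi(A^h)$, and since $\pi(A^h)$ is the block matrix whose $(i,i)$-block is $\pi\big((A^h)_{i,i}\big)$, summing the diagonal and using linearity of $\pi$ and of the trace gives
\[
\mathrm{tr}\big(\pi(A)^h\big)=\mathrm{tr}\Big(\pi\big(Tr(A^h)\big)\Big)=\chi_\pi\big(Tr(A^h)\big).
\]
As $\chi_\pi$ is a class function, it factors through $\mu$; writing $Tr(A^h)=\sum_{x\in G} a_x x$ and grouping by conjugacy classes yields
\[
\mathrm{tr}\big(\pi(A)^h\big)=\sum_{x\in G} a_x\,\chi_\pi(x)=\sum_{[g]}\mu\big(Tr(A^h)\big)(g)\,\chi_\pi(g).
\]

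With this identity the implications are short. For $(1)\Rightarrow(2)$: $G$-cospectrality means $\mu(Tr(A^h))=\mu(Tr(B^h))$ for all $h$, so the identity (valid for \emph{any} unitary $\pi$) gives $\mathrm{tr}(\pi(A)^h)=\mathrm{tr}(\pi(B)^h)$ for all $h$, whence $\sigma_\pi(A)=\sigma_\pi(B)$ by the power-sum step; as $\pi$ was arbitrary, this is $(2)$. The implication $(2)\Rightarrow(3)$ is immediate, each $\pi_i$ being a unitary representation. For $(3)\Rightarrow(1)$: set $\phi_h=\mu(Tr(A^h))-\mu(Tr(B^h))\in\mathbb C_{Class}[G]$; then $\pi_i$-cospectrality and the identity give $\sum_{[g]}\phi_h(g)\chi_{\pi_i}(g)=0$ for all $i$ and all $h$. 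Reading this as the character-table matrix $X=[\chi_{\pi_i}(g_j)]$ applied to the coordinate vector of $\phi_h$ in the basis of class-indicator functions, invertibility of $X$ (equivalently, linear independence of the $m$ irreducible characters in the $m$-dimensional space $\mathbb C_{Class}[G]$) forces $\phi_h=0$ for every $h$, which is exactly $(1)$.

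I expect the only real obstacle to be bookkeeping: justifying on the nose that $\mathrm{tr}(\pi(A)^h)=\chi_\pi\big(Tr(A^h)\big)$ through the two extensions of $\pi$ and of the trace, and being precise that the pairing of class functions used in $(3)\Rightarrow(1)$ is the unnormalized, conjugation-free dot product over conjugacy-class representatives, whose non-degeneracy is exactly the invertibility of the character table. The representation-theoretic inputs — a Hermitian matrix is determined up to spectrum by its power sums, a character is a class function, and the irreducible characters form a basis of $\mathbb C_{Class}[G]$ for finite $G$ — are standard and can be used as black boxes.
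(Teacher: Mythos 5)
Your proof is correct. Note that this paper does not actually prove Theorem \ref{teo:cosp} --- it is quoted without proof from \cite{oncospe} --- so the only comparison available is with that cited source, and your argument (reducing $\pi$-cospectrality of the Hermitian matrices $\pi(A),\pi(B)$ to equality of the power sums $\mathrm{tr}(\pi(A)^h)$ via Newton's identities, the key identity $\mathrm{tr}\bigl(\pi(A)^h\bigr)=\chi_\pi\bigl(Tr(A^h)\bigr)$ coming from the homomorphism property of the extension in Eq.~\eqref{star2}, and the invertibility of the character table to get $(3)\Rightarrow(1)$) is essentially the character-theoretic route of the cited proof, with all the standard inputs correctly identified.
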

It follows from Theorem \ref{teo:cosp} that, in some sense, the $G$-spectrum of a $G$-gain graph can be defined as the list of its $\pi_i$-spectra, with $\pi_i$ varying in a complete system of irreducible representations.

\section{$G$-Godsil-McKay switching}\label{sec:G}
Inspired by the classical construction given by Godsil and McKay in \cite{GoMc}, that was already generalized to signed graphs and complex unit gain graphs  \cite{cos, gm}, we present in this section a construction allowing to produce pairs of $G$-cospectral gain graphs over an arbitrary group $G$.

Let $(\Gamma,\psi)$  be a $G$-gain graph, and suppose that $\alpha=\{C_0,C_1,\ldots, C_k\}$ is a partition of the vertex set $V_\Gamma$. Let $|V_\Gamma|=n$.
For any $v\in V_\Gamma$, we set
\begin{equation}\label{eq:grado}
\Psi_i(v):=\sum_{w\in C_i,\; w\sim v} \psi(v,w)\quad\in \mathbb C G.
\end{equation}
\begin{remark}\label{remarkpi0}\rm
If $\pi_0$ is the trivial representation mapping each element of $G$ to $1$, then $\pi_0(\Psi_i(v))$ is the number of vertices  of $C_i$ which are adjacent to $v$.
\end{remark}

\begin{definition}\label{def:Gpartition}
A partition $\alpha=\left\{C_0,C_1,\ldots, C_k \right\}$ of the vertex set $V_\Gamma$ of a gain graph $(\Gamma,\psi)$  is said to be a \emph{$G$-GM partition} if:
\begin{itemize}
\item
for every $i,j\in \{1,2,\ldots, k\}$ and for every $v,v'\in C_i$, one has $$\Psi_j(v)=\Psi_j(v');$$
\item
for every $v\in C_0$, for every $i\in \{1,2,\ldots, k\}$, there exist $g_1,g_2\in G\cup \left\{0\right\}\subset \mathbb C G$ such that
$$\Psi_i(v)=\frac{|C_i|}{2}\, g_1+\frac{|C_i|}{2}\, g_2.$$
\end{itemize}
\end{definition}
In words, in a $G$-GM partition, the number of edges issuing from a vertex $v$ of $C_i$ and arriving  to vertices of $C_j$ with gain $g$, does not depend on the choice of the starting vertex in $C_i$, and this must be true for every  $i,j\in \{1,2,\ldots, k\}$  and every $g\in G$. Moreover, a vertex in the part $C_0$ must be adjacent to all vertices of a part $C_i$ (half with a gain and half with another, not necessarily distinct, gain) or to exactly half of the vertices (all with the same gain).

\begin{example}\label{exa:0}
\begin{figure}
\centering
\begin{tikzpicture}[scale=0.8]
\node[vertex]  (1) at (0,0){\tiny $v_0$};
\node[vertex]  (2) at (-2,-0.83) {\tiny $v_1$};
\node[vertex]  (3) at (-2,0.83) {\tiny $v_2$};
\node[vertex]  (4) at (-0.83,2) {\tiny $v_3$};
\node[vertex]  (5) at (0.83,2) {\tiny $v_4$};
\node[vertex]  (6) at (2,0.83){\tiny $v_5$};
\node[vertex]  (7) at (2,-0.83) {\tiny $v_6$};
\node[vertex]  (8) at (0.83,-2) {\tiny $v_7$};
\node[vertex]  (9) at (-0.83,-2) {\tiny $v_8$};
\node  (10) at (-2.2,-2) {\small $(\Gamma,\psi)$};


\draw[edge] (2) -- (3);
\draw[edge] (3) -- (4);
\draw[edge] (5) -- (4);
\draw[edge] (5) -- (6);
\draw[edge] (7) -- (6);
\draw[edge] (7) -- (8);
\draw[edge] (9) -- (8);
\draw[edge] (9) -- (2);
\draw[dedge] (1) -- (3) node[midway, above] {\tiny $i$};
\draw[dedge] (1) -- (4) node[midway, right] {\tiny $i$};
\draw[dedge] (1) -- (8) node[midway, right] {\tiny $i$};
\draw[dedge] (1) -- (9) node[midway, left] {\tiny $i$};
\draw[edge] (1) -- (2);
\draw[edge] (1) -- (7);
\draw[edge] (1) -- (5);
\draw[edge] (1) -- (6);

\node[vertex]  (1) at (0+6.3,0){ \tiny $v_0$};
\node[vertex]  (2) at (-2+6.3,-0.83) {\tiny $v_1$};
\node[vertex]  (3) at (-2+6.3,0.83) {\tiny $v_2$};
\node[vertex]  (4) at (-0.83+6.3,2) {\tiny $v_3$};
\node[vertex]  (5) at (0.83+6.3,2) {\tiny $v_4$};
\node[vertex]  (6) at (2+6.3,0.83){\tiny $v_5$};
\node[vertex]  (7) at (2+6.3,-0.83) {\tiny $v_6$};
\node[vertex]  (8) at (0.83+6.3,-2) {\tiny $v_7$};
\node[vertex]  (9) at (-0.83+6.3,-2) {\tiny $v_8$};
\node  (10) at (-2.2+6.3,-2) {\small $(\Gamma^\alpha, \psi^\alpha)$};


\draw[edge] (2) -- (3);
\draw[edge] (3) -- (4);
\draw[edge] (5) -- (4);
\draw[edge] (5) -- (6);
\draw[edge] (7) -- (6);
\draw[edge] (7) -- (8);
\draw[edge] (9) -- (8);
\draw[edge] (9) -- (2);
\draw[dedge] (1) -- (2) node[midway, above] {\tiny $i$};
\draw[dedge] (1) -- (7) node[midway, below] {\tiny $i$};
\draw[dedge] (1) -- (5) node[midway, left] {\tiny $i$};
\draw[dedge] (1) -- (6) node[midway, above] {\tiny $i$};
\draw[edge] (1) -- (3);
\draw[edge] (1) -- (4);
\draw[edge] (1) -- (8);
\draw[edge] (1) -- (9);
\end{tikzpicture}
\caption{ The $\mathbb{T}$-gain graphs $(\Gamma,\psi)$ and $(\Gamma^\alpha, \psi^\alpha)$ of Examples \ref{exa:0} and \ref{exa:01}.}\label{fig:0}
\end{figure}
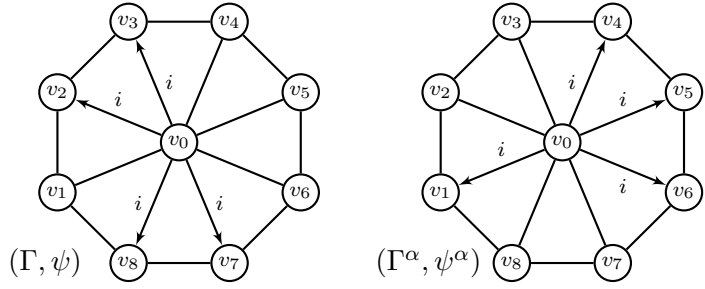

Let $(\Gamma,\psi) $ be the $\mathbb{T}$-gain graph represented in Fig.~\ref{fig:0}, where the undirected solid lines correspond to edges labeled by $1$, whereas the orientation from $v_0$ to $v_j$, for $j\in\{2,3,7,8\}$ is due to the nontrivial gains $\psi(v_0, v_j)  = i$ (and therefore, $\psi(v_j, v_0)= i^{-1}=-i)$. If we put $C_0 = \{v_0\}$ and $C_1 = \{v_j : j= 1, \ldots, 8\}$, then it is easy to check that $\alpha=\{C_0, C_1\}$ is a $\mathbb{T}$-GM partition.  In fact
$$
\Psi_1(v_0)=4\cdot 1+4\cdot i \quad \mbox{ and }\quad  \Psi_1(v_j)=2 \quad  \mbox{ for } j= 1, \ldots, 8.
$$
\end{example}

\begin{definition}\label{def:gm-s}
Let $\alpha$ be a  \emph{$G$-GM partition} of the gain graph $(\Gamma,\psi)$. The graph $(\Gamma,\psi)^\alpha=(\Gamma^\alpha,\psi^\alpha)$ is the gain graph such that:
\begin{itemize}
\item the adjacency and the gains between pairs of vertices in $\bigcup_{i=1}^k C_i$ are the same as in $(\Gamma,\psi)$;
\item for $v\in C_0$ and $i\in \{1,2,\ldots, k\}$ such that  $\Psi_i(v)=\frac{|C_i|}{2}\, g_1+\frac{|C_i|}{2}\, g_2$,  with $g_1,g_2\in G\cup \left\{0\right\}$, and for every $w\in C_i$, we set:
$$\psi^\alpha(v,w)=
\begin{cases}
g_1 &\mbox{ if } \psi(v,w)=g_2,\\
g_2 &\mbox{ if } \psi(v,w)=g_1,
\end{cases}
$$
with the convention that $\psi(v,w)=0$ (resp. $\psi^\alpha(v,w)=0$) means that $v$ and $w$ are not adjacent in $\Gamma$ (resp. in $\Gamma^\alpha$).
\end{itemize}
A $G$-GM partition $\alpha$ is said to be \emph{nontrivial} if $(\Gamma,\psi)$ and $(\Gamma,\psi)^\alpha$ are not switching isomorphic.
\end{definition}
\noindent For each $i=0,1,\ldots, k$, put $|C_i|=n_i$.
With a given $G$-GM partition $\alpha=\left\{C_0,C_1,\ldots, C_k \right\}$, we associate a group algebra valued matrix $ Q_\alpha\in M_n(\mathbb C G) $ in the following way:
$$
\left(Q_\alpha\right)_{u,v}=
\begin{cases}
1_G &\mbox{ if } u=v\in C_0\\
 \frac{2-n_i}{n_i}\,1_G &\mbox{ if } u=v\in C_i,\; i=1,\ldots, k\\
\frac{2}{n_i}\,1_G &\mbox{ if } u,v\in C_i,\;u \neq v,\; i=1,\ldots, k \\
0 &\mbox{ otherwise.}
\end{cases}
$$
Observe that the nonzero entries of $Q_\alpha$ are all real multiples of $1_G$ and $Q_\alpha^\ast = Q_\alpha$. Let $
J_{M_n(\mathbb{C}G)}\in M_n(\mathbb{C}G)$  be the square matrix of size $n$ whose entries are all equal to $1_G\in\mathbb CG$.
Notice that, if we put
\begin{eqnarray}\label{Qn}
Q_{n} = \frac{2}{n}J_{M_{n}(\mathbb{C}G)} - I_{M_{n}(\mathbb{C}G)},
\end{eqnarray}
then it holds $Q_{n}^\ast = Q_{n}$ and we have:
$$
Q_\alpha = \left(
  \begin{array}{cccc}
1_{M_{n_0}(\mathbb{C}G)} & 0 & \ldots & 0 \\
0 & Q_{n_1} & \ldots & 0 \\
\vdots & \vdots & \ddots & \vdots\\
0 & 0 & \ldots & Q_{n_k}
  \end{array}
\right).
$$
Observe that the matrix $Q_n$ is inspired by the matrix considered in \cite[Theorem 2.2]{GoMc}.

\begin{lemma}\label{propQ}
Let $Q_n$ be the matrix defined in Eq. \eqref{Qn}. Then the following formulas hold.
\begin{enumerate}
\item $Q_{n}^2 = 1_{M_{n}(\mathbb{C}G)}$.
\item Let $X \in M_{m,n}(\mathbb{C}G)$ with constant row sums $S_r$ and constant column sums $S_c$. Then $Q_{m}XQ_{n} = X$.
\item Let $\lambda \in \mathbb{C}G$ and let ${\bf x}= (\lambda, \ldots, \lambda)^T$ be a constant column of length $n$. Then $Q_{n}{\bf x} = {\bf x}$.
\item Let ${\bf x} = (x_1, \ldots , x_{n})^T$ be a column of length $n$, with $x_i \in \mathbb{C}G$ for each $i=1,\ldots, n$, such that $\sum_{i = 1}^{n} x_i = 0$. Then
    $Q_{n} {\bf x} = -{\bf x}$.
\item Let $g_1, g_2$ in $G\cup \{0\}$ and let ${\bf x}$ be a column of length $n$, such that half entries of ${\bf x}$ are equal to $g_1$ and half entries are equal to $g_2$. Then $Q_{n} {\bf x} = (g_1+g_2)\cdot (1_G, \ldots , 1_G)^T - {\bf x}$.
\end{enumerate}
\end{lemma}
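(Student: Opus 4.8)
The plan is to reduce all five identities to a single elementary observation about the all-$1_G$ matrix $J := J_{M_n(\mathbb{C}G)}$ together with the fact that $1_G$ and the complex coefficients $\frac{2}{n}$ are central. Concretely, since every entry of $J$ equals $1_G$, left-multiplication by $J$ replaces each entry of a column by the sum of that column, while for a rectangular $X$ the product $J_{M_m(\mathbb{C}G)}X$ replaces each column of $X$ by its sum and $XJ_{M_n(\mathbb{C}G)}$ replaces each row by its sum; in particular $J^2 = n\,J$. Because the group-algebra elements that occur (namely $\lambda$, $g_1$, $g_2$, and the sums $S_r,S_c$) are only ever multiplied by central scalars or by $1_G$, no ordering problem arises from the possible non-commutativity of $G$, so all the products below are unambiguous.

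For item (1) I would expand $Q_n^2 = \left(\frac{2}{n}J - I\right)^2 = \frac{4}{n^2}J^2 - \frac{4}{n}J + I$, using that $J$ commutes with $I$, and substitute $J^2 = nJ$ to obtain $\frac{4}{n}J - \frac{4}{n}J + I = I_{M_n(\mathbb{C}G)}$. (This is also the special case $X=I_{M_n(\mathbb{C}G)}$ of item (2), whose row and column sums are all $1_G$.) Items (3)--(5) then follow at once from the single computation that for any column ${\bf x}=(x_1,\ldots,x_n)^T$ one has $J{\bf x} = \left(\sum_i x_i\right)(1_G,\ldots,1_G)^T$, whence
$$
Q_n{\bf x} = \frac{2}{n}\left(\sum_{i=1}^n x_i\right)(1_G,\ldots,1_G)^T - {\bf x}.
$$
Substituting $\sum_i x_i = n\lambda$, $\sum_i x_i = 0$, and $\sum_i x_i = \frac{n}{2}(g_1+g_2)$ respectively yields the three stated formulas.

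The only item needing slightly more care is item (2). Expanding the product gives
$$
Q_m X Q_n = \frac{4}{mn}J_mXJ_n - \frac{2}{m}J_mX - \frac{2}{n}XJ_n + X,
$$
where $J_m:=J_{M_m(\mathbb{C}G)}$ and $J_n:=J_{M_n(\mathbb{C}G)}$. Here $J_mX$ is the constant matrix all of whose entries equal the common column sum $S_c$, $XJ_n$ is the constant matrix with entries equal to the common row sum $S_r$, and $J_mXJ_n$ is the constant matrix with entry $nS_c = mS_r$. The first three terms are therefore constant matrices with common entries $\frac{4}{m}S_c$, $-\frac{2}{m}S_c$, and $-\frac{2}{n}S_r$, whose sum is $\frac{2}{m}S_c - \frac{2}{n}S_r$. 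I expect the one genuinely load-bearing point to be that this vanishes: double-counting the entries of $X$ by rows and by columns forces $mS_r = nS_c$, hence $\frac{2}{m}S_c = \frac{2}{n}S_r$ and $Q_mXQ_n = X$. Everything else is routine bookkeeping with central coefficients.
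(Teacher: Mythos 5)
Your proof is correct, but it takes a genuinely different route from the paper's. The paper does not compute directly in $M_{m,n}(\mathbb{C}G)$: it proves only item (2) in detail, by decomposing $X = g_1X_1+\cdots+g_kX_k$ where each $X_i$ is a $\{0,1_G\}$-valued indicator matrix of the positions carrying gain $g_i$, observing that each $X_i$ again has constant row and column sums, and then invoking the already-established complex case (Lemma 2.1 of \cite{gm}) for each layer $X_i$; items (1) and (3)--(5) are dispatched as ``analogous'' to the complex arguments. Your argument is instead self-contained and uniform: the identities $J^2 = nJ$ and $Q_n\mathbf{x} = \frac{2}{n}\bigl(\sum_i x_i\bigr)(1_G,\ldots,1_G)^T - \mathbf{x}$, plus the double-counting identity $mS_r = nS_c$, settle all five items without citing the complex case. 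What your approach buys: it works verbatim for arbitrary $\mathbb{C}G$-valued entries (the paper's indicator decomposition as written presumes entries in $G\cup\{0\}$, which suffices for the application but is narrower than the lemma's hypotheses, and its claim that each $X_i$ inherits constant row and column sums quietly uses the linear independence of distinct group elements in $\mathbb{C}G$), and it isolates explicitly the one load-bearing fact, $mS_r=nS_c$. What the paper's approach buys: brevity, and a conceptual explanation of why non-commutativity of $G$ is harmless --- the entries of $Q_m,Q_n$ are complex multiples of $1_G$, so after splitting $X$ into group-element layers everything reduces to complex matrix algebra; your inline remark about central coefficients is the same observation, just kept implicit rather than made the organizing principle.
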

\begin{proof}
The proof is analogous to those of Lemma 2.1 and Proposition 2.3 of \cite{gm}. To give an idea of how computations work when dealing with elements from $\mathbb{C}G$ instead of complex numbers, we focus here only on the proof of property (2). Observe that the matrix $X$ can be rewritten as
$$
X = g_1X_1+g_2X_2 + \cdots + g_k X_k,
$$
where $g_i\in G$ and the entries of $X_i$ belong to the set $\{0,1_G\}$, for each $i=1,\ldots, k$. Moreover, by assumption, each matrix $X_i$ has constant row sums and constant column sums. Since the matrices $X_i, Q_m, Q_n$ have entries which are complex multiple of $1_G$, they behave as complex matrices in all computations. Therefore, it follows from Lemma 2.1 of \cite{gm} that $Q_m X_i Q_n = X_i$ for each $i=1,\ldots, k$. By linearity and using the fact that $g_i1_G=1_Gg_i$ for each $i$, the property (2) follows.
\end{proof}
About property (5), observe that the vector $Q_{n} {\bf x}$ is obtained from ${\bf x}$ by switching the entries $g_1$ and $g_2$. In particular, the property (5) recovers the property (3) when $g_1=g_2=\lambda$.

\begin{theorem}\label{G-Gods}
Let $(\Gamma,\psi)$ be a $G$-gain graph and let $\alpha$ be a $G$-GM partition, with associated matrix $ Q_\alpha \in M_n( \mathbb C G)$. Then
\begin{equation}\label{eq:fonda}
 A_{(\Gamma^\alpha,\psi^\alpha)}= Q_\alpha A_{(\Gamma,\psi)} Q_\alpha.
 \end{equation}
 In particular, $(\Gamma,\psi)$ and $(\Gamma,\psi)^\alpha$ are $G$-cospectral
\end{theorem}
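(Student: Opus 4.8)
The plan is to verify the matrix identity \eqref{eq:fonda} block by block with respect to the partition $\alpha=\{C_0,C_1,\ldots,C_k\}$, and then to deduce $G$-cospectrality directly from Lemma \ref{teo:preli}. Writing $A=A_{(\Gamma,\psi)}$ and denoting by $A_{ij}$ the block of $A$ whose rows are indexed by $C_i$ and columns by $C_j$, I would first record that, because of the block structure of $Q_\alpha$ (the identity block on $C_0$ and the block $Q_{n_i}$ on each $C_i$ with $i\geq 1$), the $(i,j)$ block of $Q_\alpha A Q_\alpha$ is exactly $Q^{(i)}A_{ij}Q^{(j)}$, where $Q^{(0)}=1_{M_{n_0}(\mathbb C G)}$ and $Q^{(i)}=Q_{n_i}$ for $i\geq 1$. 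Thus \eqref{eq:fonda} reduces to checking, block by block, that $Q^{(i)}A_{ij}Q^{(j)}$ coincides with the corresponding block of $A_{(\Gamma^\alpha,\psi^\alpha)}$.

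For the blocks with $i,j\geq 1$ the gains are left unchanged by the switching, so I must show $Q_{n_i}A_{ij}Q_{n_j}=A_{ij}$. The first condition in Definition \ref{def:Gpartition} says precisely that the row sums of $A_{ij}$ are constant and equal to $\Psi_j(v)$ for $v\in C_i$; interchanging the roles of $i$ and $j$ and using $\psi(v,w)=\psi(w,v)^*$ shows that the column sums of $A_{ij}$ are constant as well (they equal $\Psi_i(w)^*$ for $w\in C_j$). Hence Lemma \ref{propQ}(2) yields $Q_{n_i}A_{ij}Q_{n_j}=A_{ij}$, as required. The block $A_{00}$ is left untouched, being flanked by identity blocks, which matches the fact that adjacencies and gains inside $C_0$ are preserved.

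For the blocks involving $C_0$ it suffices, by Hermitian symmetry of both sides (recall $Q_\alpha^*=Q_\alpha$ and $A^*=A$, so $Q_\alpha A Q_\alpha$ is self-adjoint, as is the adjacency matrix $A_{(\Gamma^\alpha,\psi^\alpha)}$), to treat the block $A_{0i}Q_{n_i}$ with $i\geq 1$, arguing one row at a time. Fixing $v\in C_0$, let $\mathbf{x}$ be the column of length $n_i$ whose entries are the gains $\psi(v,w)$, $w\in C_i$ (with the convention $\psi(v,w)=0$ for non-adjacency). The second condition in Definition \ref{def:Gpartition}, namely $\Psi_i(v)=\frac{n_i}{2}g_1+\frac{n_i}{2}g_2$ with $g_1,g_2\in G\cup\{0\}$, together with the fact that the entries of $\mathbf{x}$ lie in $G\cup\{0\}$ (so a sum of such entries can produce the coefficient $\frac{n_i}{2}$ on a given group element only by having exactly $\frac{n_i}{2}$ of them equal to it, no cancellation being possible), forces half the entries of $\mathbf{x}$ to equal $g_1$ and half to equal $g_2$. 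Since the entries of $Q_{n_i}$ are central real multiples of $1_G$, the corresponding row of $A_{0i}Q_{n_i}$ is $(Q_{n_i}\mathbf{x})^T$, and Lemma \ref{propQ}(5) gives $Q_{n_i}\mathbf{x}=(g_1+g_2)(1_G,\ldots,1_G)^T-\mathbf{x}$. Reading off the $w$-th entry, the new gain is $(g_1+g_2)1_G-\psi(v,w)$, which equals $g_1$ when $\psi(v,w)=g_2$ and $g_2$ when $\psi(v,w)=g_1$ (with $0$ encoding non-adjacency throughout); this is exactly the switching rule defining $\psi^\alpha$ in Definition \ref{def:gm-s}. This completes the verification of \eqref{eq:fonda}.

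Finally, to obtain $G$-cospectrality, I would observe that $Q_\alpha^2=I_{M_n(\mathbb C G)}$: the identity block squares to itself and each diagonal block satisfies $Q_{n_i}^2=1_{M_{n_i}(\mathbb C G)}$ by Lemma \ref{propQ}(1). As the entries of $Q_\alpha$ are real, hence complex, multiples of $1_G$, applying Lemma \ref{teo:preli} with $C=D=Q_\alpha$, $B=A_{(\Gamma,\psi)}$ and $A=A_{(\Gamma^\alpha,\psi^\alpha)}=Q_\alpha B Q_\alpha$ immediately gives that $(\Gamma,\psi)$ and $(\Gamma,\psi)^\alpha$ are $G$-cospectral. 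The delicate point will be the bookkeeping in the $C_0$ blocks: reconciling the group-algebra identity $\Psi_i(v)=\frac{n_i}{2}g_1+\frac{n_i}{2}g_2$ with the combinatorial ``half $g_1$, half $g_2$'' statement and checking that the convention $0$ for non-adjacency is respected in every case, including $g_1=g_2$ and $g_2=0$. The matrix manipulations themselves are entirely governed by Lemma \ref{propQ} and present no real difficulty.
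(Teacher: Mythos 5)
Your proposal is correct and follows essentially the same route as the paper's own proof: the same block decomposition of $Q_\alpha A_{(\Gamma,\psi)} Q_\alpha$, Lemma \ref{propQ}(2) for the blocks between parts $C_i,C_j$ with $i,j\geq 1$, Lemma \ref{propQ}(5) for the blocks involving $C_0$, and Lemma \ref{teo:preli} with $C=D=Q_\alpha$ for the $G$-cospectrality. Your additional details (the no-cancellation argument showing that $\Psi_i(v)=\frac{n_i}{2}g_1+\frac{n_i}{2}g_2$ really forces the ``half $g_1$, half $g_2$'' adjacency pattern, the constant column sums via $\psi(v,w)=\psi(w,v)^*$, and the explicit check that $Q_\alpha^2=I_{M_n(\mathbb{C}G)}$) are correct elaborations of points the paper leaves implicit, not a different method.
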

\begin{proof}
We can choose a suitable labeling of $V_\Gamma$ such that
$$
A_{(\Gamma, \psi)} = \left( \begin{array}{ccccc}
C_{0,0} & C_{0,1} & \ldots & C_{0,k-1} & C_{0,k} \\
    C_{0,1}^* & C_{1,1} & \ldots & C_{1, k-1} & C_{1,k} \\
    \vdots & \vdots & \ddots &  \vdots & \vdots \\
    C_{0,k-1}^* & C_{1, k-1}^* & \ldots & C_{k-1,k-1} & C_{k-1,k} \\
    C_{0,k}^* & C_{1, k}^* & \ldots & C_{k-1,k}^* & C_{k,k}
    \end{array}   \right)
$$
where, for all $i,j\in \{0,1,\ldots,k\}$, the block $C_{i,j}\in M_{n_i,n_j}(\mathbb{C}G)$ describes the adjacencies from vertices in $C_i$ to vertices in $C_j$, so that:
\begin{itemize}
\item the matrix $C_{0,0}$ satisfies $C_{0,0}^\ast = C_{0,0}$;
\item for each $i = 1, \ldots, k$, the matrix $C_{i,i}$ satisfies $C_{i,i}^\ast = C_{i,i}$, and it has constant row sum and constant column sum;
\item for each $i,j = 1, \ldots, k$ the matrix $C_{i,j}$ has constant row sum and constant column sum;
\item for each $i = 1, \ldots, k$, the rows of the matrix $C_{0,i}$ (or, equivalently, the columns of the matrix $C_{0,i}^\ast$) satisfy the property (5) of Lemma \ref{propQ}.
\end{itemize}
By virtue of Lemma \ref{propQ}, we obtain:    \footnotesize
\begin{equation*}
\begin{split}
    Q_\alpha A_{(\Gamma, \psi)}Q_\alpha &= \left( \begin{array}{ccccc} C_{0,0} & C_{0,1}Q_{n_1} & \ldots & C_{0,k-1}Q_{n_{k-1}} &C_{0,k}  Q_{n_k} \\
    Q_{n_1} C_{0,1}^* & Q_{n_1}C_{1,1}Q_{n_1} & \ldots & Q_{n_1}C_{1, k-1}Q_{n_{k-1}} & Q_{n_1}C_{1,k}Q_{n_k} \\
    \vdots & \vdots & \ddots &  \vdots & \vdots \\
    Q_{n_{k-1}} C_{0,k-1}^* & Q_{n_{k-1}}C_{1, k-1}^*Q_{n_1} & \ldots & Q_{n_{k-1}}C_{k-1,k-1}Q_{n_{k-1}} & Q_{n_{k-1}}C_{k-1,k}Q_{n_{k}} \\
    Q_{n_k} C_{0,k}^* & Q_{n_{k}} C_{1, k}^*Q_{n_1} & \ldots & Q_{n_{k}}C_{k-1,k}^*Q_{n_{k-1}} & Q_{n_{k}}C_{k,k}Q_{n_{k}}
    \end{array}\right) \\
        &= \left( \begin{array}{ccccc} C_{0,0} & C_{0,1}Q_{n_1} & \ldots & C_{0,k-1}Q_{n_{k-1}} & C_{0,k}Q_{n_k} \\
    Q_{n_1} C_{0,1}^* & C_{1,1} & \ldots & C_{1, k-1} & C_{1,k} \\
    \vdots & \vdots & \ddots &  \vdots & \vdots \\
    Q_{n_{k-1}} C_{0,k-1}^* & C_{1, k-1}^* & \ldots & C_{k-1,k-1} & C_{k-1,k} \\
    Q_{n_k} C_{0,k}^* & C_{1, k}^* & \ldots & C_{k-1,k}^* & C_{k,k}
   \end{array} \right)
\end{split}
\end{equation*}   \normalsize
where we have used the property (2) of Lemma \ref{propQ} to say that, for each $i,j=1,\ldots, k$, one has $Q_{n_i}C_{i, j}Q_{n_j}=C_{i,j}$. Finally, by using property (5) of Lemma \ref{propQ} and the definition of $(\Gamma, \psi)^\alpha$, we can conclude that $Q_\alpha A_{(\Gamma, \psi)}Q_\alpha = A_{(\Gamma^\alpha, \psi^\alpha)}$, since the matrix $C_{0,i}Q_{n_{i}}$ (together with its Hermitian transpose $Q_{n_{i}}C_{0,i}^\ast$) corresponds to the adjacencies between the part $C_0$ and the part $C_i$ of $(\Gamma, \psi)^\alpha$, for each $i=1,\ldots, k$. This completes the proof of Eq.~\eqref{eq:fonda}.
The last statement  is a direct consequence of Lemma \ref{teo:preli}, with $C=D=Q_\alpha$.
\end{proof}

Observe that, if $\alpha$ is a $G$-GM partition of $V_\Gamma$, then by Remark \ref{remarkpi0} $\alpha$ is also a good partition in the sense of the classical Godsil-McKay switching \cite{GoMc}. In particular, by applying the trivial representation $\pi_0$ to the members of Eq. \eqref{eq:fonda}, one recovers the cospectrality of the two graphs obtained with the classical construction. In some sense, the cospectrality produced by the $G$-Godsil-McKay construction implies the cospectrality of the underlying graphs according to the classical construction. Moreover, this remark suggests the possibility of investigating Eq. \eqref{eq:fonda} with respect to any other representation $\pi$ of $G$, by using the adjacency matrix with respect to that representation. This is exactly what we are going to do in Section \ref{sec:3}.

The next example shows that the presence of gains makes the situation very rich and complex: in fact, it may happen that the $G$-Godsil-McKay construction produces gain graphs whose underlying graphs are isomorphic, even if the corresponding $G$-cospectral gain graphs are not switching isomorphic, when regarded as gain graphs.

\begin{example}\label{exa:01}
In Example \ref{exa:0} we showed that $\alpha=\{C_0, C_1\}$ is a $G$-GM partition for the $\mathbb{T}$-gain graph  $(\Gamma,\psi) $ represented in  Fig.~\ref{fig:0}.
In Fig.~\ref{fig:0} also the $\mathbb{T}$-gain graph  $(\Gamma^\alpha, \psi^\alpha)$ is depicted, obtained according to Definition \ref{def:gm-s}.
\\ \indent By  Theorem \ref{G-Gods} the gain graphs $(\Gamma, \psi)$ and $(\Gamma^\alpha, \psi^\alpha)$ are $\mathbb{T}$-cospectral. Moreover, their underlying graphs $\Gamma$ and $\Gamma^\alpha$ are  isomorphic. However, $(\Gamma, \psi)$ and $(\Gamma^\alpha, \psi^\alpha)$ are not switching isomorphic as gain graphs, and so $\alpha$ is nontrivial.
\\ \indent We will prove it by contradiction. Let us assume that there exist a switching function $\xi\colon V_\Gamma \to \mathbb{T}$ and a graph automorphism $\phi\colon V_\Gamma \to V_{\Gamma^\alpha}$ such that
\begin{equation}\label{eq:ultima}
\xi(v_i)^{-1}\psi(v_i, v_{j})\xi(v_{j}) = \psi^\alpha(\phi(v_i), \phi(v_{j})) \quad \mbox{for any}\quad  v_i\sim v_j.
\end{equation}
Clearly it must be $\phi(v_0)=v_0$ and  both $\psi$ and $ \psi^\alpha$ take only value $1$ on pair of adjacent vertices in $C_1$. As a consequence of Eq. \eqref{eq:ultima} we have
$$1=\xi(v_1)^{-1}\xi(v_{2})=\xi(v_2)^{-1}\xi(v_{3})=\cdots=\xi(v_8)^{-1}\xi(v_{1}),$$
that is, $\xi$ must be constant on $C_1$. Put $t:= \xi(v_1)$. By considering Eq. \eqref{eq:ultima} for the pair of adjacent vertices $v_0,v_1$, we have
$$\xi(v_0)^{-1}t=\xi(v_0)^{-1}\psi(v_0, v_1)\xi(v_{1}) = \psi^\alpha(\phi(v_0), \phi(v_1))= \psi^\alpha(v_0, \phi(v_1))\in \{1,i\}.$$

 \indent Let us focus on the first case: if $\xi(v_0) = t$, then $\xi$ is a constant function and so we obtain that $\psi = \psi^\alpha \circ \phi$. Observe that an automorphism of $\Gamma$ acts on the set $\{v_1,\ldots, v_8\}$ as an element of the dihedral group $D_{16}$, which is the group of symmetries of an octagon. On the other hand, in order to preserve the gains, such a permutation should send $\{v_1, v_4, v_5, v_6\}$ to $\{v_2, v_3, v_7, v_8\}$. However, it is not difficult to see that no element of $D_{16}$ satisfies this property. This is a contradiction.\\
\indent Let us study the second case: $\xi(v_0)^{-1} t = i$. This implies that $t = i \xi(v_0)$ and so, by considering Eq. \eqref{eq:ultima} for the pair of adjacent vertices $v_0,v_2$, we obtain:
\begin{equation*}
\psi^\alpha(\phi(v_0), \phi(v_2)) = \xi(v_0)^{-1} \psi(v_0, v_2) t =  i^2 =  -1.
\end{equation*}
This is impossible, since $-1$ does not appear as a gain in $(\Gamma^\alpha,\psi^\alpha)$.
\end{example}

\section{Godsil-McKay switching with respect to a representation}\label{sec:3}
$G$-cospectrality is a very strong property, introduced in \cite{oncospe}, which implies cospectrality with respect to all representations. In the previous literature, only the notion of cospectrality for gain graphs with respect to some privileged representation of the gain group has been considered, rather than $G$-cospectrality.
For example, for signed graphs and complex unit gain graphs, the classical notion of spectrum and cospectrality is equivalent to that of $\pi_{id}$-cospectrality (see Section \ref{sec:4}), and two cospectral gain graphs  are not necessarily G-cospectral. As we will see in the next section, also for quaternion unit gain graphs the spectrum so far considered in the literature is that associated with a particular representation.        \\
\indent For this reason, it is convenient to have a more flexible routine working also for gain graphs with no $G$-GM partition, producing gain graphs that are cospectral at least with respect to some representations.       \\
\indent We will see two constructions: the first in Section \ref{sub1}, which is valid for any group $G$ and any unitary representation $\pi$; the second in Section \ref{sub2}, which is applicable only to those representations whose range contains $-I$ (which in the case of faithful representations implies the existence of a central involution in $G$). This second procedure generalizes Godsil-McKay switching for signed graphs and complex unit graphs \cite{cos,gm}.

\subsection{$\pi$-GM switching}\label{sub1}
Before describing this switching it is useful to make an observation on what happens when one considers representations that are not faithful.
\begin{remark}\label{rem:fedele}
Let $(\Gamma_1,\psi_1)$ and $(\Gamma_2,\psi_2)$ be two $G$-gain graphs and let $\pi$ be a unitary representation of $G$. Suppose that, for any pair of adjacent vertices $u$ and $v$, one has
$$
\psi_1(u,v)\psi_2(u,v)^{-1}\in \ker \pi.
$$
Then $(\Gamma_1,\psi_1)$ and $(\Gamma_2,\psi_2)$ are $\pi$-cospectral. In other words, if we modify $(\Gamma,\psi)$ by multiplying (on the left and/or on the right) some gains by elements in $\ker \pi$ (consistently for both orientations, in such a way that we still have a gain function), we do not change the $\pi$-spectrum: this is obvious because we actually do not change the represented adjacency matrix $\pi( A_{(\Gamma,\psi)})$.
\end{remark}

It is important to keep this observation in mind, because the switching  we are going to describe is not defined up to the operation of multiplying by elements of the kernel. On the contrary, a preliminary multiplication of this kind could even transform a gain graph not suitable for the switching  into one that is suitable.
\begin{definition}\label{def:pipar}
A partition $\alpha=\left\{C_0,C_1,\ldots, C_k \right\}$ of the vertex set $V_\Gamma$ of a gain graph $(\Gamma,\psi)$  is said to be a \emph{$\pi$-GM partition} if:
\begin{itemize}
\item
for every $i,j\in \{1,2,\ldots, k\}$ and for every $v,v'\in C_i$, one has
$$
\pi(\Psi_j(v))=\pi(\Psi_j(v'));
$$
\item for every $v\in C_0$, for every $i\in \{1,2,\ldots, k\}$, there exist $g_1,g_2\in G\cup \left\{0\right\}\subset \mathbb C G$ such that
$$
\Psi_i(v)=\frac{|C_i|}{2}\, g_1+\frac{|C_i|}{2}\, g_2.
$$
\end{itemize}
\end{definition}

The analogy with the Definition \ref{def:Gpartition} of a $G$-GM partition is clear and in fact a $G$-GM partition is a $\pi$-GM partition, for every representation $\pi$ of $G$.
But a $\pi$-GM partition is not necessarily a $G$-GM partition, as Example \ref{exa:pi1} shows. Moreover, a partition $\alpha$ is a $\pi_0$-GM partition, where $\pi_0$ is the trivial representation, if and only if it is a suitable partition for the classic Godsil-McKay switching of the underlying graph $\Gamma$.

\begin{example}\label{exa:pi1}
Let  $(\Gamma,\psi)$ be the $S_4$-gain graph depicted in Fig.~\ref{fig:1}, where $S_4$ is the symmetric group on $4$ elements. The usual rules apply: an edge without label represents an edge with trivial gain $1_{S_4}$, and when the gain is an involution the orientation of the associated edge is not specified.
Let $\pi_p \colon S_4\to U_4(\mathbb C)$ be the \emph{permutation representation}, associating with each permutation its canonical $\{0,1\}$-valued permutation matrix.
Let $\alpha=\{C_0,C_1\}$ be the partition with $C_0=\{v_1\}$ and $C_1=V_\Gamma\setminus C_0$.
At first observe that $v_1$ is adjacent to exactly half of the vertices of $C_1$, all with the same gain $1_{S_4}$. Then the second condition of Definition \ref{def:pipar} holds. Moreover:
\begin{align*}
&\Psi_1(v_2)=\Psi_1(v_3)=\Psi_1(v_4)=\Psi_1(v_5)=1_{S_4}+(1 2)(3 4)\\
& \Psi_1(v_6)=\Psi_1(v_7)=\Psi_1(v_8)=\Psi_1(v_9)=(1 2)+(3 4).
\end{align*}
This implies that $\alpha$ is not a $S_4$-GM partition. But
$$
\pi_p(1_{S_4}+(1 2)(3 4))= \begin{pmatrix}
1&1&0&0\\
1&1&0&0\\
0&0&1&1\\
0&0&1&1\\
\end{pmatrix}=\pi_p((1 2)+(3 4))
$$
and then $\alpha$ is $\pi_p$-GM. Let $\pi_s$ be the sign representation associating $sgn(\sigma)\in\{\pm1\}$ with any $\sigma\in S_4$ according to its parity: then the partition $\alpha$ is not a $\pi_s$-GM partition, since:
$$
\pi_s(\Psi_1(v_2))=\pi_s(1_{S_4}+(1 2)(3 4))=2,\qquad \pi_s(\Psi_1(v_6))= \pi_s((1 2)+(3 4))=-2.
$$
Finally, if $\pi_0$ denotes the trivial representation, the partition $\alpha$ is a $\pi_0$-GM partition, since the (underlying) graph induced by $C_1$ is regular and the unique vertex of $C_0$ is adjacent to exactly half of the vertices of $C_1$.

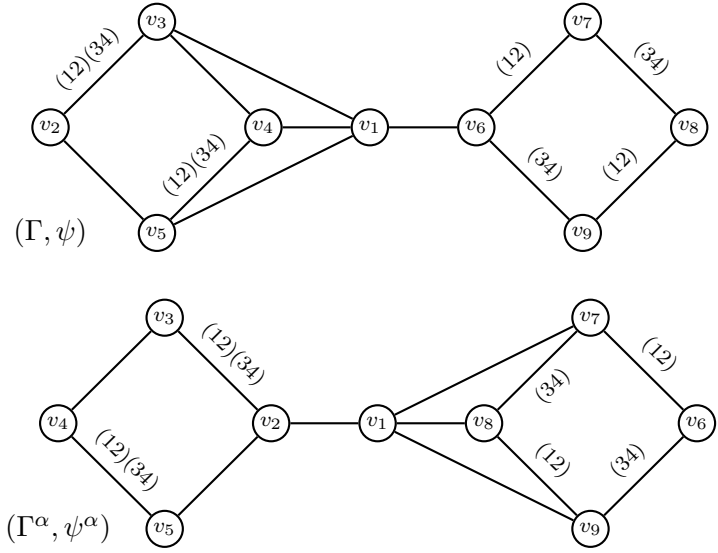
\begin{figure}
\centering
\begin{tikzpicture}[scale=0.7]
\node[vertex]  (1) at (0,0){\tiny $v_2$};
\node[vertex]  (2) at (2,2) {\tiny $v_3$};
\node[vertex]  (3) at (4,0) {\tiny $v_4$};
\node[vertex]  (4) at (2,-2) {\tiny $v_5$};
\node[vertex]  (5) at (6,0) {\tiny $v_1$};
\node[vertex]  (6) at (8,0){\tiny $v_6$};
\node[vertex]  (7) at (10,2) {\tiny $v_7$};
\node[vertex]  (8) at (12,0) {\tiny $v_8$};
\node[vertex]  (9) at (10,-2) {\tiny $v_9$};
\node  (finto) at (10,-3) {};
\node  (10) at (0,-2) {\small $(\Gamma,\psi)$};

\draw[edge] (1) -- (2) node[midway, above,sloped] {\tiny $(12)(34)$};
\draw[edge] (3) -- (4) node[midway, above, sloped] {\tiny $(12)(34)$};
\draw[edge] (3) -- (2);
\draw[edge] (1) -- (4);
\draw[edge] (2) -- (5);
\draw[edge] (3) -- (5);
\draw[edge] (4) -- (5);
\draw[edge] (6) -- (5);
\draw[edge] (6) -- (7) node[midway, above,sloped] {\tiny $(12)$};
\draw[edge] (8) -- (9) node[midway, above, sloped] {\tiny $(12)$};
\draw[edge] (8) -- (7) node[midway, above,sloped] {\tiny $(34)$};
\draw[edge] (6) -- (9) node[midway, above, sloped] {\tiny $(34)$};
\end{tikzpicture}
\centering
\begin{tikzpicture}[scale=0.7]
\node[vertex]  (1) at (4,0){\tiny $v_2$};
\node[vertex]  (2) at (2,2) {\tiny $v_3$};
\node[vertex]  (3) at (0,0) {\tiny $v_4$};
\node[vertex]  (4) at (2,-2) {\tiny $v_5$};
\node[vertex]  (5) at (6,0) {\tiny $v_1$};
\node[vertex]  (6) at (12,0){\tiny $v_6$};
\node[vertex]  (7) at (10,2) {\tiny $v_7$};
\node[vertex]  (8) at (8,0) {\tiny $v_8$};
\node[vertex]  (9) at (10,-2) {\tiny $v_9$};
\node  (10) at (0,-2) {\small  $(\Gamma^\alpha,\psi^\alpha)$};

\draw[edge] (1) -- (2) node[midway, above,sloped] {\tiny $(12)(34)$};
\draw[edge] (3) -- (4) node[midway, above, sloped] {\tiny $(12)(34)$};
\draw[edge] (3) -- (2);
\draw[edge] (1) -- (4);
\draw[edge] (1) -- (5);
\draw[edge] (7) -- (5);
\draw[edge] (8) -- (5);
\draw[edge] (9) -- (5);
\draw[edge] (6) -- (7) node[midway, above,sloped] {\tiny $(12)$};
\draw[edge] (8) -- (9) node[midway, above, sloped] {\tiny $(12)$};
\draw[edge] (8) -- (7) node[midway, below,sloped] {\tiny $(34)$};
\draw[edge] (6) -- (9) node[midway, above, sloped] {\tiny $(34)$};
\end{tikzpicture}
\caption{The $S_4$-gain graphs $(\Gamma,\psi)$ and $(\Gamma^\alpha,\psi^\alpha)$ of Examples \ref{exa:pi1} and \ref{exa:pi2}.}\label{fig:1}
\end{figure}
\end{example}

Even if $\alpha$ is a $\pi$-GM partition of a gain graph $(\Gamma,\psi)$, and not necessarily a $G$-GM partition, we can still construct   the matrix $Q_\alpha$ and the gain graph $(\Gamma^\alpha,\psi^\alpha)$ exactly as in Definition \ref{def:gm-s}, and the following theorem holds.

\begin{theorem}\label{lemma-Q-pi}
Let $(\Gamma,\psi)$ be a $G$-gain graph, let $\pi$ be a unitary representation of $G$, and let $\alpha$ be a $\pi$-GM partition.Then
$$
\pi(A_{(\Gamma^\alpha,\psi^\alpha)})=  \pi(Q_\alpha)  \pi(A_{(\Gamma,\psi)})  \pi(Q_\alpha).
$$
In particular, $(\Gamma,\psi)$ and $(\Gamma,\psi)^\alpha$ are $\pi$-cospectral.
\end{theorem}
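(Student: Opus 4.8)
The plan is to establish the displayed identity by applying the homomorphism $\pi$ to the block decomposition of $A_{(\Gamma,\psi)}$ used in the proof of Theorem~\ref{G-Gods}, and then to read off $\pi$-cospectrality from the fact that $\pi(Q_\alpha)$ is an involution. First I would record that $\pi\colon M_n(\mathbb{C}G)\to M_{nk}(\mathbb{C})$ is an algebra homomorphism (Proposition~\ref{productfou}), and that from the definition~\eqref{Qn} of $Q_n$ together with $\pi(1_G)=I_k$ one gets $\pi(Q_n)=Q_n^{\mathbb C}\otimes I_k$, where $Q_n^{\mathbb C}=\frac{2}{n}J_n-I_n$ is the scalar Godsil--McKay matrix of \cite{GoMc}. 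In particular $\pi(Q_\alpha)$ is block diagonal with blocks $I_{n_0k}$ and $Q_{n_i}^{\mathbb C}\otimes I_k$ (for $i\ge 1$), and $\pi(Q_\alpha)^2=\pi(Q_\alpha^2)=\pi(I_{M_n(\mathbb C G)})=I_{nk}$ by Lemma~\ref{propQ}(1).

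Next I would expand $\pi(Q_\alpha)\,\pi(A_{(\Gamma,\psi)})\,\pi(Q_\alpha)$ blockwise, keeping the same labeling and the same blocks $C_{i,j}\in M_{n_i,n_j}(\mathbb C G)$ as in Theorem~\ref{G-Gods}. The decisive new feature is that, under a merely $\pi$-GM partition, the blocks $C_{i,j}$ with $i,j\ge 1$ need no longer have constant row and column sums \emph{in} $\mathbb C G$, so the group-algebra identity $Q_{n_i}C_{i,j}Q_{n_j}=C_{i,j}$ of Theorem~\ref{G-Gods} genuinely fails and one cannot work at the group-algebra level as there. What does survive is a block analogue of Lemma~\ref{propQ}(2): if $Y\in M_{mk,nk}(\mathbb C)$ is regarded as an $m\times n$ array of $k\times k$ blocks with constant block-row sums and constant block-column sums, then
\[
(Q_m^{\mathbb C}\otimes I_k)\,Y\,(Q_n^{\mathbb C}\otimes I_k)=Y .
\]
This is proved by the same algebra as the scalar Lemma~\ref{propQ}(2), because the scalar entries of $Q_m^{\mathbb C}$ and $Q_n^{\mathbb C}$ commute with the $k\times k$ blocks of $Y$, and the compatibility of the two constant sums holds automatically since each equals the total block sum of $Y$.

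I would then verify the hypotheses of this block analogue for $Y=\pi(C_{i,j})$, $i,j\in\{1,\dots,k\}$. Its block-row sum at $v\in C_i$ is $\pi(\Psi_j(v))$, constant by the first condition of Definition~\ref{def:pipar}; its block-column sum at $w\in C_j$ equals $\pi\big((\Psi_i(w))^*\big)=\pi(\Psi_i(w))^*$, where I use $C_{i,j}^*=C_{j,i}$ together with $\pi(f^*)=\pi(f)^*$ for unitary $\pi$ (Proposition~\ref{productfou}), and this is again constant over $w\in C_j$ by the same first condition applied to the part $C_j$. Hence $\pi(Q_{n_i})\pi(C_{i,j})\pi(Q_{n_j})=\pi(C_{i,j})$. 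For the blocks joining $C_0$ to $C_i$, the second condition of Definition~\ref{def:pipar} coincides with that of the $G$-GM case and already holds in $\mathbb C G$, so property~(5) of Lemma~\ref{propQ} applies verbatim at the group-algebra level and gives $C_{0,i}Q_{n_i}=(A_{(\Gamma^\alpha,\psi^\alpha)})_{0,i}$ by Definition~\ref{def:gm-s}; applying $\pi$ transports this to the represented matrices, the $(i,0)$ blocks follow by Hermitian symmetry, and the $(0,0)$ block is unchanged since $\pi(Q_{n_0})=I$. Assembling the blocks yields $\pi(A_{(\Gamma^\alpha,\psi^\alpha)})=\pi(Q_\alpha)\pi(A_{(\Gamma,\psi)})\pi(Q_\alpha)$, and since $\pi(Q_\alpha)^2=I_{nk}$ this conjugates the Hermitian matrix $\pi(A_{(\Gamma,\psi)})$ into $\pi(A_{(\Gamma^\alpha,\psi^\alpha)})$, so the two have the same real spectrum and $(\Gamma,\psi)$ and $(\Gamma,\psi)^\alpha$ are $\pi$-cospectral.

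The step I expect to be the main obstacle is the column-sum bookkeeping for the blocks $\pi(C_{i,j})$: this is exactly where the relaxation from $G$-GM to $\pi$-GM bites, and it is what forces the argument to pass through the involution on $M_n(\mathbb C G)$ and to invoke the unitarity of $\pi$, rather than to remain at the level of $\mathbb C G$ as in Theorem~\ref{G-Gods}.
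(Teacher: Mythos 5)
Your proposal is correct, and while it follows the paper's overall skeleton, it handles the decisive step by a genuinely different mechanism. Both arguments apply the algebra homomorphism $\pi$ to the block decomposition, treat the $C_{0,i}$ blocks at the group-algebra level via property (5) of Lemma \ref{propQ} (since the second $\pi$-GM condition is unchanged from the $G$-GM one), and reduce the theorem to the identity $\pi(Q_{n_i}C_{i,j}Q_{n_j})=\pi(C_{i,j})$ for $i,j\ge 1$. You prove that identity on the represented side: you observe $\pi(Q_n)=\bigl(\tfrac{2}{n}J_n-I_n\bigr)\otimes I_k$ and establish a block (Kronecker) analogue of Lemma \ref{propQ}(2) for complex matrices with constant block-row and block-column sums, then check the hypotheses for $Y=\pi(C_{i,j})$: the block-row sums are $\pi(\Psi_j(v))$, constant by the first $\pi$-GM condition, and the block-column sums are $\pi(\Psi_i(w))^*$, constant by that same condition combined with unitarity. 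The paper instead never leaves $M_{n_i,n_j}(\mathbb{C}G)$ at this stage: it constructs an explicit correction matrix $D$, supported on the first row and first column, satisfying $\pi(D)=0$ and such that $C_{i,j}+D$ has constant row and column sums in $\mathbb{C}G$; it then applies the group-algebra Lemma \ref{propQ}(2) to $C_{i,j}+D$ and lets $\pi$ annihilate the $D$-terms. Your route avoids the somewhat ad hoc construction and verification of $D$ and makes the role of the $\pi$-GM hypothesis completely transparent, at the price of proving a new (routine) block lemma; the paper's route reuses its scalar-type lemma verbatim, at the price of the correction-matrix bookkeeping. Note that both proofs invoke unitarity at exactly the same point, namely the constancy of the represented column sums via $\pi(\Psi_i(w)^*)=\pi(\Psi_i(w))^*$; the paper does this implicitly when asserting $\pi(S/n_j)=\pi(\Psi_i(w_l)^*)$. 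One small slip in your write-up: the compatibility of the two constant block sums is not that ``each equals the total block sum,'' but that $n_i$ times the block-row sum and $n_j$ times the block-column sum both equal it; this corrected statement is what makes the cancellation in your block lemma go through, and it does not affect the correctness of your argument.
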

\begin{proof}
The extension of $\pi$ to $M_n(\mathbb C G)$ is a homomorphism, then one has  $\pi(Q_\alpha A_{(\Gamma,\psi)} Q_\alpha) = \pi(Q_\alpha)  \pi(A_{(\Gamma,\psi)})  \pi(Q_\alpha)$.
By definition of $Q_\alpha$ we have:      \footnotesize
$$
Q_\alpha A_{(\Gamma, \psi)}Q_\alpha =
\left( \begin{array}{ccccc} C_{0,0} & C_{0,1}Q_{n_1} & \ldots & C_{0,k-1}Q_{n_{k-1}} &C_{0,k}  Q_{n_k} \\
    Q_{n_1} C_{0,1}^* & Q_{n_1}C_{1,1}Q_{n_1} & \ldots & Q_{n_1}C_{1, k-1}Q_{n_{k-1}} & Q_{n_1}C_{1,k}Q_{n_k} \\
    \vdots & \vdots & \ddots &  \vdots & \vdots \\
    Q_{n_{k-1}} C_{0,k-1}^* & Q_{n_{k-1}}C_{1, k-1}^*Q_{n_1} & \ldots & Q_{n_{k-1}}C_{k-1,k-1}Q_{n_{k-1}} & Q_{n_{k-1}}C_{k-1,k}Q_{n_{k}} \\
    Q_{n_k} C_{0,k}^* & Q_{n_{k}} C_{1, k}^*Q_{n_1} & \ldots & Q_{n_{k}}C_{k-1,k}^*Q_{n_{k-1}} & Q_{n_{k}}C_{k,k}Q_{n_{k}}
    \end{array}\right).
$$      \normalsize
Notice that, as in the proof of Theorem \ref{G-Gods}, by virtue of property (5) of Lemma \ref{propQ},
the matrix $C_{0,i}Q_{n_{i}}$ (together with its Hermitian transpose $Q_{n_{i}}C_{0,i}^\ast$) corresponds to the adjacencies between the part $C_0$ and the part $C_i$ of $(\Gamma, \psi)^\alpha$, for each $i=1,\ldots, k$.
By construction of $(\Gamma^\alpha,\psi^\alpha)$ the adjacencies and the gains of the parts $C_1,\ldots, C_k$ are the same as in  $(\Gamma, \psi)$ and then  in order to get the statement we only need to prove that
$$
\pi(Q_{n_i} C_{i,j} Q_{n_j}) = \pi(C_{i,j}),\qquad \forall i,j\in\{1,2,\ldots,k\}.
$$
Let us set $C_i=\{v_1,\ldots v_{n_i} \}$ and $C_j=\{w_1,\ldots w_{n_j}\}$ in such a way that the element in position $h,l$ of the block $C_{i,j}$ is $\psi(v_h, w_l)$ if $v_h$ and $w_l$ are adjacent, and $0$ otherwise.\\
Notice that the sum of the elements of the $h$-th row of $C_{i,j}$ is $\Psi_j(v_h)$, and the sum along the $l$-th column is $\Psi_i(w_l)^\ast$.\\
Differently from what happens for a $G$-GM partition, the row sums and the column sums are not constant as elements of $\mathbb C G$, but the first  property of Definition \ref{def:pipar} ensures that the possible differences all live in the kernel of the extension of $\pi$ to $\mathbb C G$. Therefore $C_{i,j}$ can be written as the sum of a constant row and column sum matrix with a matrix in the kernel of the extension of $\pi$ to $M_{n_i,n_j}( \mathbb C G)$.
Let us denote by $S\in \mathbb C G$ the sum of all entries of $C_{i,j}$. Let us define $D\in M_{n_i,n_j}(\mathbb{C}G)$ by:
\begin{equation*}
\begin{split}
D_{h,l}&=
\begin{cases}
 \frac{S}{n_i}-\Psi_j(v_h) &\mbox{ if } h\neq 1 \mbox{ and } l= 1\\
 \frac{S}{n_j}-\Psi_i(w_l)^\ast &\mbox{ if } h=1 \mbox{ and } l\neq 1\\
 \frac{S}{n_i}+ \frac{S}{n_j}-\Psi_j(v_1)-\Psi_i(w_1)^\ast &\mbox{ if } h= 1 \mbox{ and } l= 1\\
0 &\mbox{ otherwise.}\\
\end{cases}\\&=
\left( \begin{array}{cccc}
 \frac{S}{n_i}+ \frac{S}{n_j}-\Psi_j(v_1)-\Psi_i(w_1)^\ast&  \frac{S}{n_j}-\Psi_i(w_2)^\ast &\cdots & \frac{S}{n_j}-\Psi_i(w_{n_j})^\ast
\\
 \frac{S}{n_i}-\Psi_j(v_2) &0 &\cdots & 0
\\
\vdots & \vdots & \ddots & \vdots\\
 \frac{S}{n_i}-\Psi_j(v_{n_i}) &0 &\cdots & 0
    \end{array}\right).
\end{split}
\end{equation*}
It is easy to check that actually $\pi(D)=0$, since  by the first property of Definition \ref{def:pipar} one has
$$
\pi\left(\frac{S}{n_i}\right)=\pi\left(\Psi_j(v_h)\right),\qquad \pi\left(\frac{S}{n_j}\right)=\pi\left(\Psi_i(w_l)^\ast\right),\quad  \forall h\in\{1,\ldots, n_i\},l\in\{1,\ldots, n_j\}.
$$
Let us prove that the matrix $C_{i,j}+D$ has constant row sums and constant column sums in $\mathbb CG$.
Clearly for $h>1$ and $l>1$, the sum of the elements of the $h$-th row  of $C_{i,j}+D$ is  $\frac{S}{n_i}$ and the sum of the elements of the $l$-th column of $C_{i,j}+D$ is  $\frac{S}{n_j}$. Let us check that the same is true for the first row and the first column:
\begin{equation*}
\begin{split}
\sum_{l=1}^{n_j} \left( C_{i,j}+D   \right)_{1,l}&= \sum_{l=1}^{n_j} \left( C_{i,j}\right)_{1,l} + \frac{S}{n_i}+ \frac{S}{n_j}-\Psi_j(v_1)-\Psi_i(w_1)^\ast + \sum_{l=2}^{n_j}\left(  \frac{S}{n_j}-\Psi_i(w_l)^\ast\right)\\
&=\Psi_j(v_1) + \frac{S}{n_i}+ \frac{S}{n_j}-\Psi_j(v_1)-\Psi_i(w_1)^\ast + \frac{n_j-1}{n_j} S- \sum_{l=2}^{n_j} \Psi_i(w_l)^\ast\\
&=  \frac{S}{n_i}+ S-S=\frac{S}{n_i},
\end{split}
\end{equation*}
and similarly
\begin{equation*}
\begin{split}
\sum_{h=1}^{n_i} \left( C_{i,j}+D   \right)_{h,1}&= \sum_{h=1}^{n_i} \left( C_{i,j}\right)_{h,1} + \frac{S}{n_i}+ \frac{S}{n_j}-\Psi_j(v_1)-\Psi_i(w_1)^\ast + \sum_{h=2}^{n_i}\left(  \frac{S}{n_i}-\Psi_j(v_h)\right)\\
&=\Psi_i(w_1)^\ast + \frac{S}{n_i}+ \frac{S}{n_j}-\Psi_j(v_1)-\Psi_i(w_1)^\ast + \frac{n_i-1}{n_i} S- \sum_{h=2}^{n_i} \Psi_j(v_h)\\
&=  \frac{S}{n_j}+ S-S=\frac{S}{n_j}.
\end{split}
\end{equation*}
Now using property (2) of Lemma \ref{propQ} and the property of the extension of $\pi$ (see \cite[Lemma 5.1]{GLine}) we can conclude the proof:
\begin{equation*}
\begin{split}
\pi(   Q_{n_i} C_{i,j} Q_{n_j}) &= \pi(   Q_{n_i} \left(C_{i,j}+D- D\right) Q_{n_j})= \pi(   Q_{n_i} \left(C_{i,j}+D\right) Q_{n_j}-   Q_{n_i}D Q_{n_j})
\\&=  \pi(   C_{i,j}+D-   Q_{n_i}D Q_{n_j})=\pi(C_{i,j}) +0 - \pi(Q_{n_i})\,0\,\pi(Q_{n_j})=\pi(C_{i,j}).
\end{split}
\end{equation*}
\end{proof}

\begin{example}\label{exa:pi2}
In Example \ref{exa:pi1} we presented an $S_4$-gain graph $(\Gamma,\psi)$ and a $\pi_p$-GM partition $\alpha$ of its vertex set. In Fig.~\ref{fig:1} the associated gain graph $(\Gamma^\alpha,\psi^\alpha)$
  is depicted. By an explicit computation one can check that  the $36\times 36$ complex matrices $\pi_p( A_{(\Gamma,\psi)})$ and $\pi_p( A_{(\Gamma^\alpha,\psi^\alpha)})$ are cospectral and so $(\Gamma,\psi)$ and $(\Gamma^\alpha,\psi^\alpha)$ are $\pi_p$-cospectral. However, it is possible to see that $\pi_s( A_{(\Gamma,\psi)})$ and $\pi_s( A_{(\Gamma^\alpha,\psi^\alpha)})$ are not cospectral and so
 $(\Gamma,\psi)$ and $(\Gamma^\alpha,\psi^\alpha)$ are not $\pi_s$-cospectral. Finally, notice that the underlying graphs $\Gamma$ and $\Gamma^\alpha$ are cospectral (actually, they are isomorphic),
 as we expected since the partition $\alpha$ is also a $\pi_0$-GM partition.
\end{example}

\begin{remark}\label{rem:ker2}
Notice that for a partition $\alpha=\{C_0,C_1,\ldots,C_k\}$ the second condition of Definition \ref{def:pipar} is the same as in Definition \ref{def:Gpartition}: for any $v\in C_0$ and any $i\in\{1,\ldots,k\}$, if there are edges from $v$ to $C_i$ then
half of the vertices of $C_i$ are adjacent to $v$ with a gain $g_1\in G$ and the other half are adjacent to $v$ with a gain $g_2\in G$ or are not adjacent at all.
Actually, we may weaken this condition by asking that half of the vertices have gain with the same image via $\pi$: this would make sense if $\pi$ were not faithful.  Anyway elements in $G$ with the same image via $\pi$
are equal up to multiplication by elements of $\ker \pi$, this  means that this generalization would produce nothing that could not be achieved by combining  the switching with the operation described in Remark \ref{rem:fedele} of multiplication of some gains by some  elements of $\ker \pi$.
\end{remark}

\begin{example}\label{exa:pi3}
In Fig.~\ref{fig:3} four $S_4$-gain graphs are depicted. We consider  the partition $\alpha=\{C_0,C_1\}$ on $\Gamma$, with $C_0=\{v_7\}$ and $C_1=V_\Gamma\setminus \{v_7\}$, and the sign representation $\pi_s$ of $S_4$. The partition $\alpha$ is not $\pi_s$-GM for  $(\Gamma,\psi_1)$, because the second condition of Definition \ref{def:pipar} is not satisfied.
In the light of Remark \ref{rem:ker2} one can notice that $v_7$ is adjacent to half of the vertices of $C_1$ with gains having  the same image equal to $1$ via $\pi_s$.
Then one can consider the gain graph
 $(\Gamma,\psi_2)$ that is $\pi_s$-cospectral with $(\Gamma,\psi_1)$ by Remark  \ref{rem:fedele}, since it is obtained from $(\Gamma,\psi_1)$ by multiplying one edge by an element in $\ker \pi_s$.
 Now it is easy to check that $\alpha$ is $\pi_s$-GM for  $(\Gamma,\psi_2)$ and then $(\Gamma,\psi_2)$ and $(\Gamma^\alpha,\psi_2^\alpha)$ are $\pi_s$-cospectral. Finally we can again multiply some edges by elements in $\ker \pi_s$ obtaining $(\Gamma^\alpha,\psi_3)$, that by Remark  \ref{rem:fedele} is $\pi_s$-cospectral with $(\Gamma^\alpha,\psi_2^\alpha)$. Finally, by transitivity, the gain graphs $(\Gamma,\psi_1)$ and $(\Gamma^\alpha,\psi_3)$ are $\pi_s$-cospectral.
\begin{figure}
\centering
\begin{tikzpicture}[scale=1.5]


\node[vertex]  (1a) at (-2.7,0){\tiny $v_1$};

\node[vertex]  (2a) at (-2.7,2){\tiny $v_2$};
\node[vertex]  (4a) at (2-2.7,0){\tiny $v_4$};

\node[vertex]  (3a) at (2-2.7,2){\tiny $v_3$};

\node[vertex]  (5a) at (1-2.7,1.5){\tiny $v_5$};

\node[vertex]  (6a) at (1.5-2.7,1){\tiny $v_6$};

\node[vertex]  (7a) at (1-2.7,1){\tiny $v_7$};


\draw[dedge] (4a) -- (3a) node[midway, sloped, above,rotate=180] {\tiny $(123)$};

\draw[edge] (1a) -- (2a);

\draw[edge] (2a) -- (3a)node[midway, above,sloped] {\tiny $(12)$};

\draw[edge] (4a) -- (1a)node[midway, below,sloped] {\tiny $(13)$};

\draw[edge] (1a) -- (7a)node[midway, above,sloped] {\tiny $(12)(34)$};
\draw[edge] (7a) -- (5a);
\draw[edge] (7a) -- (6a);


\node[vertex]  (1) at (0,0){\tiny $v_1$};

\node[vertex]  (2) at (0,2){\tiny $v_2$};

\node[vertex]  (4) at (2,0){\tiny $v_4$};

\node[vertex]  (3) at (2,2){\tiny $v_3$};

\node[vertex]  (5) at (1,1.5){\tiny $v_5$};

\node[vertex]  (6) at (1.5,1){\tiny $v_6$};

\node[vertex]  (7) at (1,1){\tiny $v_7$};


\draw[edge] (1) -- (2);

\draw[edge] (2) -- (3)node[midway, above,sloped] {\tiny $(12)$};

\draw[dedge] (4) -- (3)node[midway, above,sloped] {\tiny $(123)$};

\draw[edge] (4) -- (1)node[midway, below,sloped] {\tiny $(13)$};

\draw[edge] (1) -- (7);
\draw[edge] (7) -- (5);
\draw[edge] (7) -- (6);


\node[vertex]  (1) at (0+2.7,0){\tiny $v_1$};

\node[vertex]  (2) at (0+2.7,2){\tiny $v_2$};

\node[vertex]  (4) at (2+2.7,0){\tiny $v_4$};

\node[vertex]  (3) at (2+2.7,2){\tiny $v_3$};

\node[vertex]  (5) at (1+2.7,1.5){\tiny $v_5$};

\node[vertex]  (6) at (1.5+2.7,1){\tiny $v_6$};

\node[vertex]  (7) at (1+2.7,1){\tiny $v_7$};


\draw[edge] (1) -- (2);

\draw[edge] (2) -- (3)node[midway, above,sloped] {\tiny $(12)$};

\draw[dedge] (4) -- (3)node[midway, above,sloped] {\tiny $(123)$};

\draw[edge] (4) -- (1)node[midway, below,sloped] {\tiny $(13)$};

\draw[edge] (7) -- (2);
\draw[edge] (7) -- (3);
\draw[edge] (7) -- (4);


\node[vertex]  (1) at (0+5.4,0){\tiny $v_1$};

\node[vertex]  (2) at (0+5.4,2){\tiny $v_2$};

\node[vertex]  (4) at (2+5.4,0){\tiny $v_4$};

\node[vertex]  (3) at (2+5.4,2){\tiny $v_3$};

\node[vertex]  (5) at (1+5.4,1.5){\tiny $v_5$};

\node[vertex]  (6) at (1.5+5.4,1){\tiny $v_6$};

\node[vertex]  (7) at (1+5.4,1){\tiny $v_7$};


\draw[edge] (1) -- (2);

\draw[edge] (2) -- (3)node[midway, above,sloped] {\tiny $(12)$};

\draw[dedge] (4) -- (3)node[midway, above,sloped] {\tiny $(143)$};

\draw[edge] (4) -- (1)node[midway, below,sloped] {\tiny $(13)$};

\draw[edge] (7) -- (2)node[midway, below,sloped] {\tiny $(12)(34)$};
\draw[edge] (7) -- (3);
\draw[edge] (7) -- (4)node[midway, below,sloped] {\tiny $(12)(34)$};
\end{tikzpicture}
\caption{From left to right, the $S_4$-gain graphs $(\Gamma,\psi_1)$, $(\Gamma,\psi_2)$, $(\Gamma^\alpha,\psi_2^\alpha)$ and $(\Gamma^\alpha,\psi_3)$
of Example \ref{exa:pi3}.}\label{fig:3}
\end{figure}
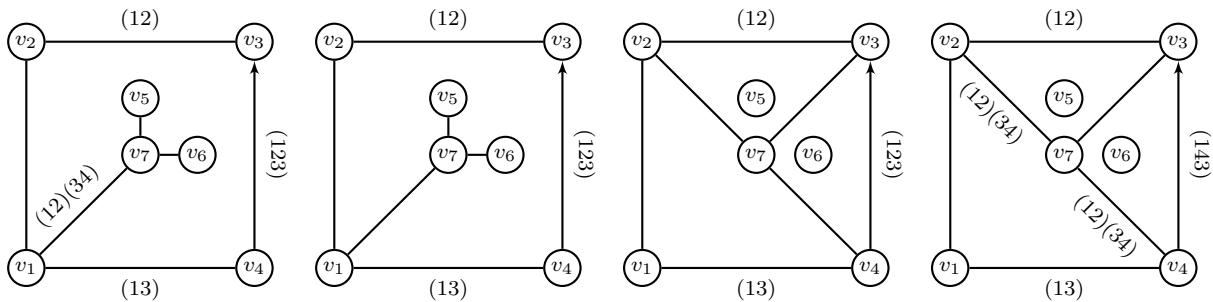
\end{example}

If two gain graphs are cospectral with respect to some unitary representations, then they are cospectral with respect to any direct sum of those representations, while the converse is, in general, false  (\cite[Example~5.4]{oncospe}). On the other hand, for the cospectrality obtained via the by GM-switching, both the implications hold.
\begin{proposition}\label{prop:somma}
Let $(\Gamma,\psi)$ be a $G$-gain graph and let $\pi_1,\pi_2$ be unitary representations of $G$. A partition $\alpha$ is a $\pi_1$-GM partition and a $\pi_2$-GM partition for $(\Gamma,\psi)$ if and only if  it is
a $(\pi_1\oplus \pi_2)$-GM partition for $(\Gamma,\psi)$.
\end{proposition}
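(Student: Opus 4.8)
The plan is to separate the two defining conditions of a $\pi$-GM partition (Definition~\ref{def:pipar}) and to observe that only the first one depends on the representation. The second condition --- the existence, for every $v\in C_0$ and every $i\in\{1,\ldots,k\}$, of elements $g_1,g_2\in G\cup\{0\}$ with $\Psi_i(v)=\frac{|C_i|}{2}g_1+\frac{|C_i|}{2}g_2$ --- makes no reference to $\pi$ at all. It is therefore satisfied, or violated, uniformly for $\pi_1$, $\pi_2$, and $\pi_1\oplus\pi_2$, and contributes nothing to the equivalence.

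The heart of the matter is the first condition, which by linearity of the extension in Eq.~\eqref{star1} I would rephrase as: $\alpha$ satisfies it for a representation $\pi$ precisely when $\pi\!\left(\Psi_j(v)-\Psi_j(v')\right)=0$ for all $i,j\in\{1,\ldots,k\}$ and all $v,v'\in C_i$. The key algebraic fact to invoke is the block-diagonal form of a direct sum: extending the defining relation of $\pi_1\oplus\pi_2$ from group elements to the whole group algebra by linearity, one gets, for every $f\in\mathbb{C}G$,
$$
(\pi_1\oplus\pi_2)(f)=\begin{pmatrix}\pi_1(f)&0\\0&\pi_2(f)\end{pmatrix},
$$
so that $(\pi_1\oplus\pi_2)(f)=0$ if and only if both $\pi_1(f)=0$ and $\pi_2(f)=0$. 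Applying this with $f=\Psi_j(v)-\Psi_j(v')$ for each admissible triple shows at once that the first condition holds for $\pi_1\oplus\pi_2$ exactly when it holds for both $\pi_1$ and $\pi_2$; combined with the representation-independence of the second condition, this gives the stated equivalence.

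I do not anticipate a genuine obstacle: the result is essentially a restatement of the block-diagonal structure of a direct-sum representation. The only point deserving a line of care is that this block form, recorded in the Preliminaries only for group elements $g\in G$, extends verbatim to all of $\mathbb{C}G$ --- but this is immediate from the linear extension defined in Eq.~\eqref{star1}.
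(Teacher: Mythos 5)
Your proposal is correct and follows essentially the same route as the paper's proof: both reduce the claim to the representation-independence of the second condition plus the fact that the direct sum acts block-diagonally on $\mathbb{C}G$, so that equality (equivalently, vanishing of the difference) of block-diagonal matrices holds exactly when it holds blockwise. Your reformulation via $\pi\bigl(\Psi_j(v)-\Psi_j(v')\bigr)=0$ is a cosmetic variant of the paper's statement $A_1=A_2$ and $B_1=B_2 \iff A_1\oplus B_1=A_2\oplus B_2$.
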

\begin{proof}
The second property of Definition \ref{def:pipar} does not depend on the representation. Therefore, in order to prove the claim, it is enough to show that, for a partition $\alpha=\left\{C_0,C_1,\ldots, C_k \right\}$, the property
$$
 \pi_1(\Psi_j(v))=\pi_1(\Psi_j(v')) \mbox{ and } \pi_2(\Psi_j(v))=\pi_2(\Psi_j(v')),\quad \forall i,j\in \{1,2,\ldots, k\}, \forall v,v'\in C_i
$$
is equivalent to
$$
(\pi_1\oplus \pi_2) (\Psi_j(v))=(\pi_1\oplus \pi_2) (\Psi_j(v')),\quad \forall i,j\in \{1,2,\ldots, k\}, \forall v,v'\in C_i.$$
This is true since merely at a matrix level one has
$$
A_1=A_2 \mbox{ and }  B_1=B_2  \iff A_1\oplus B_1=A_2\oplus B_2.
$$
\end{proof}
In the light of Proposition \ref{prop:somma} it is not reductive to restrict, if necessary, to irreducible representations.

We have already noticed that a $G$-GM partition is $\pi$-GM for every unitary representation. At least when the group $G$ is finite, we know from Theorem \ref{teo:cosp} that
$G$-cospectrality is equivalent to $\pi$-cospectrality for all (irreducible)  representations of $G$.

The next theorem implies that for the cospectrality obtained by the GM-switchings the analogous result holds; moreover, it is enough to look at the regular representation $\lambda_G$ (which is not true for the cospectrality in general instead  \cite[Remark~4.15]{oncospe}).

\begin{theorem} \label{thm:Marcovaldo}
Let $G$ be a finite group and let $\pi_0,\ldots,\pi_{m-1}$ be a complete system of irreducible representations of $G$. Let
$(\Gamma,\psi)$ be a $G$-gain graph and let $\alpha$ be a partition of the vertex set $V_\Gamma$. The following are equivalent.
\begin{enumerate}
\item $\alpha$ is a $G$-GM partition;
\item  $\alpha$ is a $\pi$-GM partition for every representation $\pi$;
\item  $\alpha$ is a $\pi_i$-GM partition for each $i=0,\ldots, m-1$;
\item  $\alpha$ is a $\lambda_G$-GM partition.
\end{enumerate}
\end{theorem}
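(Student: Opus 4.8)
The plan is to prove the cyclic chain of implications $(1)\Rightarrow(2)\Rightarrow(3)\Rightarrow(4)\Rightarrow(1)$. The observation that organizes everything is that the second bullet in Definitions \ref{def:Gpartition} and \ref{def:pipar} is literally the same condition and does not involve any representation; hence in all four statements this second requirement holds simultaneously or fails simultaneously, and the entire equivalence reduces to comparing the first conditions, that is, statements about the equalities among the elements $\Psi_j(v)\in\mathbb{C}G$ (either in the group algebra itself, or after applying various representations).

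The three comparatively easy implications are as follows. For $(1)\Rightarrow(2)$, if $\Psi_j(v)=\Psi_j(v')$ holds in $\mathbb{C}G$ for all $v,v'\in C_i$, then applying any representation $\pi$ extended to $\mathbb{C}G$ immediately yields $\pi(\Psi_j(v))=\pi(\Psi_j(v'))$, so $\alpha$ is $\pi$-GM. The implication $(2)\Rightarrow(3)$ is trivial, since each irreducible $\pi_i$ is in particular a representation of $G$. For $(3)\Rightarrow(4)$, I would first record that being a $\pi$-GM partition depends only on the equivalence class of $\pi$: if $\pi'\sim\pi$ with $\pi'(g)=S^{-1}\pi(g)S$, the same relation extends to $\mathbb{C}G$, so $\pi'(\Psi_j(v))=\pi'(\Psi_j(v'))$ if and only if $\pi(\Psi_j(v))=\pi(\Psi_j(v'))$. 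Then, iterating Proposition \ref{prop:somma}, from the hypothesis that $\alpha$ is $\pi_i$-GM for every $i$ one deduces that $\alpha$ is $\left(\bigoplus_{i=0}^{m-1}\pi_i^{\oplus\deg\pi_i}\right)$-GM; since $\lambda_G\sim\bigoplus_{i=0}^{m-1}\pi_i^{\oplus\deg\pi_i}$, equivalence-invariance gives that $\alpha$ is $\lambda_G$-GM.

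The crucial step is $(4)\Rightarrow(1)$, and this is where I expect the real content to lie. Here one uses that the regular representation is faithful in the strong sense of Proposition \ref{prop:regolareiniettiva}, namely that the extended map $\lambda_G\colon\mathbb{C}G\to M_{|G|}(\mathbb{C})$ is \emph{injective}. Consequently the equality $\lambda_G(\Psi_j(v))=\lambda_G(\Psi_j(v'))$ guaranteed by the $\lambda_G$-GM hypothesis forces $\Psi_j(v)=\Psi_j(v')$ already in $\mathbb{C}G$, which is precisely the first condition of a $G$-GM partition; together with the representation-free second condition, this shows that $\alpha$ is $G$-GM and closes the cycle. The main obstacle is exactly this last implication: unlike a generic faithful representation, whose extension to $\mathbb{C}G$ may have nontrivial kernel (as illustrated in the excerpt immediately before Proposition \ref{prop:regolareiniettiva}), the regular representation detects every nonzero element of the group algebra, and it is precisely this injectivity that upgrades the weaker $\lambda_G$-level equality to genuine equality in $\mathbb{C}G$.
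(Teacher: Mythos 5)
Your proof is correct and follows essentially the same route as the paper: the same cycle of implications, with $(3)\Rightarrow(4)$ resting on Proposition \ref{prop:somma} and the crucial step $(4)\Rightarrow(1)$ resting on the injectivity of the extension of $\lambda_G$ to $\mathbb{C}G$ from Proposition \ref{prop:regolareiniettiva}. Your one refinement over the paper's write-up is making explicit the equivalence-invariance of the $\pi$-GM property, which is indeed needed since $\lambda_G$ is only \emph{equivalent} to $\bigoplus_{i}\pi_i^{\oplus\deg\pi_i}$, but this detail is implicit in the paper's appeal to Proposition \ref{prop:somma}.
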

\begin{proof}
Implications (1)$\implies$ (2)$\implies$ (3) are obvious; implication (3)$\implies$ (4) follows from Proposition \ref{prop:somma}.\\
Let us prove implication (4)$\implies$ (1).
Suppose that $\alpha=\{C_0,\ldots, C_k\}$ is a $\lambda_G$-GM partition. The second condition of Definition \ref{def:pipar} is equal to that of Definition  \ref{def:Gpartition}, then we only need to prove that
$$
\Psi_j(v)=\Psi_j(v'),\quad \forall i,j\in \{1,2,\ldots, k\}, \forall v,v'\in C_i
$$
by assuming that
$$
\lambda_G(\Psi_j(v))=\lambda_G(\Psi_j(v')),\quad \forall i,j\in \{1,2,\ldots, k\}, \forall v,v'\in C_i.
$$
By Proposition \ref{prop:regolareiniettiva} the extension of  $\lambda_G$ to $\mathbb C G $ is injective and the claim follows.
\end{proof}

It follows that $G$-GM switching and $\lambda_G$-GM switching are the same. It is worth noting  that this is not true for any faithful representation: we have already seen in the Example \ref{exa:pi1} that a partition may be a $\pi$-GM partition, with $\pi$ faithful, without being a $G$-GM partition.

\subsection{$\pi$-GM switching when $G$ has a central involution}\label{sub2}
In this section we consider a gain group $G$ and a representation $\pi$ for which there exists an element $s\in G$ such that $\pi(s)=-I$. Notice that if the representation $\pi$ is faithful, the element $s$ must be a central involution of $G$. When such an element exists, we can further weaken the condition for a partition to be a $\pi$-GM partition. Therefore we can update our definition.
\begin{definition}\label{def:updated}
A partition $\alpha=\left\{C_0,C_1,\ldots, C_k \right\}$ of the vertex set $V_\Gamma$ of a gain graph $(\Gamma,\psi)$  is said to be a \emph{$\pi$-GM partition} if:
\begin{itemize}
\item for every  $i,j\in \{1,2,\ldots, k\}$ and for every $v,v'\in C_i$, one has $$\pi(\Psi_j(v))=\pi(\Psi_j(v'));$$
\item for every $v\in C_0$, for every $i\in \{1,2,\ldots, k\}$, at least one of the following conditions holds:
\begin{enumerate}
\item there exist $g_1,g_2\in G\cup \left\{0\right\}\subset \mathbb C G$ such that
$\Psi_i(v)=\frac{|C_i|}{2}\, g_1+\frac{|C_i|}{2}\, g_2,$
\item there exists $s\in G$ such that $\pi(s)=-I$ and $\pi(\Psi_i(v))=0$.
\end{enumerate}
\end{itemize}
\end{definition}

\begin{example}\label{exa:diedrale1}
Let
$$
D_8=\langle a, b| a^4=b^2=1_{D_8},  bab=a^{-1} \rangle
$$
be the \emph{Dihedral group} of order $8$. Recall that the elements of $D_8$ can be regarded as the symmetries of a square: in particular, $a$ is a rotation of $\pi/2$ and $b$ is a reflection. Let us denote by $\pi_2$ the representation of $D_8$ of degree $2$ such that
$$
\pi_2(a)= \begin{pmatrix}
0&-1\\
1&0\\\end{pmatrix},\qquad\pi_2(b)= \begin{pmatrix}
1&0\\
0&-1\\\end{pmatrix}.
$$
Observe that $\pi_2$ is a faithful representation, and that the element $a^2$ is a central involution such that $\pi_2(a^2) = -I$.
On the left of Fig.~\ref{fig:diedrale1} the $D_8$-gain graph $(\Gamma,\psi)$ is depicted. Let us consider the partition $\alpha=\{C_0,C_1,C_2\}$ with $C_0=\{v_7,v_8\}$, $C_1=\{v_1,v_2, v_3,v_4\}$ and $C_2=\{v_5,v_6\}.$ \\ One has
$$\Psi_1(v_i)=
\begin{cases}
2\cdot1_{D_8}  &\mbox{ if } i\in \{1,2,3,4,8\}\\
 a+a^3 &\mbox{ if } i\in \{5,6,7\}\\
\end{cases}\qquad
\Psi_2(v_i)=
\begin{cases}
0  &\mbox{ if } i\in \{1,3,5,6,7,8\}\\
 a+a^3 &\mbox{ if } i\in \{2,4\}.\\
\end{cases}
$$
The partition $\alpha$ is not a $\pi_2$-GM partition with respect to the definition given in Section \ref{sub1}: one can check that the only obstruction is that
 the second condition of Definition \ref{def:pipar} does not hold for $v_7$. But we have
$$
\pi_2(\Psi_1(v_7))=\begin{pmatrix}
0&-1\\
1&0\\\end{pmatrix}+\begin{pmatrix}
0&-1\\
1&0\\\end{pmatrix}^3=\begin{pmatrix}
0&0\\
0&0\\\end{pmatrix},
$$
and then $\alpha$ is a $\pi_2$-GM partition according to Definition \ref{def:updated}.

\begin{figure}
\centering

\begin{tikzpicture}[scale=1.4]


\node[vertex]  (1) at (0,0){\tiny $v_1$};

\node[vertex]  (2) at (0,2){\tiny $v_2$};

\node[vertex]  (3) at (2,2){\tiny $v_3$};

\node[vertex]  (4) at (2,0){\tiny $v_4$};

\node[vertex]  (5) at (3.5,2){\tiny $v_7$};

\node[vertex]  (6) at (2,3.5){\tiny $v_8$};

\node[vertex]  (7) at (1.25,1.25){\tiny $v_5$};

\node[vertex]  (8) at (0.75, 0.75){\tiny $v_{6}$};

\node  (10) at (0.5,3.3) {\small $(\Gamma,\psi)$};


\draw[edge] (1) -- (2);
\draw[edge] (2) -- (3);
\draw[edge] (4) -- (3);
\draw[edge] (1) -- (4);

\draw[dedge] (5) --(3)node[midway, below ,sloped] {\tiny $a$};
\draw[dedge] (4) -- (5)node[midway, below] {\tiny $a$};

\draw[dedge] (2) --(7)node[midway, above ] {\tiny $a$};
\draw[dedge] (8) -- (2)node[midway, below left] {\tiny $a$};
\draw[dedge] (4) --(8)node[midway, below ] {\tiny $a$};
\draw[dedge] (7) -- (4)node[midway, above right] {\tiny $a$};

\draw[edge] (6) -- (3);
\draw[edge] (6) -- (2);
\draw[edge] (6) -- (5)node[midway, right] {\tiny $b$};

\node[vertex]  (1) at (0+5,1+0.5){\tiny $v_1$};

\node[vertex]  (2) at (0+5,3+0.5){\tiny $v_2$};

\node[vertex]  (3) at (2+5,3+0.5){\tiny $v_3$};

\node[vertex]  (4) at (2+5,1+0.5){\tiny $v_4$};

\node[vertex]  (5) at (3.5+5,1+0.5){\tiny $v_7$};

\node[vertex]  (6) at (2+5,0){\tiny $v_8$};

\node[vertex]  (7) at (1.25+5,1.25+1+0.5){\tiny $v_5$};

\node[vertex]  (8) at (0.75+5,0.75+1+0.5){\tiny $v_{6}$};

\node  (10) at (5.5,0.4) {\small $(\Gamma,\psi)^{\alpha,a^2}$};

\draw[edge] (1) -- (2);
\draw[edge] (2) -- (3);
\draw[edge] (4) -- (3);
\draw[edge] (1) -- (4);

\draw[dedge] (3) --(5)node[midway, above right] {\tiny $a$};
\draw[dedge] (5) -- (4)node[midway, above] {\tiny $a$};

\draw[dedge] (2) --(7)node[midway, above ] {\tiny $a$};
\draw[dedge] (8) -- (2)node[midway, below left] {\tiny $a$};
\draw[dedge] (4) --(8)node[midway, below ] {\tiny $a$};
\draw[dedge] (7) -- (4)node[midway, above right] {\tiny $a$};

\draw[edge] (6) -- (4);
\draw[edge] (6) -- (1);
\draw[edge] (6) -- (5)node[midway, right] {\tiny $b$};
\end{tikzpicture}
\caption{The $D_8$-gain graphs $(\Gamma,\psi)$ and $(\Gamma,\psi)^{\alpha,a^2}$ of Examples \ref{exa:diedrale1}, \ref{exa:diedrale2} and \ref{exa:diedrale3}.}\label{fig:diedrale1}
\end{figure}
\end{example}

\begin{remark}
Proposition \ref{prop:somma} is still valid with this updated definition.
The first property of Definition \ref{def:updated} is in fact exactly the same of Definition \ref{def:pipar}. If the second property in Definition \ref{def:updated} holds for $\pi_1\oplus \pi_2$ then the same is true for both $\pi_1$ and $\pi_2$. Vice versa,  for a vertex $v\in C_0$ and for $i\in \{1,2,\ldots, k\}$,  condition (1)  of the second property in Definition \ref{def:updated} holds or not independently of the representation: the only thing to prove is that if  condition (2)  of the second property in Definition \ref{def:updated} holds for both $\pi_1$ and $\pi_2$, then it holds for $\pi_1\oplus \pi_2$. But this is simply the consequence of how the direct sum of representations and matrices is defined.
Therefore also Theorem \ref{thm:Marcovaldo} can be proved, with this updated definition, with an analogous proof, just taking into account that for the left regular representation $\lambda_G$ the condition (2) of the second property of  Definition \ref{def:updated} never occurs.
\end{remark}

This time we need to describe the switched graph, because it may be different from the one described for $G$-GM partitions and actually it can depend on the chosen representation.
\begin{definition}\label{def:consommanulla}
Let $\alpha$ be a \emph{$\pi$-GM partition} of $(\Gamma,\psi)$ and let $s\in G$ be such that $\pi(s)=-I$. The gain graph $(\Gamma,\psi)^{\alpha,s}=(\Gamma^{\alpha,s},\psi^{\alpha,s})$ is defined as follows:
\begin{itemize}
\item the adjacency and the gains between pairs of vertices in $\bigcup_{i=1}^k C_i$ are as in $(\Gamma,\psi)$;
\item for $v\in C_0$ and $i\in \{1,2,\ldots, k\}$, if we are in the case $(1)$ that $\Psi_i(v)=\frac{|C_i|}{2}\, g_1+\frac{|C_i|}{2}\, g_2$,  with $g_1,g_2\in G\cup \left\{0\right\}$, for every $w\in C_i$, then we set:
$$
\psi^{\alpha,s}(v,w)=
\begin{cases}
g_1 &\mbox{ if } \psi(v,w)=g_2,\\
g_2 &\mbox{ if } \psi(v,w)=g_1,
\end{cases}
$$
with the convention that $\psi(v,w)=0$ (resp. $\psi^{\alpha,s}(v,w)=0$) means that $v$ and $w$ are not adjacent in $\Gamma$ (resp. in $\Gamma^{\alpha,s}$).
If we are not in the case $(1)$ and we are in the case  $(2)$, so that $\pi(\Psi_i(v))=0$, then we set
$$
\psi^{\alpha,s}(v,w)=s\; \psi(v,w).
$$
\end{itemize}
The partition $\alpha$ is said to be \emph{nontrivial} if   $(\Gamma,\psi)$ and  $(\Gamma,\psi)^{\alpha,s}$ are not switching isomorphic.
\end{definition}

\begin{example}\label{exa:diedrale2}
On the right of Fig.~\ref{fig:diedrale1} the $D_8$-gain graph $(\Gamma,\psi)^{\alpha,a^2}$ is depicted, associated to the  $\pi_2$-GM partition $\alpha$ of $(\Gamma,\psi)$ discussed in Example \ref{exa:diedrale1}.
In fact in $D_8$ the element $a^2$ is a central involution and $\pi_2(a^2)=-I$.\\
Notice that $\Gamma$ and $\Gamma^{\alpha,a^2}$ are not isomorphic, since the vertex $v_4$ of $\Gamma^{\alpha,a^2}$ has degree $6$, but no vertex in $\Gamma$ has such a degree. In particular, $(\Gamma,\psi)$ and $(\Gamma,\psi)^{\alpha,a^2}$ are not switching isomorphic and then $\alpha$ is nontrivial.
It is worth noting that the underlying graph $\Gamma^{\alpha,a^2}$  is not the graph that would have been obtained with a classic Godsil-McKay switching on $\Gamma$ (which in fact respects all the conditions described in \cite{GoMc}).
\end{example}

Observe that the multiplication by $s$ is the analogue of the switch of the sign in the Godsil-McKay switching defined for signed and complex unit gain graphs; more precisely,  for subgroups of $\mathbb T$ containing $s=-1$ and with respect to the representation $\pi_{id}$, the switching described in  Definition \ref{def:consommanulla} is  the same of that  defined in \cite{cos,gm}.
Notice that if both (1)  and  (2) holds,  we have arbitrarily decided that (1) has priority over (2) in the construction of $(\Gamma,\psi)^{\alpha,s}$. This is not completely irrelevant, because when the representation $\pi$ is not faithful, the switch of $g_1$ with $g_2$ described for case $(1)$ may not produce the same gain graph obtained by multiplying by $s$ instead.

\begin{theorem}\label{lemma-Q-pi-sommanulla}
Let $(\Gamma,\psi)$ be a $G$-gain graph, let $\pi$ be a unitary representation of $G$ and $\alpha$ be a $\pi$-GM partition. Then
$$
\pi(A_{(\Gamma^{\alpha,s},\psi^{\alpha,s})})=  \pi(Q_\alpha)  \pi(A_{(\Gamma,\psi)})  \pi(Q_\alpha).
$$
In particular, $(\Gamma,\psi)$ and $(\Gamma,\psi)^{\alpha,s}$ are $\pi$-cospectral.
\end{theorem}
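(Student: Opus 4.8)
The plan is to reproduce the proof of Theorem \ref{lemma-Q-pi} almost verbatim, isolating the single new phenomenon introduced by condition (2) of Definition \ref{def:updated}. Since the extension of $\pi$ to $M_n(\mathbb{C}G)$ is a homomorphism of algebras (Proposition \ref{productfou}), I would first rewrite the right-hand side as $\pi(Q_\alpha)\pi(A_{(\Gamma,\psi)})\pi(Q_\alpha)=\pi(Q_\alpha A_{(\Gamma,\psi)}Q_\alpha)$ and expand $Q_\alpha A_{(\Gamma,\psi)}Q_\alpha$ into the very same block matrix displayed in the proof of Theorem \ref{lemma-Q-pi}, whose $(i,j)$ block $C_{i,j}\in M_{n_i,n_j}(\mathbb C G)$ records the adjacencies from $C_i$ to $C_j$.

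Next I would dispose of the blocks that behave exactly as before. The diagonal block $C_{0,0}$ is untouched by $Q_\alpha$, and the corresponding adjacencies are unchanged in $(\Gamma,\psi)^{\alpha,s}$. For $i,j\in\{1,\dots,k\}$ the first property of Definition \ref{def:updated} coincides with that of Definition \ref{def:pipar}, so the argument of Theorem \ref{lemma-Q-pi}—introducing the correction matrix $D$ with $\pi(D)=0$, observing that $C_{i,j}+D$ has constant row and column sums, and applying property (2) of Lemma \ref{propQ}—gives $\pi(Q_{n_i}C_{i,j}Q_{n_j})=\pi(C_{i,j})$ verbatim. Since the adjacencies and gains inside $\bigcup_{i=1}^k C_i$ are preserved by Definition \ref{def:consommanulla}, all these blocks already agree with $\pi(A_{(\Gamma^{\alpha,s},\psi^{\alpha,s})})$.

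The only genuinely new work concerns the blocks $C_{0,i}$, which I would treat one row at a time, indexed by $v\in C_0$ (right multiplication by $Q_{n_i}$ acts on each row independently). Writing $\mathbf{r}$ for the $v$-th row of $C_{0,i}$, whose entries are the gains $\psi(v,w)$ for $w\in C_i$ and whose sum is $\Psi_i(v)$, one has $\mathbf{r}\,Q_{n_i}=\tfrac{2}{n_i}\bigl(\Psi_i(v),\dots,\Psi_i(v)\bigr)-\mathbf{r}$, hence
$$
\pi\!\left(\mathbf{r}\,Q_{n_i}\right)=\tfrac{2}{n_i}\bigl(\pi(\Psi_i(v)),\dots,\pi(\Psi_i(v))\bigr)-\pi(\mathbf{r}).
$$
For every $v$ satisfying condition (1) (which, by the stated priority, governs the construction in that case) this is precisely property (5) of Lemma \ref{propQ}, swapping the gains $g_1$ and $g_2$ and thus reproducing $\psi^{\alpha,s}$ in case (1) exactly as in Theorem \ref{lemma-Q-pi}. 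For a row handled by case (2) we have $\pi(\Psi_i(v))=0$, so the display collapses to $\pi(\mathbf{r}\,Q_{n_i})=-\pi(\mathbf{r})$; as $\pi(s)=-I$, this equals $\pi(s\,\mathbf{r})$, which is exactly $\pi$ of the row of $A_{(\Gamma^{\alpha,s},\psi^{\alpha,s})}$ prescribed by $\psi^{\alpha,s}(v,w)=s\,\psi(v,w)$. The transposed blocks $Q_{n_i}C_{0,i}^{*}=(C_{0,i}Q_{n_i})^{*}$ follow by Hermitian symmetry, using $\pi(A^{*})=\pi(A)^{*}$ (Proposition \ref{productfou}). Assembling the blocks yields the asserted matrix identity, and $\pi$-cospectrality follows because $Q_\alpha^{2}=I_{M_n(\mathbb C G)}$ (block-wise from property (1) of Lemma \ref{propQ}) forces $\pi(Q_\alpha)^{2}=I$, so $\pi(Q_\alpha)$ is an invertible, self-inverse matrix and the two represented adjacency matrices are similar.

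The main obstacle—indeed the only nontrivial point—is the case-(2) verification: one must check that right multiplication by $Q_{n_i}$ on a row whose entry-sum has vanishing $\pi$-image produces exactly a global sign change at the represented level, and that this sign change is realized on the gain-graph side by left multiplication by $s$. This is where the hypotheses $\pi(s)=-I$ and $\pi(\Psi_i(v))=0$ enter essentially, and it explains why the identity now holds only after applying $\pi$ (and depends on the chosen $s$), rather than being a purely group-algebra-level equality as in Theorem \ref{G-Gods}.
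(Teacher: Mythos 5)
Your proposal is correct and follows essentially the same route as the paper's proof: the same block expansion of $Q_\alpha A_{(\Gamma,\psi)}Q_\alpha$, the same reuse of the Theorem~\ref{lemma-Q-pi} argument for the blocks $C_{i,j}$ with $i,j\ge 1$, and the same key computation showing that a row/column of the $C_0$-block whose entry-sum has vanishing $\pi$-image picks up a global sign under multiplication by $Q_{n_i}$, matched on the gain-graph side by multiplication by $s$ (using $\pi(s)=-I$). The only cosmetic differences are that you work with rows of $C_{0,i}$ and fold properties (3)--(5) of Lemma~\ref{propQ} into a single formula where the paper works with columns of $C_{0,i}^{*}$ and introduces an auxiliary constant column, and that you make explicit the Hermitian-symmetry step and the $Q_\alpha^{2}=I_{M_n(\mathbb{C}G)}$ justification of cospectrality, which the paper leaves implicit.
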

\begin{proof}
A direct computation gives:       \footnotesize
$$
Q_\alpha A_{(\Gamma, \psi)}Q_\alpha = \left( \begin{array}{ccccc} C_{0,0} & C_{0,1}Q_{n_1} & \ldots & C_{0,k-1}Q_{n_{k-1}} &C_{0,k}  Q_{n_k} \\
    Q_{n_1} C_{0,1}^* & Q_{n_1}C_{1,1}Q_{n_1} & \ldots & Q_{n_1}C_{1, k-1}Q_{n_{k-1}} & Q_{n_1}C_{1,k}Q_{n_k} \\
    \vdots & \vdots & \ddots &  \vdots & \vdots \\
    Q_{n_{k-1}} C_{0,k-1}^* & Q_{n_{k-1}}C_{1, k-1}^*Q_{n_1} & \ldots & Q_{n_{k-1}}C_{k-1,k-1}Q_{n_{k-1}} & Q_{n_{k-1}}C_{k-1,k}Q_{n_{k}} \\
    Q_{n_k} C_{0,k}^* & Q_{n_{k}} C_{1, k}^*Q_{n_1} & \ldots & Q_{n_{k}}C_{k-1,k}^*Q_{n_{k-1}} & Q_{n_{k}}C_{k,k}Q_{n_{k}}
    \end{array}\right).
$$     \normalsize
As in the proof of Theorem \ref{lemma-Q-pi}
we know that $$\pi(Q_{n_i}C_{i, j}Q_{n_{j}})=\pi(C_{i,j})$$ for $i,j\in\{1,\ldots,k\}$, and then the blocks in position  $i,j$  of $\pi(Q_\alpha)  \pi(A_{(\Gamma,\psi)})  \pi(Q_\alpha)$ and $ \pi(A_{(\Gamma^{\alpha,s},\psi^{\alpha,s})})$ coincide. We have to prove that for any $i\in\{1,\ldots,k\}$ the matrix
$ Q_{n_i} C_{0,i}^*$ and the associated block in $A_{(\Gamma^{\alpha,s},\psi^{\alpha,s})}$ have the same image via $\pi$.
As in the proof of Theorem \ref{G-Gods}, the columns of $ Q_{n_i} C_{0,i}^*$
associated with vertices $v$ of $C_0$  such that there exist $g_1,g_2\in G\cup\{0\}$ with
$\Psi_i(v)=\frac{|C_i|}{2}\, g_1+\frac{|C_i|}{2}\, g_2$, coincide with the corresponding columns in $A_{(\Gamma^{\alpha,s},\psi^{\alpha,s})}$. In particular, they have the same $\pi$-image.
Suppose now that $v\in C_0$ is such that
$$
\pi(\Psi_i(v))=0,
$$
and that there exists $s\in G$ such that $\pi(s)=-I$. Let us denote by $\bold{x}$ the associated column in $C_{0,i}^*$. More precisely, if $C_i=\{v_1,\ldots, v_{n_i}\}$, the entry $\bold{x}_l$ is $\psi(v,v_l)^*$ if $v$ and $v_l$ are adjacent and $0$ otherwise.
We define a constant column $\bold{y}$ whose image via $\pi$ is the zero column as:
$$
\bold{y}_l=\frac{\Psi_i(v)^*}{n_i}, \qquad l\in \{1,\ldots,n_i\}.
$$
In fact, one has $\pi(\Psi_i(v)^\ast)=0$ since $\pi(\Psi_i(v))=0$. Also notice that
$$
\sum_{l=1}^{n_i} (\bold{x} - \bold{y})_l=0,
$$
and so, by properties (3) and (4) of Lemma \ref{propQ} one has:
$$
Q_{n_i} \bold{x}=Q_{n_i} (\bold{x}-\bold{y})+Q_{n_i}\bold{y}=2\bold{y}-\bold{x}
$$
and then
$$
\pi(Q_{n_i} \bold{x})=-\pi(\bold{x}),
$$
since $\pi(\Psi_i(v))=0$. On the other hand, by definition of $(\Gamma^{\alpha,s},\psi^{\alpha,s})$, the column corresponding to the adjacency between the vertex $v$ with the part $C_i$ is exactly $s \, \bold{x}$, whose image via $\pi$ is $-\pi(\bold{x})$, and this concludes the proof.
\end{proof}

\begin{example}\label{exa:diedrale3}
An explicit computation shows that the characteristic polynomial of the $\pi_2$-adjacency matrix of both the $D_8$-gain graphs $(\Gamma,\psi)$ and $(\Gamma,\psi)^{\alpha,a^2}$ of Example \ref{exa:diedrale2} (depicted in Fig.~\ref{fig:diedrale1}) is
$$
x^{16}-26x^{14}+263x^{12}-1306x^{10}+3297x^8-3968x^6+1984x^4-256x^2.
$$
\end{example}

\section{Godsil-McKay switching for quaternion unit gain graphs}\label{sectionquaternions}
Let $\mathbb{H}$ be the algebra of real quaternions, that is, the unital associative $\mathbb{R}$-algebra whose generators are $i,j$ and $k$, where
\begin{equation*}
i^2 = j^2 = k^2 = ijk = -1.
\end{equation*}
Every element $q\in \mathbb{H}$ can be written as
\begin{eqnarray}\label{expressionq}
q = a + bi + cj + dk,
\end{eqnarray}
where the coefficients $a,b,c,d\in \mathbb{R}$ are uniquely determined. Given $q\in \mathbb{H}$ as in Eq. \eqref{expressionq}, the \emph{imaginary part} of $q$ is $Im(q) = bi + cj + dk$, whereas the \emph{real part} of $q$ is $Re(q) = a$.
The \emph{conjugate} of a quaternion $q$ is defined as $\overline{q} := Re(q) - Im(q)$. Notice that $\mathbb{H}$ is not a commutative algebra.\\ Given $q\in \mathbb{H}$ as in Eq. \eqref{expressionq}, the \emph{norm} of $q$ is defined as
\begin{equation*}
|q| = \sqrt{a^2 + b^2 + c^2 + d^2}.
\end{equation*}
Notice that $|q_1 q_2| = |q_1||q_2|$. The \emph{inverse} of an element $q\neq 0\in \mathbb{H}$ is then defined as
$$
q^{-1} = \frac{\overline{q}}{|q|^2}
$$
and it satisfies the equalities $qq^{-1} = q^{-1}q = 1$. Two quaternions $q_1, q_2 \in \mathbb{H}$ are said to be \emph{similar} if there exists $h\in \mathbb{H}$ such that
\begin{equation*}
q_1 = h^{-1} q_2 h.
\end{equation*}
If $q_1$ and $q_2$ are similar, then we write $q_1 \sim q_2$. The similarity is clearly an equivalence relation, and we denote by $[q]$ the equivalence class of $q$ with respect to this equivalence. Observe that if $Im(q) = 0$, that is $q$ is real, then $[q] = \{q\}$, since in this case $qh = hq$ for each $h$ in $\mathbb{H}$. Moreover, one has $\overline{q}\sim q$ for each $q\in \mathbb{H}$.\\
\indent By \cite[Lemma 2.2]{Spec} we know that for each $q\in \mathbb{H}$ there is a unique $\lambda_q\in \mathbb{C}$ such that $Im(\lambda_q) \geq 0$ and
\begin{equation*}
[q] = [\lambda_q] = [\overline{\lambda_q}].
\end{equation*}
In words, we can say that there exists a 1-to-1 correspondence between the set of similarity classes of quaternions and the set of complex numbers with nonnegative imaginary part. Finally, we recall that the set
$$
U(\mathbb{H}) = \{q \in \mathbb{H} : |q|=1\}
$$
of quaternions whose norm is equal to $1$ constitutes a multiplicative group. The main object of this section is the investigation of $U(\mathbb{H})$-gain graphs.

Let us put $\mathbb{H}^n = \{(q_1,\ldots, q_n): q_i\in \mathbb{H} \ \forall i=1,\ldots, n\}$ and let us denote by $M_n(\mathbb{H})$ the set of $n\times n$ matrices with entries in $\mathbb{H}$. As in the complex case, given a matrix $A =(a_{ij})\in M_n(\mathbb{H})$, its \emph{conjugate transpose} is the matrix $A^*= (\overline{a_{ji}}) \in M_n(\mathbb{H})$. A matrix $A \in M_n(\mathbb{H})$ is \emph{Hermitian} if $A= A^*$, it is \emph{normal} if $AA^* = A^*A$ and it is \emph{unitary} if $AA^* = A^*A=I_{M_n(\mathbb{H})}$. Notice that the adjacency matrix of a $U(\mathbb{H})$-gain graph is a Hermitian matrix in $M_n(\mathbb{H})$.\\

For a square matrix $A\in M_n(\mathbb{H})$, we say that $q \in \mathbb{H}$ is a \emph{right eigenvalue} if there is a nonzero column vector ${\bf x} = (q_1,\ldots, q_n)^T$ such that $A{\bf x} = {\bf x}q$.
Notice that if $q\in \mathbb{H}$ is a right eigenvalue, then any $q'\in [q]$ is a right eigenvalue. Indeed, if $q' = h^{-1}qh$ and $A{\bf x} = {\bf x}q$, then the column vector ${\bf x}h$ satisfies the equation $A({\bf x}h)= ({\bf x}h)q'$.

Spectral theory of quaternionic matrices is a topic studied in the literature \cite{brenner,Lee,Spec,zhang}, also in relation with gain graphs  \cite{quat}. It has even been shown that the analogue of the spectral theorem holds.

\begin{theorem}\cite[Theorem 3.3, Proposition 3.8]{Spec}\label{Spectral Theorem}
If $A \in M_n(\mathbb{H})$ is normal then there are matrices $D, U\in M_n(\mathbb{H})$ such that
\begin{itemize}
    \item $U$ is a unitary matrix, $D$ is a diagonal matrix, and $U^*AU = D$;
    \item each diagonal entry of $D$ is a complex number $\lambda$ such that $Im(\lambda) \geq 0$;
    \item $q\in \mathbb{H}$ is a right eigenvalue of $A$ if and only if $[q] = [\lambda]$ for some diagonal element $\lambda$ of $D$.
\end{itemize}
Moreover, if $A=A^*$, the matrix $D$ is real.
\end{theorem}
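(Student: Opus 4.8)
The plan is to reduce the quaternionic statement to the classical complex spectral theorem by means of the \emph{complex adjoint} construction. Regarding $\mathbb{H}^n$ as a right $\mathbb{C}$-vector space through the inclusion $\mathbb{C}\subset\mathbb{H}$, one has $\mathbb{H}^n\cong\mathbb{C}^{2n}$, and left multiplication by a matrix $A\in M_n(\mathbb{H})$ is right-$\mathbb{C}$-linear; this yields a map $\chi\colon M_n(\mathbb{H})\to M_{2n}(\mathbb{C})$. First I would check that $\chi$ is a real algebra homomorphism satisfying $\chi(A^*)=\chi(A)^*$, so that the hypothesis $AA^*=A^*A$ transfers to the normality of the complex matrix $\chi(A)$. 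The second ingredient is the antilinear map $J\colon\mathbb{C}^{2n}\to\mathbb{C}^{2n}$ given by right multiplication by $j$; since $zj=j\overline{z}$ for $z\in\mathbb{C}$ one gets $J^2=-I$, and since left multiplication by $A$ commutes with right multiplication by $j$ one gets $\chi(A)J=J\chi(A)$.

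The core step is then to combine the complex spectral theorem with this extra symmetry. Applying the classical result to the normal matrix $\chi(A)$ gives a unitary diagonalization with complex eigenvalues. The commuting antilinear involution $J$ (with $J^2=-I$) forces the spectrum to be invariant under complex conjugation and sends the $\lambda$-eigenspace isomorphically onto the $\overline{\lambda}$-eigenspace, while preserving orthogonality for the standard Hermitian product. Hence for each conjugate pair $\{\lambda,\overline{\lambda}\}$ with $Im(\lambda)\geq 0$ I would choose an orthonormal complex basis $\mathbf{v}_1,\dots,\mathbf{v}_r$ of the $\lambda$-eigenspace and observe that $J\mathbf{v}_1,\dots,J\mathbf{v}_r$ is then an orthonormal basis of the $\overline{\lambda}$-eigenspace. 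Each pair $(\mathbf{v}_s,J\mathbf{v}_s)$ reassembles, under the identification $\mathbb{C}^{2n}\cong\mathbb{H}^n$ in which $\mathbf{u}$ and $\mathbf{u}j$ correspond to $\mathbf{v}$ and $J\mathbf{v}$, into a single quaternionic column; collecting these columns produces a quaternionic matrix $U$.

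It remains to verify that $U$ is unitary in $M_n(\mathbb{H})$ and that $U^*AU=D$ is the diagonal matrix whose entries are the chosen representatives $\lambda$ with $Im(\lambda)\geq 0$. Unitarity follows by translating the complex orthonormality of the $\mathbf{v}_s$ and $J\mathbf{v}_s$ back through $\chi$, and diagonalization follows because each quaternionic column, being built from eigenvectors of $\chi(A)$ for the conjugate pair $\{\lambda,\overline{\lambda}\}$, is a right eigenvector of $A$ with eigenvalue $\lambda$. For the right-eigenvalue characterization I would use that right eigenvalues are invariant under the similarity relation $[\,\cdot\,]$ and that, by \cite[Lemma~2.2]{Spec}, each class has a unique representative with nonnegative imaginary part: a right eigenvalue $q$ of $A$ with eigenvector $\mathbf{x}$, after replacing $q$ by its complex representative via $\mathbf{x}\mapsto\mathbf{x}h$, becomes a complex eigenvalue of $\chi(A)$, and conversely. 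Finally, if $A=A^*$ then $\chi(A)$ is Hermitian, its eigenvalues are real, and so $D$ is real.

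The hard part will be the extraction in the second paragraph of a genuinely quaternionic orthonormal eigenbasis from the complex diagonalization: one must keep careful track of the interaction between the antilinear involution $J$, the Hermitian inner product on $\mathbb{C}^{2n}$, and the right-$\mathbb{H}$-module structure, and check that the pairing $(\mathbf{v}_s,J\mathbf{v}_s)$ indeed descends to a right-$\mathbb{H}$-orthonormal system whose assembled change-of-basis matrix lies in the quaternionic unitary group and not merely in $U_{2n}(\mathbb{C})$.
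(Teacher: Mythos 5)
The paper itself offers no proof of this statement: it is imported verbatim from Farenick--Pidkowich \cite[Theorem 3.3, Proposition 3.8]{Spec}, so your attempt can only be judged on its own terms. Your overall route --- reduce to the complex spectral theorem via the complex adjoint $\chi$ (the paper's $f$) together with the antilinear map $J$ given by right multiplication by $j$ --- is a legitimate and classical one, and it is consistent with the machinery the paper deploys around this theorem (Propositions \ref{prop:f} and \ref{f=r}, and the shuffling identification of $f$ with $\pi_{\mathbb H}$ in the proof of Theorem \ref{teo:qfinale}). Your preparatory steps are all correct: $\chi$ is a real $*$-algebra homomorphism, so normality transfers; $J^2=-I$, $\chi(A)J=J\chi(A)$, and $J$ is antiunitary, so the spectrum of $\chi(A)$ is conjugation-symmetric and $J$ maps the eigenspace $E_\lambda$ isometrically onto $E_{\overline{\lambda}}$; and the right-eigenvalue characterization does follow from \cite[Lemma 2.2]{Spec} once one notes that $A\mathbf{x}=\mathbf{x}\lambda$ with $\lambda\in\mathbb{C}$ is literally the eigenvalue equation for $\chi(A)$ under the identification $\mathbb{H}^n\cong\mathbb{C}^{2n}$.

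The genuine gap is the case of \emph{real} eigenvalues, and it is not a corner case: it is the entire content of the ``Moreover'' clause, which is the part the paper actually uses (adjacency matrices of $U(\mathbb{H})$-gain graphs are Hermitian, so \emph{all} eigenvalues of $\chi(A)$ are real there). When $Im(\lambda)=0$ one has $E_\lambda=E_{\overline{\lambda}}$, and your prescription --- take \emph{any} orthonormal basis $\mathbf{v}_1,\dots,\mathbf{v}_r$ of $E_\lambda$ and pair each $\mathbf{v}_s$ with $J\mathbf{v}_s$ --- collapses: the $2r$ vectors $\mathbf{v}_1,\dots,\mathbf{v}_r,J\mathbf{v}_1,\dots,J\mathbf{v}_r$ all lie in the $r$-dimensional space $E_\lambda$, so they cannot form an orthonormal system (nothing prevents, say, $J\mathbf{v}_1=\mathbf{v}_2$), and your $U$ would in any case have the wrong number of columns ($r$ instead of $r/2$ for this eigenvalue). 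The point is that quaternionic orthonormality of the assembled columns requires \emph{both} $\langle \mathbf{v}_s,\mathbf{v}_t\rangle=\delta_{st}$ \emph{and} the cross conditions $\langle \mathbf{v}_s,J\mathbf{v}_t\rangle=0$; the latter are automatic only across two \emph{distinct} (hence orthogonal) conjugate eigenspaces, never within a single real eigenspace. The standard repair, which your closing paragraph gestures at but does not supply, is: for real $\lambda$ the space $E_\lambda$ is $J$-invariant and of even complex dimension; antiunitarity together with $J^2=-I$ forces $\langle \mathbf{v},J\mathbf{v}\rangle=0$ for every $\mathbf{v}$; so pick a unit vector $\mathbf{v}_1\in E_\lambda$, observe that the complex span of $\{\mathbf{v}_1,J\mathbf{v}_1\}$ is a $J$-invariant plane whose orthogonal complement inside $E_\lambda$ is again $J$-invariant, and recurse. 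This produces $r/2$ quaternionic columns per real eigenvalue, restores the correct dimension count, and with this replacement your argument goes through.
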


It follows from Theorem \ref{Spectral Theorem} that with a Hermitian matrix $A\in M_n(\mathbb{H})$ a real diagonal matrix $D$ is associated.
The \emph{right spectrum} of $A$, that we denote by $\sigma_r(A)$, is the real multiset of elements on the main diagonal of $D$.
In particular, two Hermitian matrices are \emph{right cospectral} if and only if they are conjugated with each other via a unitary matrix. Clearly two $U(\mathbb{H})$-gain graphs are right cospectral if and only if their adjacency matrices are.

Observe that each quaternion $q = a + bi + cj + dk$ can be rewritten as
$$
q = (a + bi) + (c+di)j,
$$
Similarly, every quaternionic matrix $A\in M_n(\mathbb{H})$ can be decomposed as $A = A_1 + A_2j$, with $A_1, A_2\in M_n(\mathbb{C})$. Then the \emph{complex adjoint matrix} of $A$ is the matrix $f(A) \in M_{2n}(\mathbb{C})$ defined by
\begin{eqnarray*}
f(A) = \left(
         \begin{array}{cc}
           A_1 & A_2 \\
           -\overline{A}_2 & \overline{A}_1 \\
         \end{array}
       \right).
\end{eqnarray*}
The following properties of the complex adjoint matrix hold (see, for instance, \cite{Spec, Lee}).
\begin{proposition}\label{prop:f}
For each $A,B\in M_{n}(\mathbb{H})$ the following properties hold:
\begin{itemize}
\item $f(I_{M_n(\mathbb{H})}) = I_{2n}$;
\item $f(A + B) = f(A) + f(B)$;
\item $f(A B) = f(A)f(B);$
\item $f(A^*) = f(A)^*$;
\item $f(A^{-1}) = f(A)^{-1}$, when $A^{-1}$ exists.
\end{itemize}
\end{proposition}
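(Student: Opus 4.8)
The plan is to verify the five identities in turn, exploiting the fact that, once $\mathbb{H}^n$ is regarded as a right $\mathbb{C}$-vector space of dimension $2n$ through the decomposition $q = z_1 + z_2 j$, the map $f$ is nothing but the matrix realization of the left action of $M_n(\mathbb{H})$ on $\mathbb{H}^n$. The engine of every computation is the single relation $jz = \overline{z}\,j$ for $z\in\mathbb{C}$, which entrywise reads $jB_1 = \overline{B}_1\,j$ for any $B_1\in M_n(\mathbb{C})$. This is the only point where the noncommutativity of $\mathbb{H}$ intervenes, and it is precisely what forces the conjugated blocks to appear in the definition of $f$.

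The first two properties are immediate. Writing $A = A_1 + A_2 j$ with $A_1,A_2\in M_n(\mathbb{C})$, one has $I_{M_n(\mathbb{H})} = I_n + 0\cdot j$, so $f(I_{M_n(\mathbb{H})}) = I_{2n}$ directly from the definition. Since $(A+B)_1 = A_1+B_1$ and $(A+B)_2 = A_2+B_2$, and each block of $f(A)$ depends on $A_1,A_2$ additively (up to entrywise complex conjugation, which is itself additive), the equality $f(A+B)=f(A)+f(B)$ follows at once.

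The multiplicative law $f(AB)=f(A)f(B)$ is the computational core. First I would expand $AB = (A_1+A_2 j)(B_1+B_2 j)$ and move every $j$ to the right using $jB_1=\overline{B}_1 j$ and $j^2=-1$, obtaining
\[
AB = \big(A_1B_1 - A_2\overline{B}_2\big) + \big(A_1B_2 + A_2\overline{B}_1\big)\,j,
\]
so that $(AB)_1 = A_1B_1 - A_2\overline{B}_2$ and $(AB)_2 = A_1B_2 + A_2\overline{B}_1$. Then I would multiply the two block matrices $f(A)$ and $f(B)$ and check that their four blocks are exactly $(AB)_1$, $(AB)_2$, $-\overline{(AB)_2}$, $\overline{(AB)_1}$; this is a routine but careful block comparison. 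Once multiplicativity holds, the inverse property is free: from $f(A)f(A^{-1}) = f(AA^{-1}) = f(I_{M_n(\mathbb{H})}) = I_{2n}$ (and symmetrically on the other side) one reads off $f(A^{-1}) = f(A)^{-1}$.

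The identity $f(A^*) = f(A)^*$ is the step I expect to be the most delicate, since it couples two distinct involutions — the quaternionic conjugate-transpose on the left and the complex conjugate-transpose on the right — together with a bare transpose arising from the index swap. The key preliminary computation is to express the blocks of $A^*$: using $\overline{z_1+z_2 j} = \overline{z_1} - z_2 j$ entrywise together with the transposition of indices, one finds $(A^*)_1 = A_1^*$ and $(A^*)_2 = -A_2^{T}$, a transpose \emph{without} conjugation. Substituting into the definition of $f$ and simplifying with $\overline{A_1^*} = A_1^{T}$ (and the fact that conjugation and transposition commute), one obtains a matrix that must then be matched block by block against the complex conjugate-transpose of $f(A)$. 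Keeping straight which operation conjugates and which merely transposes is where the bookkeeping is heaviest, but no idea beyond the relation $jz=\overline{z}\,j$ used above is required.
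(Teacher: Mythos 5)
Your proposal is correct, but it cannot be compared to a proof in the paper for a simple reason: the paper does not prove this proposition at all. It states the properties with the attribution ``(see, for instance, \cite{Spec, Lee})'' and relies entirely on the literature (Farenick--Pidkowich and Lee). Your self-contained verification is therefore a genuine addition rather than a variant, and it follows the standard route taken in those references. The computations you outline all check out: the relation $jz=\overline{z}\,j$ gives $(AB)_1=A_1B_1-A_2\overline{B}_2$ and $(AB)_2=A_1B_2+A_2\overline{B}_1$, and the block product
$$
f(A)f(B)=\left(\begin{array}{cc} A_1B_1-A_2\overline{B}_2 & A_1B_2+A_2\overline{B}_1\\ -\overline{A}_2B_1-\overline{A}_1\overline{B}_2 & \overline{A}_1\overline{B}_1-\overline{A}_2B_2 \end{array}\right)
$$
indeed has blocks $(AB)_1$, $(AB)_2$, $-\overline{(AB)_2}$, $\overline{(AB)_1}$. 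Likewise your blocks for the conjugate transpose, $(A^*)_1=A_1^*$ and $(A^*)_2=-A_2^{T}$ (transpose without conjugation, coming from $\overline{z_1+z_2j}=\overline{z_1}-z_2j$), are right, and both $f(A^*)$ and $f(A)^*$ reduce to the same matrix with blocks $A_1^*$, $-A_2^{T}$, $A_2^*$, $A_1^{T}$. Two minor remarks: your opening framing of $f$ as the matrix of left multiplication on $\mathbb{H}^n$ viewed as a right $\mathbb{C}$-space is convention-sensitive (a mismatch of conventions would produce $f(AB)=f(B)f(A)$ instead), but this is harmless since you never use it -- your actual argument is the direct block computation; and the deduction of the inverse property from multiplicativity should note that uniqueness of the decomposition $A=A_1+A_2j$ is what makes $f$ well defined and block comparison legitimate, a fact you use implicitly throughout.
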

These properties turn out to be very useful in order to investigate the right spectrum of a quaternionic matrix.

\begin{proposition}\label{f=r}
Let $A \in M_{n}(\mathbb{H})$, with $A^*=A$. Then  the spectrum of $f(A)$ consists exactly of two copies of the right spectrum of $A$. In particular, two Hermitian matrices $A$ and $B$ are right cospectral if and only if $f(A)$ and $f(B)$ are cospectral.
\end{proposition}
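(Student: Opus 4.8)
The plan is to reduce everything to the Spectral Theorem (Theorem \ref{Spectral Theorem}) by transporting it through the complex adjoint map $f$, whose algebraic properties are collected in Proposition \ref{prop:f}. First I would invoke Theorem \ref{Spectral Theorem}: since $A=A^*$ is in particular normal, there exist a unitary matrix $U\in M_n(\mathbb{H})$ and a \emph{real} diagonal matrix $D=\mathrm{diag}(\lambda_1,\ldots,\lambda_n)$ with $U^*AU=D$, and by definition $\sigma_r(A)=\{\lambda_1,\ldots,\lambda_n\}$ as a real multiset.

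Next I would apply $f$ to the identity $U^*AU=D$. Using the multiplicativity of $f$ together with $f(U^*)=f(U)^*$ (Proposition \ref{prop:f}), one gets $f(U)^*f(A)f(U)=f(D)$. Moreover $f(U)$ is unitary: indeed $f(U)f(U)^*=f(U)f(U^*)=f(UU^*)=f(I_{M_n(\mathbb{H})})=I_{2n}$, and similarly for the other product. Hence $f(A)$ and $f(D)$ are conjugate through the unitary matrix $f(U)$, and in particular they have the same complex spectrum.

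The key computation is to evaluate $f(D)$. Writing $D=D_1+D_2 j$ with $D_1,D_2\in M_n(\mathbb{C})$, the reality of $D$ forces $D_1=D$ and $D_2=0$, and since $D$ is real one also has $\overline{D}_1=D$. Plugging into the definition of the complex adjoint therefore gives
$$
f(D)=\left(\begin{array}{cc} D_1 & D_2 \\ -\overline{D}_2 & \overline{D}_1 \end{array}\right)=\left(\begin{array}{cc} D & 0 \\ 0 & D \end{array}\right),
$$
a $2n\times 2n$ real diagonal matrix whose diagonal carries each $\lambda_\ell$ exactly twice. Thus the spectrum of $f(D)$, and hence of $f(A)$, is precisely two copies of $\sigma_r(A)$, which is the first assertion.

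For the final equivalence I would simply read off multiplicities. Two complex matrices are cospectral if and only if their spectra coincide as multisets; by the first part $\mathrm{spec}(f(A))$ consists of two copies of $\sigma_r(A)$ and $\mathrm{spec}(f(B))$ of two copies of $\sigma_r(B)$, and since passing from a multiset to its two-fold copy is an injective operation, $f(A)$ and $f(B)$ are cospectral if and only if $\sigma_r(A)=\sigma_r(B)$, i.e.\ if and only if $A$ and $B$ are right cospectral. I do not anticipate a serious obstacle here; the only point requiring care is verifying that $f(D)$ is exactly the block-diagonal doubling of $D$, which hinges on the Hermitian hypothesis guaranteeing, via Theorem \ref{Spectral Theorem}, that $D$ is real rather than merely complex with nonnegative imaginary part.
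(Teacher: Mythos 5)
Your proof is correct and follows essentially the same route as the paper: diagonalize $A$ via the quaternionic spectral theorem, push the identity through $f$ using Proposition \ref{prop:f}, and observe that $f(D)$ carries each real eigenvalue of $D$ twice. You simply spell out details the paper leaves implicit (the unitarity of $f(U)$, the explicit block form of $f(D)$, and the multiset argument for the final equivalence).
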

\begin{proof}
Let $A \in M_{n}(\mathbb{H})$, with $A^*=A$. Then, by Theorem \ref{Spectral Theorem}, there exist a real, diagonal matrix $D$ and a unitary matrix $U$ such that $A=UDU^*$. By Proposition \ref{prop:f}, we have $f(A)=f(U)f(D)f(U)^*$:
but $f(D)$ is a diagonal matrix where each diagonal element of $D$ appears twice: the statement follows.
\end{proof}

Another interesting consequence of Proposition \ref{prop:f} is that the restriction of the map $f$ to $M_1(\mathbb{H})$ induces a unitary representation of $U(\mathbb H)$, that we are going to introduce in the next definition.
\begin{definition}\label{def:piH}
We denote by $\pi_\mathbb H\colon U(\mathbb H)\to GL_2(\mathbb C)$ the unitary representation of degree $2$ of $U(\mathbb H)$ such that:
\begin{equation*}
\pi_\mathbb{ H}(1) =\left(
             \begin{array}{cc}
               1 & 0 \\
               0 & 1 \\
             \end{array}
           \right) \quad \pi_\mathbb{ H}( i) = \left(
             \begin{array}{cc}
               i & 0 \\
               0 & -i\\
             \end{array}
           \right)\quad \pi_\mathbb{ H}( j) = \left(
             \begin{array}{cc}
               0 & 1 \\
               -1 & 0 \\
             \end{array}
           \right) \quad  \pi_\mathbb{ H}( k) = \left(
             \begin{array}{cc}
               0 & i \\
               i & 0 \\
             \end{array}
           \right).
\end{equation*}
\end{definition}

We are now in position to present  a Godsil-McKay switching for the right spectrum of quaternion unit gain graphs.\\
Let $(\Gamma,\psi)$  be a $U(\mathbb{H})$-gain graph, and let $\alpha=\{C_0,C_1,\ldots, C_k\}$ be a partition of the vertex set $V_\Gamma$.
For any $v\in V_\Gamma$ and for each $i\in \{0,1,\ldots, k\}$, we set
$$
\Psi_i^{\mathbb{H}}(v):=\sum_{w\in C_i,\; w\sim v} \psi(v,w),
$$
where the sum is in  $\mathbb H$. Also, notice that $-1$ is a central involution in $U(\mathbb{H})$.
\begin{definition}\label{def:Qpartition}
A partition $\alpha=\left\{C_0,C_1,\ldots, C_k \right\}$ of the vertex set $V_\Gamma$ of a gain graph $(\Gamma,\psi)$  is said to be a \emph{quaternionic GM partition} if:
\begin{itemize}
\item
for every $i,j\in \{1,2,\ldots, k\}$ and for every $v,v'\in C_i$, one has
$$
\Psi_j^\mathbb{H}(v)=\Psi_j^\mathbb{H}(v');
$$
\item
for every $v\in C_0$, for every $i\in \{1,2,\ldots, k\}$, at least one of the following conditions holds:
\begin{enumerate}
\item there exist $q_1,q_2\in U(\mathbb{H}) \cup \left\{0\right\}$ such that $v$ is adjacent to half of the vertices of $C_i$ with gain $q_1$ and to the other half with gain $q_2$ (with the usual convention that the zero gain corresponds to no adjacency);
\item $\Psi_i^\mathbb{H}(v)=0$.
\end{enumerate}
\end{itemize}
\end{definition}

\begin{example}\label{ex:Q1}
Let us consider the $U(\mathbb H)$-gain graph $(\Gamma, \psi)$ depicted on the top of Fig.~\ref{fig:Qua}, and its partition $\alpha:=\{C_0,C_1\}$ with $C_0=\{v_7,v_8\}$ and $C_1=\{v_1,\ldots, v_6\}$.

We have
\begin{equation*}
\begin{split}
\Psi_1^{\mathbb{H}}(v_i)&=i-i=0\qquad \mbox{ for } i=1,2,3,4;\\
\Psi_1^{\mathbb{H}}(v_i)&=0  \qquad \mbox{ for } i=5,6;\\
\Psi_1^{\mathbb{H}}(v_7)&=\frac{\sqrt{2}}{2}+\frac{\sqrt{2}}{2}j+\frac{\sqrt{2}}{2}-\frac{\sqrt{2}}{2}j-\frac{\sqrt{2}}{2}+\frac{\sqrt{2}}{2}k-\frac{\sqrt{2}}{2}-\frac{\sqrt{2}}{2}k=0,
 \end{split}
\end{equation*}
and there is an edge with gain $1$ connecting $v_8$ to exactly half of the vertices of $C_1$. As a consequence, $\alpha$ is a quaternionic GM partition, according to Definition \ref{def:Qpartition}.

\begin{figure}
\centering

\begin{tikzpicture}[scale=1.4]


\node[vertex]  (1) at (0,0){\tiny $v_1$};

\node[vertex]  (2) at (0,2){\tiny $v_2$};

\node[vertex]  (3) at (2,2){\tiny $v_3$};

\node[vertex]  (4) at (2,0){\tiny $v_4$};

\node[vertex]  (7) at (4,0){\tiny $v_7$};

\node[vertex]  (5) at (5.3,1.3){\tiny $v_5$};

\node[vertex]  (6) at (6,0){\tiny $v_6$};

\node[vertex]  (8) at (1,1){\tiny $v_8$};

\node  (10) at (-1,1) {\small $(\Gamma,\psi)$};


\draw[dedge] (1) -- (2)node[midway, above,sloped] {\tiny $i$};
\draw[dedge] (2) -- (3)node[midway, above,sloped] {\tiny $i$};
\draw[dedge] (3) -- (4)node[midway, above,sloped] {\tiny $i$};
\draw[dedge] (4) -- (1)node[midway, below,sloped] {\tiny $i$};

\draw[edge] (8) -- (2);
\draw[edge] (8) -- (1);
\draw[edge] (8) -- (4);

\draw[dedge] (7) -- (4)node[midway, above, sloped] {\tiny $\frac{\sqrt{2}}{2}+\frac{\sqrt{2}}{2} j$};

\draw[dedge] (3) -- (7)node[midway, above, sloped] {\tiny $\frac{\sqrt{2}}{2}+\frac{\sqrt{2}}{2} j$};

\draw[dedge] (7) -- (5)node[midway, above, sloped] {\tiny $\frac{-\sqrt{2}}{2}+\frac{\sqrt{2}}{2} k$};

\draw[dedge] (6) -- (7)node[midway, above, sloped] {\tiny $\frac{-\sqrt{2}}{2}+\frac{\sqrt{2}}{2} k$};

\end{tikzpicture}\\
\begin{tikzpicture}[scale=1.4]


\node[vertex]  (1) at (0,0){\tiny $v_1$};

\node[vertex]  (2) at (0,2){\tiny $v_2$};

\node[vertex]  (3) at (2,2){\tiny $v_3$};

\node[vertex]  (4) at (2,0){\tiny $v_4$};

\node[vertex]  (7) at (4,0){\tiny $v_7$};

\node[vertex]  (5) at (5.3,1.3){\tiny $v_5$};

\node[vertex]  (6) at (6,0){\tiny $v_6$};

\node[vertex]  (8) at (6,2){\tiny $v_8$};

\node  (10) at (-1,1) {\small  $(\Gamma,\psi)^{\alpha, \mathbb H}$};


\draw[dedge] (1) -- (2)node[midway, above,sloped] {\tiny $i$};
\draw[dedge] (2) -- (3)node[midway, above,sloped] {\tiny $i$};
\draw[dedge] (3) -- (4)node[midway, above,sloped] {\tiny $i$};
\draw[dedge] (4) -- (1)node[midway, below,sloped] {\tiny $i$};

\draw[edge] (8) -- (3);
\draw[edge] (8) -- (5);
\draw[edge] (8) -- (6);

\draw[dedge] (7) -- (4)node[midway, above, sloped] {\tiny $\frac{-\sqrt{2}}{2}-\frac{\sqrt{2}}{2} j$};

\draw[dedge] (3) -- (7)node[midway, above, sloped] {\tiny $\frac{-\sqrt{2}}{2}-\frac{\sqrt{2}}{2} j$};

\draw[dedge] (7) -- (5)node[midway, above, sloped] {\tiny $\frac{\sqrt{2}}{2}-\frac{\sqrt{2}}{2} k$};

\draw[dedge] (6) -- (7)node[midway, above, sloped] {\tiny $\frac{\sqrt{2}}{2}-\frac{\sqrt{2}}{2} k$};

\end{tikzpicture}
\caption{The $U(\mathbb H)$-gain graphs $(\Gamma,\psi)$ and $(\Gamma,\psi)^{\alpha, \mathbb H}$ from Examples \ref{ex:Q1} and \ref{ex:Q2}.}\label{fig:Qua}
\end{figure}
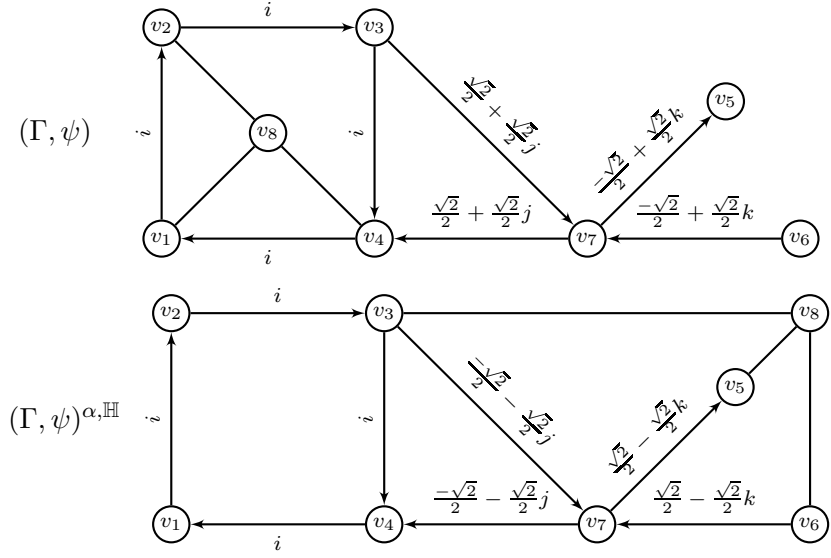
\end{example}

\begin{definition}
Let $\alpha$ be a quaternionic GM partition of the vertex set $V_\Gamma$ of $(\Gamma,\psi)$. Then the gain graph $(\Gamma,\psi)^{\alpha,\mathbb H}=(\Gamma^{\alpha,\mathbb H},\psi^{\alpha,\mathbb H})$ is defined as follows:
\begin{itemize}
\item the adjacency and the gains between pairs of vertices in $\bigcup_{i=1}^k C_i$ are the same as $(\Gamma,\psi)$;
\item for $v\in C_0$ and $i\in \{1,2,\ldots, k\}$, if we are in the case $(1)$ then we set:
$$
\psi^{\alpha,\mathbb H} (v,w)=
\begin{cases}
q_1 &\mbox{ if } \psi(v,w)=q_2,\\
q_2 &\mbox{ if } \psi(v,w)=q_1,
\end{cases}
$$
with the convention that $\psi(v,w)=0$ (resp. $\psi^{\alpha, \mathbb H}(v,w)=0$) means that $v$ and $w$ are not adjacent in $\Gamma$ (resp. in $\Gamma^{\alpha,\mathbb H}$).
If we are not in the case $(1)$ and we are in the case $(2)$, so that  $\Psi^{\mathbb H}_i(v)=0$, then we set
$$
\psi^{\alpha,\mathbb H}(v,w)=-\; \psi(v,w).
$$
\end{itemize}
The quaternionic GM partition $\alpha$ is \emph{nontrivial} if $(\Gamma,\psi)$ and  $(\Gamma,\psi)^{\alpha,\mathbb H}$ are not switching isomorphic.
\end{definition}

\begin{example}\label{ex:Q2}
In Fig.~\ref{fig:Qua} the $U(\mathbb H)$-gain graphs $(\Gamma,\psi)$ and  $(\Gamma,\psi)^{\alpha,\mathbb H}$, associated with the partition $\alpha$ described in Example \ref{ex:Q1} are depicted. Notice that $(\Gamma,\psi)$ and  $(\Gamma,\psi)^{\alpha,\mathbb H}$ are not switching isomorphic, since the underlying graphs $\Gamma$ and $\Gamma^{\alpha,\mathbb H}$ are clearly nonisomorphic. Then $\alpha$ is a nontrivial quaternionic GM partition of
$(\Gamma,\psi)$.
\end{example}

\begin{theorem}\label{teo:qfinale}
Let $(\Gamma, \psi)$ be a $U(\mathbb{H})$-gain graph and let $\alpha$ be a quaternionic GM partition. Then $(\Gamma, \psi)$ and $(\Gamma,\psi)^{\alpha,\mathbb H}$ are right cospectral.
\end{theorem}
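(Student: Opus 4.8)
The plan is to realise the quaternionic GM switching as the special case of the central-involution switching of Section~\ref{sub2}, for the group $G=U(\mathbb H)$ and the representation $\pi_{\mathbb H}$ of Definition~\ref{def:piH}, and then to convert the resulting $\pi_{\mathbb H}$-cospectrality into right cospectrality through the complex adjoint map $f$.

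First I would record the key compatibility between $f$ and $\pi_{\mathbb H}$. For a single quaternion $q=q_1+q_2 j$ with $q_1,q_2\in\mathbb C$, the complex adjoint of the $1\times 1$ matrix $(q)$ is $f(q)=\left(\begin{smallmatrix} q_1 & q_2\\ -\overline{q_2} & \overline{q_1}\end{smallmatrix}\right)$, which is exactly $\pi_{\mathbb H}(q)$; hence the extension of $\pi_{\mathbb H}$ to $\mathbb C U(\mathbb H)$ agrees with $f$ on every element of $U(\mathbb H)$ and, being linear and additive (cf.\ Proposition~\ref{prop:f}), satisfies $\pi_{\mathbb H}(\Psi_i(v))=f(\Psi_i^{\mathbb H}(v))$ for all $i$ and $v$. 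Since $f$ is injective on $\mathbb H$ (as $f(q)=0$ forces $q=0$), the two requirements of Definition~\ref{def:Qpartition} translate verbatim into those of Definition~\ref{def:updated}: the equalities $\Psi_j^{\mathbb H}(v)=\Psi_j^{\mathbb H}(v')$ become $\pi_{\mathbb H}(\Psi_j(v))=\pi_{\mathbb H}(\Psi_j(v'))$, case~(1) is literally the same, and case~(2), namely $\Psi_i^{\mathbb H}(v)=0$, becomes $\pi_{\mathbb H}(\Psi_i(v))=0$. Because $-1\in U(\mathbb H)$ satisfies $\pi_{\mathbb H}(-1)=-I$, the element $s=-1$ serves as the central involution required in Section~\ref{sub2}, and a direct comparison of Definition~\ref{def:consommanulla} (with $s=-1$) and the definition of $(\Gamma,\psi)^{\alpha,\mathbb H}$ shows that the two switched graphs coincide. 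Thus $\alpha$ is a $\pi_{\mathbb H}$-GM partition and $(\Gamma,\psi)^{\alpha,\mathbb H}=(\Gamma,\psi)^{\alpha,-1}$.

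Next I would invoke Theorem~\ref{lemma-Q-pi-sommanulla} with $G=U(\mathbb H)$, $\pi=\pi_{\mathbb H}$ and $s=-1$ (no finiteness of $G$ is needed there), obtaining that $(\Gamma,\psi)$ and $(\Gamma,\psi)^{\alpha,\mathbb H}$ are $\pi_{\mathbb H}$-cospectral; equivalently, the complex matrices $\pi_{\mathbb H}(A_{(\Gamma,\psi)})$ and $\pi_{\mathbb H}(A_{(\Gamma^{\alpha,\mathbb H},\psi^{\alpha,\mathbb H})})$ have the same spectrum.

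The remaining, and only delicate, step is to pass from $\pi_{\mathbb H}$-cospectrality to right cospectrality. The represented adjacency matrix $\pi_{\mathbb H}(A_{(\Gamma,\psi)})$ is built by replacing each quaternion entry $q$ with the $2\times 2$ block $\pi_{\mathbb H}(q)=f(q)$, whereas the complex adjoint $f(A^{\mathbb H}_{(\Gamma,\psi)})$ appearing in Proposition~\ref{f=r} collects the two complex components into global $n\times n$ blocks; these two $2n\times 2n$ matrices are conjugate via the perfect-shuffle permutation and are therefore cospectral. Combining this with the conclusion of the previous paragraph gives that $f(A^{\mathbb H}_{(\Gamma,\psi)})$ and $f(A^{\mathbb H}_{(\Gamma^{\alpha,\mathbb H},\psi^{\alpha,\mathbb H})})$ are cospectral, so Proposition~\ref{f=r} yields that $(\Gamma,\psi)$ and $(\Gamma,\psi)^{\alpha,\mathbb H}$ are right cospectral. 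The whole argument hinges on the careful bookkeeping between the group-algebra picture (entries in $\mathbb C U(\mathbb H)$, acted on by $\pi_{\mathbb H}$) and the quaternionic picture (entries in $\mathbb H$, acted on by $f$); everything else is a direct transcription of results already established.
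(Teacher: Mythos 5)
Your proposal is correct and follows essentially the same route as the paper's own proof: identify the quaternionic GM partition with a $\pi_{\mathbb H}$-GM partition (with $s=-1$ as the central involution), invoke Theorem~\ref{lemma-Q-pi-sommanulla} for $\pi_{\mathbb H}$-cospectrality, relate $\pi_{\mathbb H}(A)$ to the complex adjoint $f(A)$ by a shuffle permutation, and conclude via Proposition~\ref{f=r}. Your use of the injectivity of $f$ on $\mathbb{H}$ is a nice explicit justification of the equivalences the paper only asserts as ``easily checked,'' but it is a refinement of the same argument, not a different one.
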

\begin{proof}
As a first step we show that $\alpha$ is a quaternionic GM partition for $(\Gamma,\psi)$ if and only if it is  a $\pi_{\mathbb H}$-GM partition, where $\pi_{\mathbb H}$ is the unitary representation of $U(\mathbb H)$ of Definition \ref{def:piH}.
As in Eq.~\eqref{eq:grado} we have $\Psi_j(v)=\sum\limits_{w\in C_j, w\sim v} \psi(v,w)\in \mathbb C U(\mathbb H)$. One can easily check that
$$
\pi_{\mathbb H}(\Psi_j(v)) = \pi_{\mathbb H}(\Psi_j(v')) \iff \Psi_j^\mathbb{H}(v) = \Psi_j^\mathbb{H}(w)
$$
and that $$\pi_{\mathbb H}(\Psi_j(v))=0  \iff \Psi_j^\mathbb{H}(v)=0.$$
As a consequence,  the properties of Definition \ref{def:updated} hold for $\alpha$ if and only if the properties of Definition \ref{def:Qpartition} hold, and then a quaternionic GM partition is a   $\pi_{\mathbb H}$-GM partition. Moreover, the graph $(\Gamma,\psi)^{\alpha,\mathbb H}$ is exactly the $U(\mathbb H)$-gain graph $(\Gamma,\psi)^{\alpha,-1}$  of Definition \ref{def:consommanulla} with respect to $\pi_{\mathbb H}$.

By Theorem \ref{lemma-Q-pi-sommanulla}, the gain graphs $(\Gamma,\psi)$  and $(\Gamma,\psi)^{\alpha,\mathbb H}$ are $\pi_{\mathbb H}$-cospectral.
As a second step, we prove that the $\pi_{\mathbb H}$-spectrum is nothing but the spectrum of the complex adjoint matrix.

Let $A\in M_n(\mathbb H)$. We know that for each entry $a_{r,s}$ of $A$ there are two complex numbers $a_{r,s}^{1}$ and $a_{r,s}^{2}$ such that
$$
a_{r,s} = a_{r,s}^{1} + a_{r,s}^{2}j.
$$
Now, by applying Definition \ref{def:piH}, one can check that $\pi_{\mathbb H}(a_{r,s}) = \left(
                                                                                            \begin{array}{cc}
                                                                                              a_{r,s}^1 & a_{r,s}^2 \\
                                                                                              -\overline{a}_{r,s}^{2} &  \overline{a}_{r,s}^{1}  \\
                                                                                            \end{array}
                                                                                          \right)
$ and so
$$
\pi_{\mathbb H}(A) = \left(
  \begin{array}{cc|c|cc}
  a_{1,1}^{1} & a_{1,1}^{2} & \cdots  & a_{1,n}^{1} & a_{1,n}^{2} \\
-\overline{a}_{1,1}^{2} & \overline{a}_{1,1}^{1}  & \cdots & -\overline{a}_{1,n}^{2} & \overline{a}_{1,n}^{1}\\  \hline
\vdots & \vdots&   & \vdots& \vdots\\    \hline
a_{n,1}^{1} & a_{n,1}^{2} & \cdots & a_{n,n}^{1} & a_{n,n}^{2} \\ -\overline{a}_{n,1}^{2} & \overline{a}_{n,1}^{1} & \cdots & -\overline{a}_{n,n}^{2} & \overline{a}_{n,n}^{1}\\
  \end{array}
\right).
$$
Let us denote by $\tau:\{1, \ldots, 2n\} \longrightarrow \{1, \ldots, 2n\}$ the permutation defined as
$$
\tau(r) = \begin{cases} n + k \mbox{ if   } r=2k \\
    k+1 \mbox{ if   } r=2k +1.
    \end{cases}
$$
Let $e_b$ be the $2n$-vector which has $1$ in the $b$-th entry and $0$ otherwise. Let us denote by $C_\tau$ the permutation matrix of size $2n$ whose $b$-th column is $e_{\tau^{-1}(b)}$, and let $R_\tau$ be its transpose. Then, by using a shuffling approach as in \cite{shuffling}, one obtains:
$$
f(A) = R_\tau \pi_{\mathbb H}(A) C_\tau.
$$
Indeed
$$
\pi_{\mathbb H}(A) C_\tau =
\left(
  \begin{array}{cccc|cccc}
 a_{1,1}^{1} & a_{1,2}^{1} & \cdots & a_{1, n}^{1} & a_{1,1}^{2} & \cdots & a_{1,n-1}^{2} & a_{1,n}^{2}\\
    -\overline{a}_{1,1}^{2} & -\overline{a}_{1,2}^{2} & \cdots & -\overline{a}_{1, n}^{2} & \overline{a}_{1,1}^{1} & \cdots & \overline{a}_{1,n-1}^{1} & \overline{a}_{1,n}^{1}\\   \hline
    \vdots & \vdots & & \vdots & \vdots & & \vdots & \vdots \\      \hline
    a_{n,1}^{1} & a_{n,2}^{1} & \cdots & a_{n, n}^{1} & a_{n,1}^{2} & \cdots & a_{n,n-1}^{2} & a_{n,n}^{2}\\
    -\overline{a}_{n,1}^{2} & -\overline{a}_{n,2}^{2} & \cdots & -\overline{a}_{n, n}^{2} & \overline{a}_{n,1}^{1} & \cdots & \overline{a}_{n,n-1}^{1} & \overline{a}_{n,n}^{1}
  \end{array}
\right)
$$
and
$$
R_\tau \pi_{\mathbb H}(A) C_\tau =
\left(
  \begin{array}{cccc|cccc}
  a_{1,1}^{1} & a_{1,2}^{1} & \cdots & a_{1, n}^{1} & a_{1,1}^{2} & \cdots & a_{1,n-1}^{2} & a_{1,n}^{2}\\
    a_{2,1}^{1} & a_{2,2}^{1} & \cdots & a_{2, n}^{1} & a_{2,1}^{2} & \cdots & a_{2,n-1}^{2} & a_{2,n}^{2}\\
    \vdots & \vdots & &\vdots &\vdots & &\vdots & \vdots \\
    a_{n,1}^{1} & a_{n,2}^{1} & \cdots & a_{n, n}^{1} & a_{n,1}^{2} & \cdots & a_{n,n-1}^{2} & a_{n,n}^{2}\\    \hline
    -\overline{a}_{1,1}^{2} & -\overline{a}_{1,2}^{2} & \cdots & -\overline{a}_{1, n}^{2} & \overline{a}_{1,1}^{1} & \cdots & \overline{a}_{1,n-1}^{1} & \overline{a}_{1,n}^{1}\\
    \vdots & \vdots & &\vdots &\vdots & &\vdots & \vdots \\
   -\overline{a}_{n-1,1}^{2} & -\overline{a}_{n-1,2}^{2} & \cdots & -\overline{a}_{n-1, n}^{2} & \overline{a}_{n-1,1}^{1} & \cdots & \overline{a}_{n-1,n-1}^{1} & \overline{a}_{n-1,n}^{1}\\
    -\overline{a}_{n,1}^{2} & -\overline{a}_{n,2}^{2} & \cdots & -\overline{a}_{n, n}^{2} & \overline{a}_{n,1}^{1} & \cdots & \overline{a}_{n,n-1}^{1} & \overline{a}_{n,n}^{1}
  \end{array}
\right),
 $$
which is exactly the matrix $f(A)$. Notice that, since $R_\tau$ is a permutation matrix, then $R_\tau^{-1} = C_\tau$ and so $\pi_{\mathbb H}(A)$ and $f(A)$ are similar. This implies that they have the same eigenvalues with the same multiplicities. \\
\indent As a consequence,  the fact that $(\Gamma,\psi)$  and $(\Gamma,\psi)^{\alpha,\mathbb H}$ are $\pi_{\mathbb H}$-cospectral implies that the complex adjoint matrices
$f(A_{(\Gamma,\psi)})$ and $f(A_{(\Gamma,\psi)^{\alpha,\mathbb H}})$ are cospectral: by virtue of Proposition \ref{f=r} it is possible to conclude that
$(\Gamma,\psi)$  and $(\Gamma,\psi)^{\alpha,\mathbb H}$ are right cospectral.
\end{proof}

\begin{example}\label{ex:Q3}
Consider the $U(\mathbb H)$-gain graphs $(\Gamma,\psi)$  and $(\Gamma,\psi)^{\alpha,\mathbb H}$ of Examples \ref{ex:Q1} and \ref{ex:Q2}.
An explicit computation shows that the characteristic polynomials of the matrices $f(A_{(\Gamma,\psi)})$ and  $f(A_{(\Gamma,\psi)^{\alpha,\mathbb H}})$, or equivalently, the characteristic polynomials of the matrices
$\pi_{\mathbb H}(A_{(\Gamma,\psi)})$ and  $\pi_{\mathbb H}(A_{(\Gamma,\psi)^{\alpha,\mathbb H}})$, coincide and are equal to
$$
x^{16}-22x^{14}+187x^{12}-776x^{10}+1639x^8-1650x^6+625x^4
$$
and so the  $U(\mathbb H)$-gain graphs $(\Gamma,\psi)$ and $(\Gamma,\psi)^{\alpha,\mathbb H}$ are right cospectral.
\end{example}


\begin{thebibliography}{99}
\bibitem{acharya} B. D. Acharya, Spectral criterion for cycle balance in networks, \emph{J. Graph Theory} {\bf 4} (1980), no. 1, 1--11.

\bibitem{cos} F. Belardo, M. Brunetti, M. Cavaleri, A. Donno, Constructing cospectral signed graphs, \emph{Linear Multilinear Algebra} {\bf 69} (2021), no. 14, 2717--2732.

\bibitem{gm} F. Belardo, M. Brunetti, M. Cavaleri, A. Donno, Godsil-McKay switching for mixed and gain graphs over the circle group, \emph{Linear Algebra Appl.} {\bf 614} (2021), 256--269.

\bibitem{quat} F. Belardo, M. Brunetti, N. Coble, N. Reff, H. Skogman, Spectra of quaternion unit gain graphs, \emph{Linear Algebra Appl.} {\bf 632} (2022), 15--49.

\bibitem{brenner} J.L. Brenner, Matrices of quaternions, \emph{Pacific J. Math.} {\bf 1} (1951), 329--335.

\bibitem{JACO} M. Cavaleri, D. D'Angeli, A. Donno, A group representation approach to balance of gain graphs, \emph{J. Algebraic Combin.} {\bf 54} (2021), no. 1, 265--293.

\bibitem{GLine} M. Cavaleri, D. D'Angeli, A. Donno, Gain-line graphs via $G$-phases and group representations, \emph{Linear Algebra Appl.} {\bf 613} (2021),  241--270.

\bibitem{oncospe} M. Cavaleri, A. Donno, On cospectrality of gain graphs, \emph{Spec. Matrices} {\bf 10} (2022), 343--365.


\bibitem{cve0} D. M. Cvetkovi\'c, Graphs and their spectra, \emph{ Univ. Beograd. Publ. Elektrotehn. Fak. Ser. Mat. Fiz.} No. 354--356 (1971), 1--50.


\bibitem{cveapp} D. M. Cvetkovi\'c, Applications of graph spectra: an introduction to the literature, \emph{ Zb. Rad. (Beogr.)} {\bf 14(22)} (2011), Selected topics on applications of graph spectra, 9--34.


\bibitem{cve} D. M. Cvetkovi\'c, M. Doob, H. Sachs, Spectra of graphs. Theory and applications. Third edition, \emph{ Johann Ambrosius Barth, Heidelberg}, 1995, ii+447 pp.

\bibitem{deter}  E. R. van Dam, W. H. Haemers, Which graphs are determined by their spectrum? Special issue on the Combinatorial Matrix Theory Conference (Pohang, 2002), \emph{Linear Algebra Appl.} {\bf 373} (2003), 241--272.

\bibitem{deter2}  E. R. van Dam, W. H. Haemers, Developments on spectral characterizations of graphs, \emph{Discrete Math.} {\bf 309} (2009), no.~3,  576--586.

\bibitem{shuffling} D. D'Angeli, A. Donno, Shuffling matrices, Kronecker product and Discrete Fourier Transform, \emph{Discrete Appl. Math.} {\bf 233} (2017), 1--18.

\bibitem{Spec} D. R. Farenick, B. A.F. Pidkowich, The spectral theorem in quaternions, \emph{Linear Algebra
Appl.} {\bf 371} (2003) 75--102

\bibitem{fulton} W. Fulton, J. Harris, Representation theory. A first course. \emph{Graduate Texts in Mathematics} {\bf 129}. Readings in Mathematics. \emph{Springer-Verlag, New York, 1991}. xvi+551 pp.

\bibitem{GoMc} C. D. Godsil, B. D. McKay, Constructing cospectral graphs, \emph{Aequationes Math.} {\bf 25} (1982), no.~2-3, 257--268.

\bibitem{gp} H. H. G\"unthard, H. Primas, Zusammenhang von Graphentheorie und MO-Theorie von Molekeln mit Systemen konjugierter Bindungen, \emph{Helv. Chim. Acta}  {\bf 39} (1956), 1645--1653.

\bibitem{Harary} F. Harary, On the notion of balance of a signed graph, \emph{Michigan Math. J.} {\bf 2} (1953--1954), 143--146 (1955).

\bibitem{Lee} H.C. Lee, Eigenvalues and canonical forms of matrices with quaternion coefficients, \emph{Proc. Roy. Irish Acad.}, Sect. A {\bf 52} (1949), 253--260

\bibitem{adun} R. Mehatari, M. R. Kannan,  A. Samanta, On the adjacency matrix of a complex unit gain graph, \emph{Linear Multilinear Algebra} {\bf 70} (2022), no. 9, 1798--1813

\bibitem{refft} N. Reff, Gain graphs, group-oriented hypergraphs, and matrices. Thesis (Ph.D.)-State University of New York at Binghamton, 2012, 114 pp.  \emph{ProQuest LLC}.

\bibitem{reff1} N. Reff, Spectral properties of complex unit gain graphs, \emph{Linear Algebra Appl.} {\bf 436} (2012), no. 9, 3165--3176.



\bibitem{tree} A. J. Schwenk, Almost all trees are cospectral, \emph{New directions in the theory of graphs (Proc. Third Ann Arbor Conf., Univ. Michigan, Ann Arbor, Mich., 1971)}, pp. 275--307. \emph{Academic Press, New York,} 1973.

\bibitem{zasgraph} T. Zaslavsky,  Characterizations of signed graphs, \emph{J. Graph Theory} {\bf 5} (1981), no. 4, 401--406.

\bibitem{zasign} T. Zaslavsky, Signed graphs, \emph{Discrete Appl. Math.} {\bf 4} (1982), no. 1, 47--74.

\bibitem{zaslavsky1} T. Zaslavsky, Biased graphs. I. Bias, balance, and gains, \emph{J. Combin. Theory Ser. B} {\bf 47} (1989), no. 1, 32--52.

\bibitem{zasmat} T. Zaslavsky, Matrices in the theory of signed simple graphs, \emph{Advances in discrete mathematics and applications: Mysore, 2008, 207--229, Ramanujan Math. Soc. Lect. Notes Ser. {\bf 13}, Ramanujan Math. Soc., Mysore, 2010}.

\bibitem{zasbib} T. Zaslavsky, A mathematical bibliography of signed and gain graphs and allied areas, \emph{Electron. J. Combin.}, Dynamic Surveys No. DS8: Dec 21, 2018 (electronic).

\bibitem{zhang} F. Zhang, Quaternions and matrices of quaternions, \emph{Linear Algebra Appl.} {\bf 251} (1997), 21--57.

\end{thebibliography}
\end{document}